\documentclass{article}

\usepackage[english]{babel}
\usepackage[letterpaper,top=2cm,bottom=2cm,left=3cm,right=3cm,marginparwidth=1.75cm]{geometry}
\usepackage{indentfirst}

\usepackage{graphicx}
\usepackage[colorlinks=true, allcolors=blue]{hyperref}
\usepackage{amssymb,amsmath,amsthm,amsfonts,xcolor,enumerate,hyperref,comment,longtable,cleveref,amscd}

\newtheorem{theorem}{Theorem}
\newtheorem{lemma}[theorem]{Lemma}
\newtheorem{proposition}[theorem]{Proposition}

\newtheorem{definition}[theorem]{Definition}
\newtheorem{corollary}[theorem]{Corollary}

\title{}
\author{}
\date{}

\begin{document}
\maketitle

\begin{Huge}
    \begin{center}
    Transposed $\delta$-Poisson algebra structures on null-filiform associative algebras
\end{center}
\end{Huge}

\begin{center}

 {\bf  Nigora Daukeyeva\footnote{Chirchik State Pedagogical University, Chirchik, Uzbekistan; \  
 daukeyeva.cspu@gmail.com},  
Maqpal Eraliyeva\footnote{
Chirchik State Pedagogical University, Chirchik, Uzbekistan; \  eraliyevamaqpal@gmail.com},
Feruza Toshtemirova\footnote{Chirchik State Pedagogical University, Chirchik, Uzbekistan; \ feruzaisrailova45@gmail.com}
}

\end{center}

\begin{abstract}
In this paper, we consider transposed $\delta$-Poisson algebras, which are a generalization of transposed $\delta$-Poisson algebras. In particular, we describe all transposed $\delta$-Poisson algebras of associative null-filiform algebras. It can be seen that these algebras are characterized by the roots of the polynomial $\delta^3 - 3\delta^2 + 2\delta$. A complete classification of transposed $\delta$-Poisson algebras corresponding to each value of the parameter $\delta$ is provided. Furthermore, we construct all $\delta$-Poisson algebra structures on null-filiform associative algebras, and show that they are trivial $\delta$-Poisson algebras.
\end{abstract}

\noindent {\bf Keywords}:
{\it   algebra of polynomials, Lie algebra, Poisson algebra, transposed Poisson algebra.}

\bigskip
\noindent {\bf MSC2020}:  17A30, 17A50, 17B63.

\section{Introduction}
Poisson algebras originated in the study of Poisson geometry during the 1970s and have since appeared in a wide range of mathematical and physical disciplines, including Poisson manifolds, algebraic geometry, operads, quantization theory, quantum groups, and both classical and quantum mechanics. More recently, the notion of transposed Poisson algebras has been introduced in \cite{Bai}, providing a dual perspective on Poisson structures and leading to novel algebraic frameworks. This concept has found applications in various algebraic structures, including Novikov-Poisson algebras and $3$-Lie algebras \cite{bfk22}. In \cite{jawo, kk21, Kh25, YYZ07}, Poisson structures on canonical algebras and the finitary incidence algebras of an arbitrary poset over a commutative unital ring are described. In \cite{fm25}, authors considered a new family of modified double Poisson brackets and mixed double Poisson algebras. 

In the paper \cite{AKS}, two new types of $\delta$-Poisson and transposed $\delta$-Poisson algebras were studied. The $\delta$-Poisson algebras emerged as a generalization of both Poisson and anti-Poisson algebras. On the other hand, they are closely related to $\delta$-derivations introduced by Filippov in \cite{fil98} (about $\delta$-derivations see \cite{k23,zz}). { It was shown in that paper that transposed $\delta$-Poisson algebras share many known similarities with those studied in \cite{Bai}.} However, unlike transposed Poisson algebras, transposed anti-Poisson algebras possess simple algebras in the complex finite-dimensional case. Furthermore, it was proven that the tensor product of two $\delta$-Poisson (respectively, transposed $\delta$-Poisson) algebras is again a $\delta$-Poisson (respectively, transposed $\delta$-Poisson) algebra.

{Similarly, in the paper \cite{k25} considered the structure and properties of $\delta$-Novikov and $\delta$-Novikov-Poisson algebras, a generalization of Novikov and $\delta$-Novikov-Poisson algebras characterized by a scalar parameter $\delta$.} From the result of this article, we have a crucial difference between Novikov and anti-Novikov algebras. Namely, unlike Novikov algebras, anti-Novikov algebras have complex non-commutative simple finite-dimensional algebras. Moreover, the authors of the article gave some constrictions for obtaining examples of $\delta$-Novikov algebras and proved that the Kantor product of two multiplications of a $\delta$-Novikov-Poisson algebra gives a $\delta$-Novikov algebra. It was shown that the tensor product of two $\delta$-Novikov-Poisson algebras admits a structure of a new $\delta$-Novikov-Poisson algebra under the standard multiplications. Also, they gave relations between $\delta$-derivations, (transposed) $\delta$-Poisson algebras, and $\delta$-Novikov-Poisson algebras. Methods for construction of (transposed) $\delta$-Poisson algebras from a commutative associative algebra with a nonzero $\delta$-derivation are presented. They proved that a $\delta$-Novikov–Poisson algebra under the commutator product gives a transposed $(\delta+1)$-Poisson algebra and they introduced the notion of $\delta$-Gelfand–Dorfman algebras and proved that commutative $\delta$-Gelfand–Dorfman algebras give transposed $(\delta+1)$-Poisson algebras.

Research on transposed Poisson structures has gained significant traction in recent years. In \cite{bfk23}, a comprehensive algebraic and geometric classification of transposed Poisson algebras was provided, further expanding the understanding of their structural properties.  Various works have explored transposed Poisson structures on different classes of Lie algebras. For instance, \cite{kk22, kk23, kkg23} examined such structures on Block Lie algebras and superalgebras, and Witt-type algebras, revealing intricate interactions between transposed Poisson and classical Poisson structures. The study of transposed Poisson structures on low-dimensional Lie algebras was advanced in \cite{ADSS}, where authors investigated the transposed Poisson structures on low dimensional quasi-filiform Lie algebras of maximum length.

Further progress has been made in the classification of transposed Poisson structures on specific algebraic frameworks. In \cite{KK7}, these structures were investigated in the context of upper triangular matrix Lie algebras. Additionally, the application of transposed Poisson structures to incidence algebras was explored in \cite{kkinc}, extending their relevance to combinatorial algebra. A different perspective was provided in \cite{FKL}, where the connection between transposed Poisson algebras and $\frac{1}{2}$-derivations of Lie algebras was analyzed, uncovering new insights into derivation-based algebraic properties.

In \cite{KKhZ, KKhZ1}, transposed Poisson structures were examined in the context of not-finitely graded Witt-type algebras and Virasoro-type algebras, emphasizing their relevance in theoretical physics and representation theory. Using the connection between $\frac{1}{2}$-derivations of Lie algebras and transposed Poisson algebras, descriptions of all transposed Poisson structures on some types of Lie algebras were obtained \cite{AAE, KKh, klv22, lb23, kms, yh21}.

The development of transposed Poisson structures continues to be an active area of research, with recent contributions exploring their connections to Jordan superalgebras \cite{fer23}, and Schr\"{o}dinger algebras \cite{ytk}. Additionally, applications to classification problems in non-associative algebras have been addressed in \cite{k23}, further integrating transposed Poisson structures into the broader landscape of algebraic structures.

This paper provides a complete classification of all transposed $\delta$-Poisson algebra structures on null-filiform associative algebras. All Poisson algebra structures on null-filiform associative algebras were constructed in \cite{AFM}. In \cite{ABT}, authors investigated classifications of all transposed Poisson algebras of the associated associative null-filiform algebra. In Section 2, we introduce the necessary definitions and results that form the basis of our study. Using these foundations, we describe all such structures in Section 3.

\section{PRELIMINARIES}

In this section, we introduce the relevant concepts and known results. Unless stated otherwise, all algebras considered here are over the field $\mathbb{C}$.

We first recall the definition of a Poisson algebra.

\begin{definition}[\cite{Bai}]
Let \(\mathfrak{L}\) be a vector space equipped with two bilinear operations
\[
\cdot, \; [-,-] : \mathfrak{L} \otimes \mathfrak{L} \to \mathfrak{L},
\]
where $(\mathfrak{L}, \cdot)$ is a commutative associative algebra and $(\mathfrak{L}, [-,-])$ is a Lie algebra.

The triple \((\mathfrak{L}, \cdot, [-,-])\) is called a \textbf{$\delta$-Poisson algebra} if:
    \begin{equation}\label{eq:LR}
    [x, y \cdot z] = \delta([x, y] \cdot z + y \cdot [x, z]), \quad \text{for all } x,y,z \in \mathfrak{L}.
    \end{equation}

The triple $(\mathfrak{L},\cdot,[-,-])$ is called a \textbf{transposed $\delta$-Poisson algebra} if:
\begin{equation}
\delta z\cdot [x,y]=[z\cdot x,y]+[x,z\cdot y].\label{eq:dualp}
\end{equation}
\end{definition}
If we take $\delta=1$ and $\delta=2$ respectively in identities (1) and (2), then we obtain the definitions of the Poisson and the transposed Poisson algebras, respectively.

A (transposed) $\delta$-Poisson algebra $\mathfrak{L}$ is called \textit{trivial}, if $\mathfrak{L}\cdot \mathfrak{L}=0$ or $[\mathfrak{L}, \mathfrak{L}]=0$.

Similar to the results in \cite{Bai}, the following result shows that the compatibility relations of the $\delta$-Poisson algebra and the transposed $\delta$-Poisson algebra are independent in the following sense.

\begin{proposition} Let $(\mathfrak{L},\cdot)$ be a commutative associative algebra and $(\mathfrak{L},[-,-])$ be a Lie algebra. Then $(\mathfrak{L},\cdot,[-,-])$ is both a $\delta$-Poisson algebra and a transposed $\delta$-Poisson algebra if and only if
$$x\cdot [y,z]=[x\cdot y,z]=0.$$
\end{proposition}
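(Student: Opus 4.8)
The plan is to prove the backward implication by direct substitution and the forward implication by combining the two compatibility identities \eqref{eq:LR} and \eqref{eq:dualp} repeatedly until the product $z\cdot[x,y]$ is isolated.

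For the easy direction, assume $x\cdot[y,z]=0$ and $[x\cdot y,z]=0$ for all $x,y,z$. Then in \eqref{eq:LR} the left-hand side $[x,y\cdot z]=-[y\cdot z,x]$ vanishes because it is a bracket of a product, while each term on the right, namely $[x,y]\cdot z=z\cdot[x,y]$ and $y\cdot[x,z]$, vanishes because it is a product of a bracket; hence \eqref{eq:LR} holds. The very same two observations make both $[z\cdot x,y]$ and $[x,z\cdot y]$, as well as $\delta z\cdot[x,y]$, vanish, so \eqref{eq:dualp} holds too. This direction needs no restriction on $\delta$.

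For the forward direction, assume both \eqref{eq:LR} and \eqref{eq:dualp}. First I would use \eqref{eq:LR} to expand the two brackets $[z\cdot x,y]$ and $[x,z\cdot y]$ occurring in \eqref{eq:dualp}; substituting these expansions back into \eqref{eq:dualp} and simplifying with commutativity of $\cdot$ and antisymmetry of $[-,-]$ yields a first relation
\[
\delta\bigl(-x\cdot[y,z]+y\cdot[x,z]+z\cdot[x,y]\bigr)=0.
\]
Next I would compute $[x\cdot y,z]$ in two ways: directly from \eqref{eq:LR} one gets $[x\cdot y,z]=\delta\bigl(x\cdot[y,z]+y\cdot[x,z]\bigr)$, while feeding \eqref{eq:LR} into \eqref{eq:dualp} collapses it to $[x\cdot y,z]=\delta\,z\cdot[x,y]$. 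Equating these two expressions gives a second relation $\delta\bigl(x\cdot[y,z]+y\cdot[x,z]-z\cdot[x,y]\bigr)=0$.

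Adding the two relations cancels the mixed terms and leaves $2\delta\,y\cdot[x,z]=0$; since we work over $\mathbb{C}$ this forces $x\cdot[y,z]=0$ whenever $\delta\neq0$, and then $[x\cdot y,z]=\delta\,z\cdot[x,y]=0$ follows from the collapsed identity above. I expect the main obstacle to be purely organizational, namely keeping the sign bookkeeping straight through the repeated use of antisymmetry and commutativity so that the two relations genuinely differ only in the sign of $z\cdot[x,y]$. The one genuine caveat is that the argument divides by $2\delta$: for $\delta=0$ the identity \eqref{eq:LR} already reads $[x,y\cdot z]=0$ and \eqref{eq:dualp} becomes a formal consequence of it, so the two conditions then force only $[x\cdot y,z]=0$, and the stated equivalence should be understood under the assumption $\delta\neq0$.
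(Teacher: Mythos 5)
Your argument is correct for $\delta\neq 0$, and its skeleton matches the paper's: both proofs push \eqref{eq:LR} through \eqref{eq:dualp} to obtain a linear relation among the three products $x\cdot[y,z]$, $y\cdot[x,z]$, $z\cdot[x,y]$, then pair it with a second such relation, add, and divide by $2\delta$. The difference is where the second relation comes from. The paper imports the cyclic identity $x\cdot[y,z]+y\cdot[z,x]+z\cdot[x,y]=0$ (its Eq.~\eqref{eqdTPA}) from \cite{AKS} as a known consequence of the transposed $\delta$-Poisson axiom, and combines it with the single expanded relation; you instead manufacture the second relation internally, by evaluating $[x\cdot y,z]$ twice --- once directly from \eqref{eq:LR}, and once via the collapse $[x\cdot y,z]=\delta\, x\cdot[y,z]-[y,x\cdot z]=\delta\, z\cdot[x,y]$, which I checked is valid. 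This makes your proof self-contained (no appeal to the identities of \cite{AKS}) at the cost of one extra computation; both routes end with $2\delta\, y\cdot[x,z]=0$ and then $[x\cdot y,z]=\delta\, z\cdot[x,y]=0$, and your easy direction is the same trivial verification as everywhere.

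Your caveat about $\delta=0$ is not only legitimate but sharper than the paper's own treatment. The paper opens with ``it is easy to see that the statement holds when $\delta=0$,'' but the forward implication is in fact false there: for $\delta=0$ the axioms read $[x,y\cdot z]=0$ and $[z\cdot x,y]+[x,z\cdot y]=0$, the second being, as you note, a formal consequence of the first, and together they force only $[x\cdot y,z]=0$, not $x\cdot[y,z]=0$. A concrete witness: on $\mathrm{span}\{e_1,e_2,e_3\}$ take $e_1\cdot e_1=e_2$ (all other products zero) and $[e_1,e_3]=e_1$ (all other brackets zero); this is a commutative associative algebra and a Lie algebra satisfying both $0$-Poisson axioms, yet $e_1\cdot[e_1,e_3]=e_2\neq 0$. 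So the equivalence must indeed be read with $\delta\neq 0$, exactly as you say, while the backward implication holds for every $\delta$.
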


\begin{proof}  It is easy to see that the statement holds when $\delta = 0$. Therefore, we assume that $\delta \neq 0$. Let $(\mathfrak{L},\cdot,[-,-])$ be a $\delta$-Poisson and transposed $\delta$-Poisson algebra, then according to \cite{AKS}, it satisfies the following identities, respectively:
\begin{equation}\label{eqdPA}
    [x, y \cdot z]+[y, z\cdot x]+ [z, x\cdot y]=0,
    \end{equation}
\begin{equation}\label{eqdTPA}
    x\cdot [y, z]+y\cdot [z, x]+ z\cdot [x,y]=0.
    \end{equation}
for all $x,y,z \in \mathfrak{L}$

It is easy to see, that

    \begin{longtable}{lclcccc}
    $0$ &$=$ &
    $\delta z\cdot[x,y]+ [y,x\cdot z]-[x,y\cdot z] \ \overset{(\hyperlink{eq:LR}{\ref{eq:LR}})}{ =} $\\
    && $\delta\Big(z\cdot[x,y]+[y,x]\cdot z+x\cdot [y,z]-[x,y]\cdot z - y\cdot [x, z]\Big) =$\\
    && $\delta\Big(x\cdot[y,z]+y\cdot[z,x]-z\cdot[x,y]\Big). $ &\\
    \end{longtable}
    
Then by Eq. (\ref{eqdTPA}), we have $z\cdot[x,y]=0.$ By Eq. (\ref{eq:LR}) again, we have $[x,y\cdot z]=0.$
\end{proof}

For an algebra $\mathcal{A}$, we consider the series
\[
\mathcal{A}^1=\mathcal{A}, \quad \ \mathcal{A}^{i+1}=\sum\limits_{k=1}^{i}\mathcal{A}^k \mathcal{A}^{i+1-k}, \quad i\geq 1.
\]

We say that  an  algebra $\mathcal{A}$ is \emph{nilpotent} if $\mathcal{A}^{i}=0$ for some $i \in \mathbb{N}$. The smallest integer $i$ satisfying $\mathcal{A}^{i}=0$ is called the  \emph{index of nilpotency} of $\mathcal{A}$.

\begin{definition}
An $n$-dimensional algebra $\mathcal{A}$ is called null-filiform if $\dim \mathcal{A}^i=(n+ 1)-i,\ 1\leq i\leq n+1$.
\end{definition}

All null-filiform associative algebras were described in  \cite[Proposition 5.3]{MO}.

\begin{theorem}[\cite{MO}] An arbitrary $n$-dimensional null-filiform associative algebra is isomorphic to the algebra:
\[\mu_0^n : \quad e_i \cdot e_j= e_{i+j}, \quad 2\leq i+j\leq n,\]
where $\{e_1, e_2, \dots, e_n\}$ is a basis of the algebra $\mu_0^n$ and the omitted products vanish.
\end{theorem}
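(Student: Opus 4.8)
The plan is to exploit the sharp dimension condition $\dim \mathcal{A}^i = (n+1)-i$, which forces the descending chain $\mathcal{A} = \mathcal{A}^1 \supsetneq \mathcal{A}^2 \supsetneq \cdots \supsetneq \mathcal{A}^n \supsetneq \mathcal{A}^{n+1} = 0$ to drop in dimension by exactly one at each step, so that every successive quotient $\mathcal{A}^i/\mathcal{A}^{i+1}$ is one-dimensional. The algebra is thus ``as close to cyclic as possible'', and the strategy is to show that it is in fact generated by a single element whose successive powers provide the basis $\{e_i\}$.

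First I would fix any $e_1 \in \mathcal{A} \setminus \mathcal{A}^2$; such an element exists since $\dim \mathcal{A}/\mathcal{A}^2 = 1$, and its image spans this quotient. Writing $e_1^i$ for the $i$-fold product (unambiguous by associativity), the central step is to prove by induction on $i$ that $e_1^i$ spans $\mathcal{A}^i/\mathcal{A}^{i+1}$. The key observation is that any element of $\mathcal{A}$ can be written as $\lambda e_1 + a$ with $\lambda \in \mathbb{C}$ and $a \in \mathcal{A}^2$, and that $\mathcal{A}^i$ is spanned by products of $i$ elements of $\mathcal{A}$; when such a product of $i$ factors is expanded, every term in which at least one factor is taken from $\mathcal{A}^2$ lands in $\mathcal{A}^{i+1}$, so that modulo $\mathcal{A}^{i+1}$ each product reduces to a scalar multiple of $e_1^i$. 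Since $\dim \mathcal{A}^i/\mathcal{A}^{i+1} = 1$, this image must be nonzero, which yields simultaneously $e_1^i \notin \mathcal{A}^{i+1}$ and that $e_1^i$ spans the quotient.

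Having shown that $e_1^i$ generates each $\mathcal{A}^i/\mathcal{A}^{i+1}$, I would deduce that $\{e_1, e_1^2, \dots, e_1^n\}$ is a basis of $\mathcal{A}$: these $n$ elements project to bases of the one-dimensional graded pieces, and a standard filtration argument (looking at the lowest-index term of a hypothetical dependence relation) lifts this to linear independence, hence to a basis by the dimension count. Setting $e_i := e_1^i$, associativity immediately gives $e_i \cdot e_j = e_1^{i+j}$, which equals $e_{i+j}$ when $i+j \le n$ and lies in $\mathcal{A}^{n+1} = 0$ when $i+j > n$. The assignment $e_i \mapsto e_1^i$ is therefore a linear isomorphism intertwining the two multiplications, establishing the desired isomorphism with $\mu_0^n$.

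The main obstacle is the central inductive step, namely verifying that the powers of the single element $e_1$ do not vanish prematurely, i.e. that $e_1^i \neq 0$ for all $i \le n$. This is precisely where the full strength of the null-filiform hypothesis $\dim \mathcal{A}^i = (n+1)-i$ is required: without the exact one-dimensional drop at every level of the filtration, a product of $i$ elements could represent the quotient $\mathcal{A}^i/\mathcal{A}^{i+1}$ through some combination other than $e_1^i$, and one could not conclude that a single generator suffices to recover the whole algebra.
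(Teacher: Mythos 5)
Your proof is correct and complete: the single-generator argument (pick $e_1\in\mathcal{A}\setminus\mathcal{A}^2$, show by induction using $\mathcal{A}^j\mathcal{A}^k\subseteq\mathcal{A}^{j+k}$ that $e_1^i$ spans the one-dimensional quotient $\mathcal{A}^i/\mathcal{A}^{i+1}$, then lift to a basis by the filtration argument) is essentially the standard proof of this result, which the paper itself does not reprove but imports from \cite{MO}. Your closing inductive step, including the verification that $e_1^i\notin\mathcal{A}^{i+1}$ for $i\leq n$ and that all products $e_1^ie_1^j$ with $i+j>n$ vanish via $\mathcal{A}^{n+1}=0$, matches the approach of the cited source, so there is nothing to flag.
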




\begin{theorem}[\cite{aku}]  \label{3.1} A linear map $\varphi:\mu_0^n\to \mu_0^n $ is an automorphism of the algebra $\mu_0^n$ if and only if the map $\varphi$ has the
following form:
\[
\varphi(e_1)=\sum\limits_{i=1}^nA_ie_i,
\quad
\varphi(e_i)=\sum\limits_{j=i}^n \sum\limits_{k_1 +k_2 +... +k_i=j} A_{k_1} A_{k_2}\dots A_{k_i}e_j, \quad 2\leq i\leq n,
\]
where $A_1\neq0$.
\end{theorem}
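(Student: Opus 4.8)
The plan is to prove that a linear map $\varphi:\mu_0^n\to\mu_0^n$ of the stated form is an automorphism, and conversely that every automorphism has this form. I would first verify the multiplicativity condition $\varphi(e_i\cdot e_j)=\varphi(e_i)\cdot\varphi(e_j)$ for the prescribed map, and then show that the condition $A_1\neq 0$ is exactly what makes $\varphi$ invertible (hence an automorphism).

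For the forward direction, I would begin by observing that the algebra $\mu_0^n$ is generated by the single element $e_1$, since $e_i=e_1^{\,i}$ (the $i$-fold product) for $2\le i\le n$, while $e_1^{\,i}=0$ for $i>n$. Consequently any algebra homomorphism is completely determined by the image $\varphi(e_1)=\sum_{i=1}^n A_i e_i$, and the images of the higher basis vectors are forced by $\varphi(e_i)=\varphi(e_1)^{\,i}$. The key computation is then to expand $\varphi(e_1)^{\,i}=\bigl(\sum_{k=1}^n A_k e_k\bigr)^{\,i}$ using the multiplication rule $e_a\cdot e_b=e_{a+b}$ (with the convention $e_m=0$ for $m>n$). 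Expanding the product gives a sum over all tuples $(k_1,\dots,k_i)$ of the term $A_{k_1}\cdots A_{k_i}\,e_{k_1+\cdots+k_i}$; collecting terms by the value $j=k_1+\cdots+k_i$ of the index and discarding those with $j>n$ yields precisely the stated closed form $\varphi(e_i)=\sum_{j=i}^n\sum_{k_1+\cdots+k_i=j}A_{k_1}\cdots A_{k_i}\,e_j$. The lower limit $j=i$ arises because each $k_\ell\ge 1$, so the smallest attainable sum is $i$.

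Next I would verify that this genuinely defines an algebra endomorphism, i.e.\ that $\varphi(e_i\cdot e_j)=\varphi(e_i)\cdot\varphi(e_j)$ holds on the nose. Since $e_i\cdot e_j=e_{i+j}$, this reduces to the identity $\varphi(e_1)^{\,i+j}=\varphi(e_1)^{\,i}\cdot\varphi(e_1)^{\,j}$, which is immediate from associativity of the product $\cdot$; so once the map is defined as powers of $\varphi(e_1)$, multiplicativity is automatic and requires no separate combinatorial check.

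Finally I would address invertibility, which is where the condition $A_1\neq 0$ enters. Ordering the basis $e_1,\dots,e_n$, the matrix of $\varphi$ is upper triangular in the sense that $\varphi(e_i)$ is a combination of $e_j$ with $j\ge i$; moreover the diagonal entry, i.e.\ the coefficient of $e_i$ in $\varphi(e_i)$, is obtained from the tuple $(1,1,\dots,1)$ and equals $A_1^{\,i}$. Hence $\det\varphi=\prod_{i=1}^n A_1^{\,i}=A_1^{\,n(n+1)/2}$, so $\varphi$ is invertible if and only if $A_1\neq 0$. For the converse direction, an arbitrary automorphism $\varphi$ sends the generator $e_1$ to some $\sum A_i e_i$, must preserve the algebra, and therefore satisfies $\varphi(e_i)=\varphi(e_1)^{\,i}$, giving the stated form; invertibility then forces $A_1\neq 0$ by the determinant computation. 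I expect the main obstacle to be the bookkeeping in the expansion of $\varphi(e_1)^{\,i}$ — correctly tracking which index tuples survive the truncation $e_m=0$ for $m>n$ and confirming the summation ranges — but this is routine once the generation-by-$e_1$ observation is in place.
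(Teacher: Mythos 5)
Your proof is correct, but note that the paper itself contains no proof to compare against: Theorem \ref{3.1} is quoted as a known result from \cite{aku}, so your argument stands on its own. It is the natural one: $\mu_0^n$ is generated by $e_1$ (with $e_i=e_1^{\,i}$), so any endomorphism satisfies $\varphi(e_i)=\varphi(e_1)^{\,i}$, the expansion of $\bigl(\sum_k A_k e_k\bigr)^{\,i}$ over index tuples gives exactly the stated closed form, and the triangular shape of the matrix with diagonal entries $A_1^{\,i}$ yields $\det\varphi=A_1^{\,n(n+1)/2}$, so invertibility is equivalent to $A_1\neq 0$. One small point you gloss over: multiplicativity must also be checked for pairs with $i+j>n$, where $e_i\cdot e_j=0$ rather than $e_{i+j}$; this is harmless, since every term of $\varphi(e_1)^{\,i+j}$ involves $e_m$ with $m=k_1+\cdots+k_{i+j}\geq i+j>n$ and hence vanishes under your truncation convention, but the case should be stated explicitly since the defining relation $e_i\cdot e_j=e_{i+j}$ holds only for $2\leq i+j\leq n$. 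With that sentence added, the argument is complete.
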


In the paper \cite{ABT}, all transposed Poisson algebra structures on null-filiform associative algebras were completely classified.

\begin{theorem} Let $(\mu_0^n, \cdot, [-,-])$ be a transposed Poisson algebra defined on the associative  algebra $(\mu_0^n, \cdot)$. Then the multiplication of $(\mu_0^n, \cdot,[-,-])$ has the following form:

$$\bf{TP}_2(\alpha_2,\dots,  \alpha_n):
\left\{\begin{array}{ll}
e_i\cdot e_j=e_{i+j}, &  2\leq i+j \leq n, \\[1mm]
[e_i,e_j]=(j-i)\sum\limits_{t=i+j-1}^{n}\alpha_{t-i-j+3}e_t, &  3\leq i+j \leq n+1.\\[1mm]
\end{array}\right.$$
\end{theorem}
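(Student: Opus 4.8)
The plan is to exploit that $(\mu_0^n,\cdot)$ is generated by $e_1$: one has $e_i=\underbrace{e_1\cdots e_1}_{i}$ and $e_1\cdot e_t=e_{t+1}$, with the convention $e_s=0$ for $s>n$. Writing the defining identity \eqref{eq:dualp} of a transposed Poisson algebra (i.e. $\delta=2$) on basis vectors gives, for all $i,j,k$,
\[
2\,e_k\cdot[e_i,e_j]=[e_{k+i},e_j]+[e_i,e_{k+j}],
\]
with a bracket read as $0$ whenever an index exceeds $n$. Specialising $k=1$ yields the first-order recursion
\[
[e_{i+1},e_j]+[e_i,e_{j+1}]=2\,e_1\cdot[e_i,e_j].
\]
My aim is to show that every solution has the homogeneous form $[e_i,e_j]=(j-i)\,F_{i+j}$, where $F_3:=[e_1,e_2]$ and $F_{m}:=e_1\cdot F_{m-1}$, so that the whole bracket is governed by the single element $[e_1,e_2]$; note that then $e_k\cdot F_m=F_{m+k}$.

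First I would run an induction on the total degree $m=i+j$, the base $m=3$ being the definition of $F_3$ (together with $[e_i,e_i]=0$). For the step, substituting the degree-$m$ form into the $k=1$ recursion shows that the degree-$(m+1)$ brackets $B_r:=[e_r,e_{m+1-r}]$ satisfy $B_{i+1}+B_i=2(m-2i)F_{m+1}$, whose general solution is $B_r=(m+1-2r)F_{m+1}+(-1)^{r}C$ for one undetermined element $C\in\mu_0^n$. Removing this spurious homogeneous term is the heart of the matter, and the step I expect to be the main obstacle, since it is exactly where the first-order recursion fails to be rigid. When $m+1$ is even the antisymmetry relations $B_r=-B_{m+1-r}$ already force $C=0$; when $m+1$ is odd they give no information, and here I would feed in the \emph{second-order} instance $k=2$ of the identity, $2\,e_2\cdot[e_i,e_j]=[e_{i+2},e_j]+[e_i,e_{j+2}]$. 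Since $e_2\cdot F_{m-1}=F_{m+1}$, the inductive form at degree $m-1$ turns this into $B_{i+2}+B_i=2(m-1-2i)F_{m+1}$, and comparison with the expression above (e.g. at $i=1$) gives $2C=0$. This completes the factorisation on the interior range $i+j\le n+1$, where no out-of-range indices intervene.

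Next I would show that $[e_1,e_2]=\sum_{t=1}^{n}\alpha_t e_t$ has vanishing $e_1$-component. Evaluating the identity at $(x,y,z)=(e_1,e_2,e_{n-1})$ gives $[e_n,e_2]=2\,e_{n-1}\cdot[e_1,e_2]=2\alpha_1 e_n$, while evaluating it at $(e_n,e_1,e_1)$ and using the already-established value $[e_1,e_n]=(n-1)(\alpha_1 e_{n-1}+\alpha_2 e_n)$ gives $[e_n,e_2]=-2(n-1)\alpha_1 e_n$; together these force $2n\alpha_1=0$, hence $\alpha_1=0$. Consequently $F_m=0$ for $m\ge n+2$, and a short upward induction on the degree (via the $k=1$ recursion, whose right-hand side now vanishes because $e_1\cdot F_{n+1}=F_{n+2}=0$, with boundary value $[e_1,e_{n+1}]=0$) shows $[e_i,e_j]=0$ whenever $i+j\ge n+2$. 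Expanding $F_{i+j}=e_1^{\,i+j-3}\cdot[e_1,e_2]=\sum_{t=i+j-1}^{n}\alpha_{t-i-j+3}e_t$ then yields exactly $[e_i,e_j]=(j-i)\sum_{t=i+j-1}^{n}\alpha_{t-i-j+3}e_t$, and the range $3\le i+j\le n+1$ is precisely where this can be nonzero.

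Finally, the statement as phrased requires only the forward direction, and it is worth stressing that the Lie and associative axioms belong to the hypothesis and were never used above: the form was extracted from antisymmetry together with instances of the compatibility identity alone. To confirm that the list $\mathbf{TP}_2(\alpha_2,\dots,\alpha_n)$ is genuinely a complete family, I would close by checking sufficiency: using $e_k\cdot F_m=F_{m+k}$ the compatibility identity collapses to $2(j-i)F_{i+j+k}=\big((j-k-i)+(k+j-i)\big)F_{i+j+k}$, which holds identically (the boundary cases being consistent precisely because $\alpha_1=0$ makes the high $F_m$ vanish), and the Jacobi identity follows by a direct degree-homogeneous computation.
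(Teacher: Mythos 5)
Your proposal is correct, and it verifies: the general solution $B_r=(m+1-2r)F_{m+1}+(-1)^{r}C$ of the $k=1$ recursion is right, antisymmetry $B_r=-B_{m+1-r}$ does kill $C$ exactly when $m+1$ is even, the $k=2$ instance $B_{i+2}+B_i=2(m-1-2i)F_{m+1}$ forces $2(-1)^iC=0$ in the odd case, and the boundary evaluations at $(e_1,e_2,e_{n-1})$ and $(e_n,e_1,e_1)$ give $[e_n,e_2]=2\alpha_1e_n=-2(n-1)\alpha_1e_n$, hence $\alpha_1=0$. But your route differs genuinely from the one this paper uses (note the statement itself is quoted here from the reference [ABT]; the paper proves its own analogues for $\delta=0$, $\delta=1$ and $\delta\neq0,1,2$ by the same mechanism, so that is the natural comparison). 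The paper evaluates the identity on the two triples $\{e_1,e_1,e_i\}$ and $\{e_{i-1},e_1,e_2\}$ and \emph{adds} them, so that $[e_2,e_i]+[e_i,e_2]$ cancels by antisymmetry and one gets the clean recurrence $[e_1,e_{i+1}]=\tfrac{\delta}{2}\bigl(e_1\cdot[e_1,e_i]+e_{i-1}\cdot[e_1,e_2]\bigr)$; this sidesteps entirely the alternating ambiguity $(-1)^rC$ that is the crux of your argument, but it only determines the brackets $[e_1,e_i]$, after which the remaining $[e_i,e_j]$ are obtained by a separate induction, and $\alpha_1=0$ is extracted from the \emph{Jacobi} identity on $\{e_1,e_2,e_n\}$. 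Your degree-homogeneous ansatz $[e_i,e_j]=(j-i)F_{i+j}$ treats all brackets of a given degree simultaneously, and — the real dividend — derives $\alpha_1=0$ from the compatibility identity and antisymmetry alone, using the truncation at the top of the grading; so your proof shows the Lie (Jacobi) axiom is not needed for the necessity direction, at the price of the two-step rigidity argument (parity plus the $k=2$ instance). Two cosmetic points: the boundary value you write as $[e_1,e_{n+1}]=0$ should be phrased via the convention $[e_i,e_1\cdot e_n]=[e_i,0]=0$, since $e_{n+1}$ is not a basis vector; and in the odd-degree step you should say explicitly that the induction is strong (the $k=2$ instance reaches back to degree $m-1$), with base degrees $3$ and $4$.
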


The following theorem establishes a necessary and sufficient condition for two algebras in the family ${\bf TP}(\alpha_2,\dots, \alpha_n)$ to be isomorphic.

\begin{theorem} Let ${\bf TP}_2(\alpha_2,\dots, \alpha_n)$ and ${\bf TP}'_2(\alpha_2', \dots,   \alpha_n')$ are isomorphic algebras. Then there exists an automorphism $\varphi$ between these algebras such that the following relation holds for $2\leq t\leq n$:
\begin{equation}\label{eq1}
    \sum\limits_{i=2}^t\sum\limits_{k_1  +\cdots+k_i=t} A_{k_1}... A_{k_i}\alpha_i' =\sum\limits_{j=2}^{t}\sum\limits_{i=1}^{t-j+1}  \sum\limits_{k_1+k_2=t-i-j+3} (t-2i-j+3) A_i  A_{k_1} A_{k_2} \alpha_{j}.
\end{equation}
\end{theorem}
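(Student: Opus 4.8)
The plan is to realize the isomorphism concretely and then compare structure constants bracket-by-bracket. Since both algebras share the same associative multiplication $e_i\cdot e_j=e_{i+j}$ (the null-filiform structure $\mu_0^n$), any isomorphism $\varphi$ of transposed Poisson algebras must in particular be an automorphism of the associative algebra $(\mu_0^n,\cdot)$. Therefore I would begin by invoking Theorem~\ref{3.1}, which gives the explicit form
\[
\varphi(e_1)=\sum_{i=1}^n A_i e_i,\qquad \varphi(e_i)=\sum_{j=i}^n\ \sum_{k_1+\cdots+k_i=j}A_{k_1}\cdots A_{k_i}\,e_j,\quad A_1\neq 0,
\]
so that $\varphi$ is completely determined by the scalars $A_1,\dots,A_n$. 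This reduces the problem to a purely computational identity: I must express the condition that $\varphi$ also respects the two Lie brackets in terms of these $A_i$ and the two families of parameters $\alpha_i,\alpha_i'$.

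The key step is to write out $\varphi([e_1,e_2]_{\alpha'})=[\varphi(e_1),\varphi(e_2)]_{\alpha}$, using the bracket formula from the previous theorem, namely $[e_i,e_j]=(j-i)\sum_{t=i+j-1}^{n}\alpha_{t-i+j+3}e_t$. It suffices to test the isomorphism condition on the single generating bracket $[e_1,e_2]$, because every other bracket is recovered from the associative products and the transposed Poisson identity~\eqref{eq:dualp}; I would justify that reduction first, so the full system collapses to one vector equation in $\mathfrak{L}$. On the left-hand side I expand $\varphi$ applied to $[e_1,e_2]'=\sum_{t}\alpha'_{\dots}e_t$ using the image formula above, which introduces the sum $\sum_{i}\sum_{k_1+\cdots+k_i=t}A_{k_1}\cdots A_{k_i}\alpha_i'$ appearing on the left of~\eqref{eq1}. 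On the right-hand side I compute the bracket $[\varphi(e_1),\varphi(e_2)]$ by bilinearity, expanding both $\varphi(e_1)=\sum A_i e_i$ and $\varphi(e_2)=\sum_{k_1+k_2}A_{k_1}A_{k_2}e_j$ and applying the $\alpha$-bracket to each pair $[e_i,e_j]$; the factor $(j-i)$ in the bracket formula is what produces the weight $(t-2i-j+3)$ on the right of~\eqref{eq1}.

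Once both sides are expanded, I would equate the coefficient of each basis vector $e_t$ for $2\le t\le n$, which yields exactly the claimed relation~\eqref{eq1}. The main obstacle is purely bookkeeping: correctly tracking the multi-index convolution sums $\sum_{k_1+\cdots+k_i=t}$ that arise from iterating $\varphi$ on the higher basis elements, and reindexing the triple sum on the right so that the summation bounds match (the substitution $t-i-j+3=k_1+k_2$ inside the bracket expansion, combined with the outer sum over $i$ and over the bracket index $j$). I expect the delicate point to be verifying that the arithmetic weights combine to give precisely $(t-2i-j+3)$ rather than an off-by-one variant, since both the bracket coefficient $(j-i)$ and the shift $\alpha_{t-i-j+3}$ contribute index offsets; a careful check on a low-dimensional case (say $n=3$ or $n=4$) would confirm the normalization before stating the general identity.
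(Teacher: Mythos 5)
Your proposal is correct and follows essentially the same route as the paper's proofs of the directly analogous statements for $\delta=0$ and $\delta=1$ (Theorem \ref{0lemm} and its $\delta=1$ counterpart): invoke the automorphism form of Theorem \ref{3.1}, compute $[e_1',e_2']$ once by expanding $\varphi$ applied to $\sum_i \alpha_i' e_i'$ and once by bilinearity against the unprimed bracket table, then equate coefficients of $e_t$, with the reindexing $j\mapsto t-i-j+3$ converting the factor $(j-i)$ into the weight $(t-2i-j+3)$ exactly as you anticipate. The only blemish is the typo in your recalled bracket formula ($\alpha_{t-i+j+3}$ should read $\alpha_{t-i-j+3}$), which your later substitution $t-i-j+3=k_1+k_2$ already uses correctly.
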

\begin{theorem} Let $(\mu_0^n, \cdot, [-,-])$ be a transposed Poisson algebra and $n\geq 5$. Then this algebra is isomorphic to one of the following pairwise non-isomorphic algebras:
\[{\bf TP}_2(1,0,\dots,0), \ {\bf TP}_2(0,\alpha,0, \dots, 0), \ {\bf TP}_2(0,\dots, 0, 1_s,0,\dots,0,\alpha_{2s-3},0, \dots, 0), \ 4\leq s\leq n, \ \alpha\in\mathbb{C}.\]
\end{theorem}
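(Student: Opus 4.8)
The plan is to treat the classification as the computation of orbits for the action of the automorphism group of $\mu_0^n$ on the parameter space of the family $\mathbf{TP}_2(\alpha_2,\dots,\alpha_n)$. By the parametrization theorem, every transposed Poisson structure on $\mu_0^n$ is of the form $\mathbf{TP}_2(\alpha_2,\dots,\alpha_n)$, and by the preceding theorem two such structures are isomorphic exactly when their parameters are related, through some $\varphi$ of the shape in Theorem \ref{3.1} with coefficients $A_1,\dots,A_n$ (with $A_1\neq 0$), by identity \eqref{eq1}. The organizing invariant will be the \emph{depth} $s=\min\{i:\alpha_i\neq 0\}$, with the all-zero bracket absorbed into the second family as $\alpha=0$; the whole reduction is then indexed by $s$.

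First I would extract the leading behaviour of \eqref{eq1}. Reading the identity at $t=2,3,\dots,s$ and inducting on $t$, I would show that the depth is preserved, that $\alpha_2'=\dots=\alpha_{s-1}'=0$, and that the leading coefficient transforms by
\[
\alpha_s'=A_1^{\,3-s}\,\alpha_s .
\]
This already separates the three regimes. For $s=2$ one has $\alpha_2'=A_1\alpha_2$, so a suitable $A_1$ normalizes $\alpha_2$ to $1$. For $s=3$ the exponent $3-s$ vanishes, so $\alpha_3$ is scale-invariant and must persist as a modulus. For $s\geq 4$ the exponent is negative and nonzero, so again $A_1$ can be chosen (taking an $(s-3)$-th root of $\alpha_s$) to set $\alpha_s=1$.

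The core of the argument is a triangular reduction of the remaining parameters. For each $t=s+1,\dots,n$ I would isolate in \eqref{eq1} the unknown of largest index occurring, namely $A_{t-s+1}$, and collect its coefficient. Combining the right-hand contributions coming from the $j=s$ terms (in the two configurations $i=t-s+1$ and $i=1$) with the left-hand contribution $sA_1^{\,s-1}\alpha_s'$, and substituting $\alpha_s'=A_1^{\,3-s}\alpha_s$, the coefficient collapses to
\[
(t-2s+3)\,A_1^{2}\,\alpha_s .
\]
Proceeding by induction on $t$, whenever $t\neq 2s-3$ this coefficient is nonzero and $A_{t-s+1}$ may be solved so as to force $\alpha_t'=0$; the coefficients $A_i$ with index exceeding $n-s+1$ never occur and can be set to zero. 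The one exception is the resonance $t=2s-3$, where the coefficient vanishes: there $A_{s-2}$ drops out of the identity, $\alpha_{2s-3}'$ is pinned to a value fixed by the already-determined data, and it survives as the modulus $\alpha$. For $s=2$ the resonant index $t=1$ is out of range, giving $\mathbf{TP}_2(1,0,\dots,0)$; for $s=3$ the resonance coincides with the leading index, so the surviving modulus is $\alpha_3$ itself, giving $\mathbf{TP}_2(0,\alpha,0,\dots,0)$; and for $s\geq 4$ one obtains $\mathbf{TP}_2(0,\dots,0,1_s,0,\dots,0,\alpha_{2s-3},0,\dots,0)$. The hypothesis $n\geq 5$ is exactly what guarantees that the modulus position $2s-3$ already lies in range for the first such family, $s=4$.

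The two points requiring the most care are the coefficient computation above and the pairwise non-isomorphism of the listed algebras. For the former, the delicate step is verifying the cancellation that produces the factor $t-2s+3$, and hence the precise resonant index $t=2s-3$; I expect this bookkeeping inside \eqref{eq1} to be the main obstacle, the rest being a systematic unwinding. For non-isomorphism I would first note that $s$ is an invariant by the depth-preservation above, separating the three families; then, within the $s\geq 4$ family, I would show that the residual symmetries fixing the normal form act trivially on $\alpha_{2s-3}$, the essential computation being that a scaling $\lambda$ with $\lambda^{\,s-3}=1$ multiplies $\alpha_{2s-3}$ by $\lambda^{\,6-2s}=(\lambda^{\,s-3})^{-2}=1$, so that distinct values of $\alpha$ indeed yield non-isomorphic algebras.
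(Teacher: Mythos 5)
Your proposal is correct and takes essentially the same route as the paper: the classification (imported from \cite{ABT}, and carried out in the paper itself for the analogous $\delta=0,1$ families) proceeds exactly by computing orbits of the automorphisms of Theorem \ref{3.1} via the relation \eqref{eq1}, scaling the leading coefficient through $\alpha_s'=A_1^{3-s}\alpha_s$ and then eliminating the tail inductively by unipotent basis changes $e_1'=e_1+c\,e_{t-s+1}$, which is the same triangular system you solve for $A_{t-s+1}$. Your key cancellation is also right: combining the $j=s$, $i=1$ and $i=t-s+1$ contributions of \eqref{eq1} with the left-hand term $sA_1^{s-1}\alpha_s'$ indeed yields the net coefficient $(t-2s+3)A_1^2\alpha_s$, so the resonance at $t=2s-3$ correctly accounts for the surviving modulus $\alpha_{2s-3}$ in the family with $s\geq 4$, and your weight computation $\lambda^{6-2s}=(\lambda^{s-3})^{-2}=1$ settles the pairwise non-isomorphism.
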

The cases for dimensions 2, 3, and 4 are presented below. In the 2-dimensional associative algebra with multiplication given by $e_1 \cdot e_1 = e_2$, \cite{Bai} proved that every 2-dimensional complex transposed Poisson algebra is isomorphic to one of the following transposed Poisson algebras:
$${\bf TP}_2(0): \ e_1\cdot e_1=e_2; \quad {\bf TP}_2(1): \ e_1\cdot e_1=e_2, \ [e_1,  e_2]=e_2.$$

In the 3-dimensional associative algebra, every 3-dimensional complex transposed Poisson algebra is isomorphic to one of the following transposed Poisson algebras:
\[{\bf TP}_2(1,0), \ {\bf TP}_2(0,\alpha), \ \alpha\in\mathbb{C}.\]

Finally, in the 4-dimensional associative algebra, every 4-dimensional complex transposed Poisson algebra is isomorphic to one of the following transposed Poisson algebras:
\[{\bf TP}_2(1,0,0), \ {\bf TP}_2(0,\alpha,0), \ {\bf TP}_2(0,0,1), \ \alpha\in\mathbb{C}.\]

\section{Transposed $\delta$-Poisson algebra structures on null-filiform associative algebras}

Let $(\mu_0^n, \cdot, [-,-])$ be a transposed $\delta$-Poisson algebra structure defined on the null-filiform associative algebra. To establish the table of multiplications Lie of the transposed $\delta$-Poisson algebra structure, we set
$$[e_1,e_2]=\sum\limits_{t=1}^{n}\alpha_{t}e_t. $$

\begin{theorem}\label{TP} Let $(\mu_0^n, \cdot, [-,-])$ be a transposed $\delta$-Poisson algebra structure defined on the associative  algebra $\mu_0^n$. Then, for $n\geq5$, the following restriction holds:
$$(\delta ^3-3\delta ^2+2\delta)e_3\cdot[e_1,e_2]=0.$$
\end{theorem}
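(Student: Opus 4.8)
The plan is to extract, from the single transposed $\delta$-Poisson compatibility \eqref{eq:dualp}, a small closed system of relations among the low brackets $[e_i,e_j]$, all written as scalar multiples (with coefficients polynomial in $\delta$) of the fixed vectors $e_m\cdot[e_1,e_2]$. The engine is the specialization $z=e_1$ of \eqref{eq:dualp}: since $e_1\cdot e_k=e_{k+1}$, it reads
\[
\delta\, e_1\cdot[e_i,e_j]=[e_{i+1},e_j]+[e_i,e_{j+1}],
\]
a ladder recurrence valid whenever $i+1,j+1\le n$. Feeding $(i,j)=(1,2)$ gives $[e_1,e_3]=\delta\, e_1\cdot[e_1,e_2]$ immediately, and associativity of $\cdot$ lets me collapse iterated $e_1$-multiplications onto $e_2\cdot[e_1,e_2]$, $e_3\cdot[e_1,e_2]$, and so on.

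The delicate point is that the $z=e_1$ recurrence together with antisymmetry frequently collapses to the tautology $0=0$ (this occurs already at $(i,j)=(2,2)$). To break this I would bring in the specializations $z=e_2$ and $z=e_3$, which use $e_2\cdot e_k=e_{k+2}$ and $e_3\cdot e_k=e_{k+3}$ and therefore link brackets two and three rungs apart. Concretely, combining the $z=e_1$ recurrence at $(i,j)=(1,3)$ with the $z=e_2$ identity at $(x,y)=(e_1,e_2)$ eliminates the unknown $[e_1,e_4]$ and yields $2[e_2,e_3]=(\delta^2-\delta)\,e_2\cdot[e_1,e_2]$, hence $[e_2,e_3]=\tfrac12(\delta^2-\delta)\,e_2\cdot[e_1,e_2]$.

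With $[e_2,e_3]$ in hand I would produce the two brackets $[e_2,e_4]$ and $[e_1,e_5]$ that feed the final relation, each using $[e_3,e_3]=0$ and associativity of $\cdot$ to fold everything onto the single vector $e_3\cdot[e_1,e_2]$. The $z=e_1$ recurrence at $(i,j)=(2,3)$ gives $[e_2,e_4]=\delta\, e_1\cdot[e_2,e_3]=\tfrac12\delta(\delta^2-\delta)\,e_3\cdot[e_1,e_2]$, while the $z=e_2$ identity at $(x,y)=(e_1,e_3)$ gives $[e_1,e_5]=\delta\, e_2\cdot[e_1,e_3]=\delta^2\,e_3\cdot[e_1,e_2]$.

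The loop is then closed by evaluating \eqref{eq:dualp} one last time at $z=e_3$, $x=e_1$, $y=e_2$, which expresses $\delta\, e_3\cdot[e_1,e_2]=[e_4,e_2]+[e_1,e_5]=-[e_2,e_4]+[e_1,e_5]$ directly in terms of the two brackets just computed. Substituting them gives
\[
\delta\, e_3\cdot[e_1,e_2]=\Bigl(-\tfrac12\delta^3+\tfrac32\delta^2\Bigr)e_3\cdot[e_1,e_2],
\]
and rearranging produces $\tfrac12(\delta^3-3\delta^2+2\delta)\,e_3\cdot[e_1,e_2]=0$, which is the claimed identity. Throughout, the hypothesis $n\ge5$ is exactly what guarantees that $e_4$ and $e_5$ are genuine nonvanishing basis vectors, so none of the products above is forced to zero prematurely; notably the Jacobi axiom is never invoked, only antisymmetry, associativity of $\cdot$, and \eqref{eq:dualp} at $z=e_1,e_2,e_3$. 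I expect the main obstacle to be organizational rather than technical: among the many specializations of \eqref{eq:dualp} one must choose precisely the combination that avoids the tautologies and lands both $[e_2,e_4]$ and $[e_1,e_5]$ on the common vector $e_3\cdot[e_1,e_2]$, so that the final $z=e_3$ evaluation closes into a scalar equation.
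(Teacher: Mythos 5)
Your proof is correct and takes essentially the same approach as the paper: both specialize identity (\ref{eq:dualp}) at the same low-index triples (in particular $\{e_2,e_1,e_3\}$ and $\{e_3,e_1,e_2\}$, the latter appearing in the paper inside its averaged recurrence $[e_1,e_{i+1}]=\frac{\delta}{2}\bigl(e_1\cdot[e_1,e_i]+e_{i-1}\cdot[e_1,e_2]\bigr)$) and obtain the cubic constraint by colliding two expressions for the same quantity, your only deviation being bookkeeping --- you eliminate through the mixed brackets $[e_2,e_3]$ and $[e_2,e_4]$ where the paper tracks only $[e_1,e_3],[e_1,e_4],[e_1,e_5]$ and compares two values of $[e_1,e_5]$. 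All your scalar computations check out, including the final coefficient $\frac{1}{2}(\delta^3-3\delta^2+2\delta)$, which is the paper's polynomial up to the harmless factor $\frac{1}{2}$.
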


\begin{proof} By considering the identity (\ref{eq:dualp}) for triples  $\{e_1, e_1, e_{i}\}$ and $\{e_{i-1}, e_1, e_{2}\}$:
$$\begin{array}{rcl}
 \delta e_1\cdot[e_1,e_i]&=&[e_2,e_i]+[e_1,e_{i+1}], \\[1mm]
\delta e_{i-1} \cdot [e_1,e_2]&=&[e_i,e_2]+[e_1,e_{i+1}],
\end{array}$$
we derive the following recurrence relation
$$[e_1,e_{i+1}]=\frac{\delta }{2}\Big(e_1\cdot[e_1,e_{i}]+ e_{i-1}\cdot[e_1,e_{2}]\Big).$$

From this, we derive the following products
\begin{equation}\label{1e1e5}[e_1,e_3]=\delta e_1\cdot[e_1,e_2], \ [e_1,e_4]=\frac{\delta^2+\delta}{2} e_2\cdot[e_1,e_2], \  [e_1,e_5]=\frac{\delta ^3+\delta ^2+2\delta }{4}e_3\cdot[e_1,e_2].\end{equation}

Now we consider Eq. (\ref{eq:dualp}) for the triple $\{e_2, e_1, e_3\}:$ 
$$ \delta e_2\cdot[e_1,e_3]=[e_3,e_3]+[e_1,e_{5}].$$
From this we obtain 
\begin{equation}\label{2e1e5}[e_1,e_5]=\delta^2e_3\cdot[e_1,e_2].\end{equation}
From the Eqs. (\ref{1e1e5}) and (\ref{2e1e5}) we get the following
$$(\delta ^3-3\delta ^2+2\delta)e_3\cdot[e_1,e_2]=0.$$

Thus, we get the proof of the theorem. \end{proof}

\begin{corollary}\label{deltavalue} Let $(\mu_0^n, \cdot, [-,-])$ be a transposed $\delta$-Poisson algebra and $n\geq5$. Then

\begin{itemize}
                    \item If $\delta=0$, then $[e_1,e_2]=\sum\limits_{t=1}^{n}\alpha_{t}e_t$ and $[e_1,e_i]=0$ for $3\leq i\leq n;$
                    \item If $\delta=1$, then $[e_1,e_i]=\sum\limits_{t=i-1}^{n}\alpha_{t-i+2}e_t$ for $2\leq i\leq n;$
                    \item If $\delta=2$, then $[e_1,e_i]=(i-1)\sum\limits_{t=i-1}^{n}\alpha_{t-i+2}e_t$ for $2\leq i\leq n;$
                    \item If $\delta ^3-3\delta ^2+2\delta\neq0$, then $\alpha_t=0$ for $1\leq t\leq n-3$ and 
                    $$\begin{array}{lll}
                         [e_1,e_2]=\alpha_{n-2}e_{n-2}+\alpha_{n-1}e_{n-1}+\alpha_ne_n, & [e_1,e_4]=\frac{\delta^2+\delta}{2}\alpha_{n-2}e_n,&   \\[1mm] [e_1,e_3]=\delta (\alpha_{n-2}e_{n-1}+\alpha_{n-1}e_n),& [e_1,e_i]=0, &5\leq i\leq n.\end{array}$$
                  \end{itemize}
\end{corollary}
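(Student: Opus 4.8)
The plan is to derive, for each admissible value of $\delta$, the full family of brackets $[e_1,e_i]$ and then propagate these to all $[e_i,e_j]$ via the transposed $\delta$-Poisson identity, finally reading off the constraints on the structure constants $\alpha_t$. Throughout I would rely on the recurrence
\[
[e_1,e_{i+1}]=\frac{\delta}{2}\Big(e_1\cdot[e_1,e_i]+e_{i-1}\cdot[e_1,e_2]\Big)
\]
established in Theorem \ref{TP}, together with the key relation $(\delta^3-3\delta^2+2\delta)e_3\cdot[e_1,e_2]=0$, which splits the analysis according to whether $\delta$ is a root of $\delta^3-3\delta^2+2\delta=\delta(\delta-1)(\delta-2)$.

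First I would handle the three root cases $\delta=0,1,2$ separately. For $\delta=0$ the recurrence immediately forces $[e_1,e_i]=0$ for $i\geq 3$, leaving $[e_1,e_2]$ free, as claimed. For $\delta=1$ and $\delta=2$ I would compute $[e_1,e_3],[e_1,e_4],\dots$ inductively from the recurrence, observing that $e_{i-1}\cdot[e_1,e_2]=\sum_t\alpha_t\,e_{i-1}\cdot e_t=\sum_t\alpha_t e_{i-1+t}$ shifts indices upward. The expectation is that the coefficients telescope into the closed forms $[e_1,e_i]=\sum_{t=i-1}^n\alpha_{t-i+2}e_t$ (for $\delta=1$) and its $(i-1)$-multiple (for $\delta=2$); I would verify these by induction, checking that the proposed formula satisfies the recurrence, and that the index truncation at $n$ is consistent with nilpotency.

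The genuinely new case is $\delta^3-3\delta^2+2\delta\neq 0$, where the theorem forces $e_3\cdot[e_1,e_2]=0$, i.e. $\sum_t\alpha_t e_{3+t}=0$, hence $\alpha_t=0$ for $1\leq t\leq n-3$. I would substitute this vanishing back into the explicit products \eqref{1e1e5}: since only $\alpha_{n-2},\alpha_{n-1},\alpha_n$ survive, the expression $e_1\cdot[e_1,e_2]=\sum_t\alpha_t e_{1+t}$ collapses to $\alpha_{n-2}e_{n-1}+\alpha_{n-1}e_n$ (the $\alpha_n$ term falls outside the basis), giving $[e_1,e_3]=\delta(\alpha_{n-2}e_{n-1}+\alpha_{n-1}e_n)$; similarly $[e_1,e_4]=\frac{\delta^2+\delta}{2}e_2\cdot[e_1,e_2]=\frac{\delta^2+\delta}{2}\alpha_{n-2}e_n$, since only the $\alpha_{n-2}$ term lands inside the basis. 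For $i\geq 5$ the product $e_{i-1}\cdot[e_1,e_2]$ and $e_1\cdot[e_1,e_i]$ both vanish by the index bounds, so the recurrence yields $[e_1,e_i]=0$.

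The step I expect to be the main obstacle is confirming that these bracket values are mutually consistent and do not collapse further under the full identity \eqref{eq:dualp} applied to other triples—in particular the already-used triple $\{e_2,e_1,e_3\}$, which produced \eqref{2e1e5}, must remain compatible once $\alpha_t=0$ for small $t$ is imposed. I would recheck that \eqref{2e1e5} reduces to $[e_1,e_5]=\delta^2 e_3\cdot[e_1,e_2]=0$, matching the claimed $[e_1,e_i]=0$ for $i\geq 5$, so no additional constraints arise. The bookkeeping of index truncation against the nilpotency bound $i+j\leq n$ is where sign and off-by-one errors are most likely, so I would tabulate the surviving coefficients explicitly for small $n$ (say $n=5,6$) as a sanity check before asserting the general pattern.
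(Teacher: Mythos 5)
Your proposal is correct and follows essentially the same route as the paper, which obtains the corollary directly from the recurrence $[e_1,e_{i+1}]=\frac{\delta}{2}\bigl(e_1\cdot[e_1,e_i]+e_{i-1}\cdot[e_1,e_2]\bigr)$ and the constraint $(\delta^3-3\delta^2+2\delta)\,e_3\cdot[e_1,e_2]=0$ established in the proof of Theorem \ref{TP}, splitting into the root cases $\delta=0,1,2$ (where the closed forms are verified by induction exactly as you describe) and the non-root case (where $\alpha_t=0$ for $t\leq n-3$ truncates the products). Your consistency check of the triple $\{e_2,e_1,e_3\}$ against the recurrence is precisely why the paper's two expressions for $[e_1,e_5]$ agree when $\delta\in\{0,1,2\}$ and force the vanishing otherwise, so no gap remains.
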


\begin{theorem}\label{0TPA} Let $(\mu_0^n, \cdot, [-,-])$ be a transposed $0$-Poisson algebra structure defined on the associative  algebra $\mu_0^n$. Then the multiplication of $(\mu_0^n, \cdot,[-,-])$ has the following form:

$${\bf TP}_0(\alpha_1,\dots,  \alpha_n):
\left\{\begin{array}{ll}
e_i\cdot e_j=e_{i+j}, &  2\leq i+j \leq n, \\[1mm]
[e_1,e_2]=\sum\limits_{t=1}^{n}\alpha_{t}e_t.\\[1mm]

\end{array}\right.$$

\end{theorem}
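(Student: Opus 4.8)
The plan is to specialize the transposed $\delta$-Poisson identity \eqref{eq:dualp} to $\delta=0$ and read off every constraint it imposes on the Lie bracket, feeding in Corollary \ref{deltavalue} for the brackets with first index $e_1$ and then propagating to all remaining brackets. Setting $\delta=0$ in \eqref{eq:dualp} and substituting $x=e_a$, $y=e_b$, $z=e_c$ collapses the compatibility condition to the single relation
$$[e_{a+c},e_b]+[e_a,e_{b+c}]=0,$$
with the convention $e_m=0$ for $m>n$. By Corollary \ref{deltavalue} the only bracket of the form $[e_1,e_i]$ that can be nonzero is $[e_1,e_2]=\sum_{t=1}^{n}\alpha_t e_t$, while $[e_1,e_i]=0$ for $3\le i\le n$. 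It therefore suffices to prove $[e_i,e_j]=0$ for all $i,j\ge2$, which is exactly the asserted form ${\bf TP}_0(\alpha_1,\dots,\alpha_n)$.

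Taking $c=1$ in the displayed relation gives the shift identity $[e_i,e_j]=-[e_{i-1},e_{j+1}]$, valid whenever $i\ge2$ and $j+1\le n$; this shows that along each anti-diagonal $i+j=s$ the brackets coincide up to sign. For $s\le n+1$ I would iterate the shift until the first index reaches $1$, obtaining $[e_i,e_j]=(-1)^{i-1}[e_1,e_{s-1}]$; since $i,j\ge2$ forces $s\ge4$, hence $3\le s-1\le n$, the right-hand side vanishes by Corollary \ref{deltavalue}, so $[e_i,e_j]=0$. The only anti-diagonals that never reach the first index in this regime are $s=3$, where the shift merely reproduces antisymmetry of $[e_1,e_2]$ and imposes no restriction, and $s=2$, which is trivial.

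The remaining case $n+2\le s\le 2n$ is the main obstacle: iterating the shift can never lower the first index to $1$, because the second index would exceed $n$. Here I would instead use the \emph{degenerate} instances of the relation in which one associative product leaves the range. Choosing $b=n$ and any $a,c\ge1$ with $a+c=s-n$ (possible since $2\le s-n\le n$), the product $e_c\cdot e_b=e_{n+c}$ vanishes, so the relation collapses directly to $[e_{s-n},e_n]=0$. Propagating this single vanishing along the anti-diagonal via $[e_i,e_j]=-[e_{i+1},e_{j-1}]$ (valid for $j\ge2$, $i+1\le n$, with all intermediate indices staying in range on this diagonal) yields $[e_i,e_j]=0$ for every pair with $i+j=s$. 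Combining the three regimes leaves $[e_1,e_2]$ as the sole possibly nonzero bracket, which establishes the stated form; one then checks separately, and routinely, that this bracket satisfies the Jacobi identity and the $\delta=0$ compatibility for arbitrary $\alpha_t$, since any nonzero Jacobiator would require a triple confined to $\{e_1,e_2\}$, so no constraints on the parameters arise.
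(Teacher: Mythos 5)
Your proof is correct, and it reaches the paper's conclusion by the same two ingredients — Corollary \ref{deltavalue} plus instances of \eqref{eq:dualp} with $\delta=0$ — but with a different propagation scheme, so a brief comparison is worthwhile. You only ever instantiate \eqref{eq:dualp} with $z=e_1$ ($c=1$), which yields the one-step anti-diagonal shift $[e_i,e_j]=-[e_{i-1},e_{j+1}]$ and therefore forces your two-regime split: walking to the first row when $i+j\le n+1$, and, when $i+j\ge n+2$, seeding the anti-diagonal with the degenerate instance $b=n$, $e_c\cdot e_n=0$, before walking back. The paper instead takes $z=e_i$, $x=e_1$, $y=e_j$ in \eqref{eq:dualp}, giving $0=[e_{i+1},e_j]+[e_1,e_i\cdot e_j]$, so every bracket with first index at least $2$ is tied in a single step to $[e_1,e_{i+j}]$, which vanishes either by Corollary \ref{deltavalue} (when $i+j\le n$) or vacuously (when $i+j>n$, since $e_i\cdot e_j=0$); this makes your case distinction unnecessary and collapses both of your regimes at once. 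What your route buys is an explicit picture of how the constraints organize along anti-diagonals, and your boundary observation $[e_{s-n},e_n]=0$ is a genuinely correct handling of the one place where the naive shift argument would leave the index range — a point the paper's choice of triple sidesteps rather than addresses. Two smaller remarks: your appeal to Corollary \ref{deltavalue} carries the same implicit restriction $n\ge 5$ as the paper's proof (the paper treats $n=2,3,4$ separately afterwards), so you inherit no new gap there; and your closing verification that arbitrary $\alpha_t$ satisfy Jacobi and the $\delta=0$ compatibility matches the paper's concluding Jacobi check, with your observation that a nonzero Jacobiator would need a triple confined to $\{e_1,e_2\}$ being the right reason it holds.
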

\begin{proof} Let $(\mu_0^n, \cdot, [-,-])$ be a transposed $0$-Poisson algebra. We consider the identity (\ref{eq:dualp}) for the triple $\{e_{i-1}, e_1, e_2\}$:
$$0= [e_{i-1}\cdot e_1,e_2]+[e_1,e_{i-1} \cdot e_2],$$
or
\begin{equation}\label{eqstep11}[e_i,e_2]+[e_1,e_{i+1}]=0.\end{equation}
From Equation (\ref{eqstep11}) and Corollary \ref{deltavalue}, we deduce $[e_i,e_2]=0$ for $2\leq i\leq n.$

Applying induction and the identity (\ref{eq:dualp}) for $3 < i +j$, we establish:
\begin{equation}\label{eq5}
[e_i,e_j]=0.
\end{equation}

We can write
$$0= [e_{i}\cdot e_1,e_j]+[e_1,e_{i} \cdot e_j]=[e_{i+1},e_j]+[e_1,e_{i+j}]= [e_{i+1},e_j].$$

It is known that the Jacobi identity is observed for 3 different elements $\{x,y,z\}$. The identity is satisfied if any 2 of these elements are equal. In our case, the elements $e_3,\dots, e_n $ lie in the center of the Lie algebra. Therefore, the Jacobi identity is satisfied for all elements. Hence, we obtain the transposed $0$-Poisson algebras ${\bf TP}_0(\alpha_1,\dots, \alpha_n)$. 

\end{proof}

The following theorem establishes a necessary and sufficient condition for two algebras in the family ${\bf TP}_0(\alpha_1,\dots, \alpha_n)$ to be isomorphic.

\begin{theorem}\label{0lemm} Let ${\bf TP}_0(\alpha_1,\dots, \alpha_n)$ and ${\bf TP}_0'(\alpha_1', \dots,   \alpha_n')$ are isomorphic algebras. Then there exists an automorphism $\varphi$ between these algebras such that the following relation holds for $1\leq t\leq n$:

\begin{equation}\label{0eq1}
    \sum\limits_{i=1}^t\sum\limits_{k_1  +\cdots+k_i=t} A_{k_1}... A_{k_i}\alpha_i' = A_1^3 \alpha_{t} .
\end{equation}
\end{theorem}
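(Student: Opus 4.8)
The plan is to exhibit an explicit automorphism $\varphi$ of the underlying associative algebra $\mu_0^n$ and track how it transforms the bracket data. By Theorem~\ref{3.1}, any automorphism $\varphi$ is determined by the scalars $A_1,\dots,A_n$ with $A_1\neq 0$, via
\[
\varphi(e_1)=\sum_{i=1}^n A_i e_i,\qquad
\varphi(e_i)=\sum_{j=i}^n\sum_{k_1+\cdots+k_i=j}A_{k_1}\cdots A_{k_i}\,e_j,\quad 2\leq i\leq n.
\]
Since a transposed $0$-Poisson algebra in the family ${\bf TP}_0$ is completely determined by the single bracket $[e_1,e_2]=\sum_{t=1}^n\alpha_t e_t$ (all other brackets vanishing, by Theorem~\ref{0TPA}), the isomorphism condition reduces to comparing the two sides of $\varphi([e_1,e_2]')=[\varphi(e_1),\varphi(e_2)]$, where the left bracket is computed in the ${\bf TP}_0'$ structure and the right bracket in the ${\bf TP}_0$ structure. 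First I would compute each side as an explicit linear combination of the $e_t$ and then equate coefficients of $e_t$ for each $1\leq t\leq n$.

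For the right-hand side, I would expand $[\varphi(e_1),\varphi(e_2)]$ using bilinearity of the bracket together with the fact that in ${\bf TP}_0$ \emph{only} the pair $(e_1,e_2)$ (and its antisymmetric partner) gives a nonzero bracket. Writing $\varphi(e_1)=\sum_i A_i e_i$ and $\varphi(e_2)=\sum_j\bigl(\sum_{k_1+k_2=j}A_{k_1}A_{k_2}\bigr)e_j$, the only surviving term in the double sum is the one pairing the $e_1$-component of $\varphi(e_1)$ with the $e_2$-component of $\varphi(e_2)$; the $e_2$-coefficient of $\varphi(e_2)$ is $A_1^2$, and the $e_1$-coefficient of $\varphi(e_1)$ is $A_1$. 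Hence $[\varphi(e_1),\varphi(e_2)]=A_1\cdot A_1^2\,[e_1,e_2]=A_1^3\sum_{t}\alpha_t e_t$, which furnishes the right-hand side $A_1^3\alpha_t$ of \eqref{0eq1}. The key structural input here is that all higher brackets vanish, so no cross terms contribute — this is what collapses the potentially large double sum to a single monomial.

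For the left-hand side, I would apply $\varphi$ to $[e_1,e_2]'=\sum_{i=1}^n\alpha_i' e_i$ and use linearity together with the formula for $\varphi(e_i)$ above:
\[
\varphi\Bigl(\sum_{i=1}^n\alpha_i' e_i\Bigr)
=\sum_{i=1}^n\alpha_i'\sum_{j=i}^n\sum_{k_1+\cdots+k_i=j}A_{k_1}\cdots A_{k_i}\,e_j .
\]
Collecting the coefficient of $e_t$ (i.e. setting $j=t$ and noting the inner index runs over compositions $k_1+\cdots+k_i=t$, which forces $i\leq t$) yields exactly $\sum_{i=1}^t\sum_{k_1+\cdots+k_i=t}A_{k_1}\cdots A_{k_i}\,\alpha_i'$, the left-hand side of \eqref{0eq1}. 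Equating the two coefficient expressions for each $t$ then gives the stated relation.

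The main obstacle I anticipate is purely bookkeeping rather than conceptual: one must be careful that $\varphi$ is applied to the bracket of the \emph{primed} algebra while the bracket in the image is that of the \emph{unprimed} algebra (the two intertwining conventions must be matched to the direction of the isomorphism $\varphi:{\bf TP}_0'\to{\bf TP}_0$), and one must verify that the index ranges in the composition sums are correctly truncated so that the $e_t$-coefficient genuinely only involves $i\leq t$. I would also note for completeness that any such $\varphi$ is automatically a Lie-algebra homomorphism for these brackets precisely because the relation \eqref{0eq1} encodes the compatibility, and conversely given scalars satisfying \eqref{0eq1} the resulting $\varphi$ is an algebra isomorphism; the automorphism structure from Theorem~\ref{3.1} guarantees $\varphi$ already respects the associative product, so the bracket condition is the only thing left to impose.
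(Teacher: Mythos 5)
Your proposal is correct and follows essentially the same route as the paper: both expand $[\varphi(e_1),\varphi(e_2)]$ using Theorem~\ref{3.1}, observe that since $[e_1,e_2]$ is the only nonvanishing bracket (and $\varphi(e_2)$ has no $e_1$-component) the whole double sum collapses to $A_1^3[e_1,e_2]$, expand $\varphi\bigl(\sum_i\alpha_i'e_i\bigr)$ on the other side, and equate coefficients of each $e_t$. Your bookkeeping of the composition sums and of the direction of the intertwining matches the paper's computation exactly.
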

\begin{proof}
Using the automorphism of the algebra $\mu_0^n$ from Theorem \ref{3.1}, we introduce the following notations:

\[
e_1'=\varphi(e_1), \quad
e_i'=\varphi(e_i),\quad 2\leq i\leq n.
\]
Thus, we consider
 $$\begin{array}{lcl}
    [e_1',e_2'] &= &\sum\limits_{i=1}^{n}\alpha_{i}'e_i'=\sum\limits_{i=1}^n \alpha_i'\sum\limits_{j=i}^n\sum\limits_{k_1 +\dots+k_i=j} A_{k_1}...A_{k_i}e_j \\[1mm]
    &=& \sum\limits_{i=1}^n \sum\limits_{j=i}^n \sum\limits_{k_1+\dots +k_i=j}\alpha_i' A_{k_1} ...A_{k_i}e_j=\sum\limits_{t=1}^n \sum\limits_{i=1}^t\sum\limits_{k_1  +\cdots+k_i=t} \alpha_i' A_{k_1} ...A_{k_i}e_t.  \\
        \end{array}$$

On the other hand, we have
 $$ [e_1',e_2']=[\sum\limits_{i=1}^n A_i e_i, \sum\limits_{j=2}^n\sum\limits_{k_1+k_2=j} A_{k_1} A_{k_2} e_j]=\sum\limits_{k_1+k_2=2} A_1 A_{k_1} A_{k_2} \sum\limits_{t=1}^n \alpha_{t} e_t = \sum\limits_{t=1}^n A_1^3 \alpha_{t} e_t.$$

Comparing the coefficients of the obtained expressions for the basis elements for $1\leq t\leq n$, we get the following restrictions:
$$\sum\limits_{i=1}^t\sum\limits_{k_1  +\cdots+k_i=t} A_{k_1}... A_{k_i}\alpha_i' = A_1^3 \alpha_{t} .$$

\end{proof}

\begin{lemma} Let ${\bf TP}(0,\dots,0, \alpha_s, \dots,\alpha_n),$ with $\alpha_s\neq0,$  be a transposed Poisson algebra defined above. Then, there exists $A\in\mathbb{C}$ such that the relation $\alpha_{s}'=\alpha_{s}A^{3-s}$ holds for any $1\leq s\leq n.$
\end{lemma}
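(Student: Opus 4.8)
The plan is to read the desired scaling off the isomorphism criterion of Theorem~\ref{0lemm}, exploiting the ``triangular'' shape of Equation~(\ref{0eq1}). Recall that the algebras ${\bf TP}_0(\alpha_1,\dots,\alpha_n)$ and ${\bf TP}_0'(\alpha_1',\dots,\alpha_n')$ are isomorphic precisely when, for every $1\leq t\leq n$,
\[
\sum_{i=1}^{t}\ \sum_{k_1+\cdots+k_i=t} A_{k_1}\cdots A_{k_i}\,\alpha_i' \;=\; A_1^3\,\alpha_t,
\]
with $A_1\neq0$ and all $k_j\geq1$. The structural fact driving the argument is that, since the compositions $k_1+\cdots+k_i=t$ range over \emph{positive} integers, the top index $i=t$ can only be realized by $k_1=\cdots=k_t=1$; its contribution is therefore exactly $A_1^t\,\alpha_t'$, while every summand with $i<t$ involves only the coefficients $\alpha_i'$ with $i<t$.

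First I would fix the hypothesis $\alpha_1=\cdots=\alpha_{s-1}=0$ and $\alpha_s\neq0$, and prove by downward-free induction on $t$ that $\alpha_t'=0$ for all $1\leq t\leq s-1$ (this range is empty when $s=1$, so there is nothing to do in that case). The base case $t=1$ of Equation~(\ref{0eq1}) reads $A_1\alpha_1'=A_1^3\alpha_1=0$, whence $\alpha_1'=0$ because $A_1\neq0$. For the inductive step, assuming $\alpha_1'=\cdots=\alpha_{t-1}'=0$ with $t\leq s-1$, every summand on the left with $i<t$ carries a factor $\alpha_i'$ and hence vanishes, leaving only the $i=t$ term $A_1^t\alpha_t'$; since $\alpha_t=0$ the right-hand side is zero, and $A_1\neq0$ forces $\alpha_t'=0$.

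Next I would apply the very same collapse at $t=s$. Having established $\alpha_i'=0$ for all $i<s$, Equation~(\ref{0eq1}) reduces to $A_1^s\,\alpha_s'=A_1^3\,\alpha_s$. Dividing by $A_1^s\neq0$ gives $\alpha_s'=A_1^{\,3-s}\,\alpha_s$, and setting $A:=A_1$ yields the asserted relation $\alpha_s'=\alpha_s A^{3-s}$. Note that the derivation is uniform in $s$, which accounts for the statement holding for any admissible leading index $1\leq s\leq n$.

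I expect the only delicate point to be justifying that the inner composition sum collapses to a single monomial at the top index: once the constraint $k_j\geq1$ is used to pin down $i=t\Rightarrow k_1=\cdots=k_t=1$, the remainder is a clean induction. No genuine obstacle should arise, since $A_1\neq0$ is guaranteed by Theorem~\ref{3.1} and makes every division legitimate.
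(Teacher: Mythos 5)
Your proof is correct and follows essentially the same route as the paper: both specialize Equation~(\ref{0eq1}) at $t=s$, use the vanishing of $\alpha_i'$ for $i<s$ to collapse the triangular sum to the single monomial $A_1^s\alpha_s'$ (the composition $k_1+\cdots+k_s=s$ in positive parts forcing $k_1=\cdots=k_s=1$), and divide by $A_1^s\neq0$ to get $\alpha_s'=\alpha_s A_1^{3-s}$. The only difference is that you justify $\alpha_t'=0$ for $t<s$ by an explicit induction, a detail the paper merely asserts.
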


\begin{proof} Let ${\bf TP}_0(0,\dots,0,\alpha_s,\dots, \alpha_n)$ be a transposed $0$-Poisson algebra and consider a general change of basis. Then, for some natural number $s$, we have the following restriction (\ref{0eq1}):
$$ \sum\limits_{i=1}^s\sum\limits_{k_1  +\cdots+k_i=s} A_{k_1}... A_{k_i}\alpha_i' = A_1^3 \alpha_{s} .$$

Now, considering that $\alpha_i=0,$ $\alpha'_i=0,$ for $1\leq i< s-1$, this reduces to
$$ \sum\limits_{k_1  +\cdots+k_i=s} A_{k_1}... A_{k_i}\alpha_s' = A_1^3 \alpha_{s}$$
which simplifies further to $A_1^s\alpha_s'= A_1^3  \alpha_{s}.$ From this equality, we obtain the relation $\alpha_{s}'=\alpha_{s}A_1^{3-s}.$

\end{proof}

\begin{lemma}\label{0lem2}  If $\alpha_1\neq0,$ then ${\bf TP}_0(\alpha_1, \dots, \alpha_n)$ is
isomorphic to the algebra ${\bf TP}_0(1, 0, \dots, 0)$.
\end{lemma}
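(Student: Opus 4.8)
The plan is to use Theorem~\ref{0lemm}, specialized to the case $\alpha_1 \neq 0$, to show that the defining relation (\ref{0eq1}) lets me normalize the first structure constant to $1$ and kill all the others. The key observation is that the isomorphism condition for $t=1$ reads simply $A_1 \alpha_1' = A_1^3 \alpha_1$, i.e. $\alpha_1' = A_1^2 \alpha_1$. Since $A_1 \neq 0$ in any automorphism (by Theorem~\ref{3.1}) and $\alpha_1 \neq 0$ by hypothesis, I can choose $A_1$ so that $\alpha_1' = 1$; concretely, picking $A_1$ with $A_1^2 = \alpha_1^{-1}$ forces the target algebra to have $\alpha_1' = 1$. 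This handles the leading coefficient.

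Next I would show that, having fixed $\alpha_1 \neq 0$, the remaining parameters $A_2, \dots, A_n$ of the automorphism can be chosen recursively to force $\alpha_t' = 0$ for all $2 \leq t \leq n$. For each $t \geq 2$, relation (\ref{0eq1}) reads
\[
\sum_{i=1}^t \sum_{k_1 + \cdots + k_i = t} A_{k_1}\cdots A_{k_i}\,\alpha_i' = A_1^3 \alpha_t.
\]
I want to solve this for $\alpha_t' = 0$, treating the equation as a constraint on $A_t$. The idea is to isolate the term in the sum that contains $A_t$ linearly: the $i=1$ summand is $A_t \alpha_1'$, and since $\alpha_1' = 1 \neq 0$, this gives a term $A_t$ with nonzero coefficient. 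All other summands on the left (from $i \geq 2$, or from $i=1$ with smaller-index $A$'s) involve only $A_1, \dots, A_{t-1}$ together with $\alpha_2', \dots, \alpha_{t}'$; but by the inductive hypothesis I have already arranged $\alpha_2' = \cdots = \alpha_{t-1}' = 0$, so those contributions simplify considerably. Setting $\alpha_t' = 0$ and solving the resulting affine equation for $A_t$ determines $A_t$ uniquely in terms of the previously chosen $A_1, \dots, A_{t-1}$ and the fixed data $\alpha_t$.

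The main obstacle — or at least the point requiring care — is verifying that the coefficient of $A_t$ in (\ref{0eq1}) is genuinely nonzero and that no other occurrence of $A_t$ contaminates the linear solve. Because the multi-index sums $\sum_{k_1+\cdots+k_i=t}$ with $i \geq 2$ range over compositions of $t$ into at least two positive parts, each part is at most $t-1$, so $A_t$ never appears in those terms; the only appearance of $A_t$ is in the $i=1$ term $A_t\alpha_1'$. This confirms the equation is affine-linear in $A_t$ with leading coefficient $\alpha_1' = 1$, so a unique solution always exists. I would present this as a clean induction on $t$: the base case $t=1$ normalizes $\alpha_1'$ to $1$, and the inductive step solves for $A_t$ to make $\alpha_t' = 0$. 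Once all $\alpha_t' = 0$ for $t \geq 2$ and $\alpha_1' = 1$, the resulting algebra is exactly ${\bf TP}_0(1,0,\dots,0)$, completing the proof.
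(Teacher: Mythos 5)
Your proof is correct, but it takes a genuinely different route from the paper's. The paper never invokes the isomorphism criterion of Theorem~\ref{0lemm} inside this lemma; instead it performs a sequence of elementary basis changes: first the scaling $e_i'=\alpha_1^{-i/2}e_i$ to normalize $\alpha_1'=1$, then an induction in which each step applies one substitution $e_1'=e_1+\alpha_{j+1}e_{j+1}$, recomputes the whole basis via the binomial expansion $e_i'=\sum_t\binom{i}{t}\alpha_{j+1}^t e_{i+tj}$, and kills one coefficient at a time. You instead work entirely at the level of the structure-constant relations \eqref{0eq1} and solve for a single automorphism: your key structural observation — that in any composition $k_1+\cdots+k_i=t$ with $i\geq 2$ every part is at most $t-1$, so $A_t$ enters \eqref{0eq1} only through the $i=1$ term $A_t\alpha_1'$ — is exactly right, and since all the $\alpha_i'$ with $2\leq i\leq t$ are being set to zero, your ``affine solve'' actually collapses to the closed form $A_t=A_1^3\alpha_t$, with no genuine recursion needed. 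What each approach buys: yours is shorter and produces the normalizing automorphism in one shot with explicit coefficients; the paper's is self-contained in that it only uses the forward (necessity) computation. That points to the one thing you should make explicit: Theorem~\ref{0lemm} as literally stated only asserts that isomorphic algebras satisfy \eqref{0eq1}, whereas you use the converse — that any choice of $A_1\neq 0, A_2,\dots,A_n$ satisfying \eqref{0eq1} defines an isomorphism. This converse is true (the paper even advertises the theorem as ``necessary and sufficient''), but it requires the observation that an arbitrary automorphism of $(\mu_0^n,\cdot)$ from Theorem~\ref{3.1} keeps you inside the family ${\bf TP}_0$: since $e_j'$ for $j\geq 2$ lies in the span of $e_j,\dots,e_n$ and the only nonzero bracket is $[e_1,e_2]$, all brackets $[e_i',e_j']$ with $(i,j)\neq(1,2)$ vanish automatically, so the transformed structure is again some ${\bf TP}_0(\alpha')$ with $\alpha'$ governed by \eqref{0eq1}. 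Adding that one sentence closes the only gap in your argument.
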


\begin{proof} Let $\alpha_1\neq0$ and consider the following change of basis:
$e_i'=\frac{1}{\sqrt{\alpha_1^{i}}}e_i, 1\leq i\leq n,$ then we have $\alpha_1'=1.$

Next, consider another change of basis $e_1'=e_1+\alpha_2e_2.$  From the relations $e_i\cdot e_j=e_{i+j}$ for $2\leq i+j\leq n,$ we obtain
$$e_i'=\sum\limits_{t=0}^{i}\left(\begin{array}{cc}
     i  \\
     t
\end{array}\right)\alpha_2^te_{i+t}, \ \ 1\leq i\leq n,$$
We conclude that $\alpha_2'=0.$

Now, we prove by induction that it is possible to set $\alpha_j=0,$ for all $2\leq j\leq n$. The base case $j = 2$ is already established. Assuming the claim holds for some $j$, we show it also holds for $j+1$.  Consider the change of basis: $$e_1'=e_1+\alpha_{j+1}e_{j+1}.$$  Using $e_i\cdot e_j=e_{i+j}$ for $2\leq i+j\leq n,$ we derive
$$e_i'=\sum\limits_{t=0}^{i}\left(\begin{array}{cc}
     i  \\
     t
\end{array}\right)\alpha_{j+1}^te_{i+tj}, \ \ 1\leq i\leq n,$$
where we assume that $e_t=0$ for $t>n$ in this sum. We obtain that $\alpha_{j+1}=0.$

By induction, we have $\alpha_j=0,$ for all $2\leq j\leq n$, proving that the algebra $\mathfrak{L}$ is isomorphic to ${\bf TP}_0(1, 0, \dots, 0)$, with the following multimplication rules:
$$\left\{\begin{array}{ll}
e_i\cdot e_j=e_{i+j}, &  2\leq i+j \leq n, \\[1mm]
[e_1,e_2]=e_{1}.\\[1mm]
\end{array}\right.$$
\end{proof}

\begin{lemma}\label{0lem3} If $ \alpha_2\neq0,$ then ${\bf TP}_0(0, \alpha_2, \dots, \alpha_n)$ is
isomorphic to the algebra ${\bf TP}_0(0,1, 0, \dots, 0)$.
\end{lemma}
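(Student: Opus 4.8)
The plan is to follow the same two-stage strategy as in the proof of Lemma \ref{0lem2}, but with the pivot shifted from position $1$ to position $2$. Here we are in the case $\alpha_1=0$ and $\alpha_2\neq 0$, and the only nonzero bracket of ${\bf TP}_0(0,\alpha_2,\dots,\alpha_n)$ is $[e_1,e_2]=\sum_{t=1}^n\alpha_t e_t$. First I would normalize the leading coefficient: applying the scaling automorphism with $A_1=\alpha_2^{-1}$ and $A_k=0$ for $k\geq 2$ (that is, $e_i'=\alpha_2^{-i}e_i$), the scaling relation $\alpha_s'=\alpha_s A_1^{3-s}$ obtained above gives $\alpha_2'=A_1\alpha_2=1$, while $\alpha_1'=A_1^2\alpha_1=0$ is preserved. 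Thus we may assume from the outset that $\alpha_1=0$ and $\alpha_2=1$.

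The second stage eliminates $\alpha_3,\dots,\alpha_n$ one index at a time by an induction on $j$ mirroring the one in Lemma \ref{0lem2}. Assuming $\alpha_1=0$, $\alpha_2=1$ and $\alpha_3=\dots=\alpha_j=0$ have already been arranged, I would apply the change of basis $e_1'=e_1+\tfrac{\alpha_{j+1}}{2}e_j$, i.e.\ the automorphism with $A_1=1$, $A_j=\tfrac{\alpha_{j+1}}{2}$ and all other $A_k=0$. Feeding these coefficients into the isomorphism relation (\ref{0eq1}) and using $A_1^3=1$, only products of $A_1$'s and $A_j$'s survive; writing $b$ for the number of factors equal to $A_j$, a composition $k_1+\dots+k_i=t$ contributes exactly when $i=t-b(j-1)$, so the relation for the coefficient of $e_t$ becomes the triangular recursion $\alpha_t'=\alpha_t-\sum_{b\geq 1}\binom{t-b(j-1)}{b}\left(\tfrac{\alpha_{j+1}}{2}\right)^b\alpha_{t-b(j-1)}'$. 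For $t=j+1$ the single surviving cross-term is the $b=1$ contribution of the pivot, namely $\binom{2}{1}\tfrac{\alpha_{j+1}}{2}\alpha_2'=\alpha_{j+1}$, so that $\alpha_{j+1}'=\alpha_{j+1}-\alpha_{j+1}=0$, as desired.

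The step that requires care, and which I regard as the crux, is checking that this change of basis does not disturb the coefficients already put in normal form. Reading the recursion for $t\leq j$, every term with $b\geq 1$ has index $t-b(j-1)\leq 1$, hence involves only $\alpha_1'=0$ (or a nonexistent index); consequently $\alpha_1'=0$, $\alpha_2'=1$ and $\alpha_3'=\dots=\alpha_j'=0$ are all preserved, while the elimination modifies only coefficients of index $\geq j+1$. Running $j$ from $2$ up to $n-1$ therefore drives $\alpha_3,\dots,\alpha_n$ to zero in turn without reintroducing earlier terms, so the resulting algebra is ${\bf TP}_0(0,1,0,\dots,0)$. The only arithmetic difference from Lemma \ref{0lem2} is the binomial factor $\binom{2}{1}=2$ produced by the pivot sitting in position $2$ (against $\binom{1}{1}=1$ there), which is harmless since we work over $\mathbb{C}$ and may always divide by $2$.
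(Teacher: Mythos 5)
Your proof is correct and follows essentially the same route as the paper: normalize $\alpha_2'=1$ by the scaling $e_i'=\alpha_2^{-i}e_i$, then inductively kill $\alpha_{j+1}$ via the substitution $e_1'=e_1+\frac{\alpha_{j+1}}{2}e_j$, where the division by $2$ compensates the binomial factor $\binom{2}{1}$ coming from the pivot in position $2$. The only difference is presentational: you run the bookkeeping through the isomorphism relation (\ref{0eq1}) and explicitly verify that coefficients of index $\leq j$ are undisturbed, a point the paper's proof (which expands the transformed basis vectors $e_i'$ directly) leaves implicit.
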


\begin{proof} Let $\alpha_2\neq0$ and consider the following change of basis:
$e_i'=\frac{1}{{\alpha_2^{i}}}e_i, 1\leq i\leq n$. Then, we have $\alpha_2'=1.$

Next, consider the following change of basis: $$e_1'=e_1+\frac{\alpha_3}{2}e_2.$$
From the products $e_i\cdot e_j=e_{i+j}$ for $2\leq i+j\leq n,$ we obtain
$$e_i'=\sum\limits_{t=0}^{i}\left(\begin{array}{cc}
     i  \\
     t
\end{array}\right)\left(\frac{\alpha_3}{2}\right)^te_{i+t}, \ \ 1\leq i\leq n,$$
where we assume that $e_t=0$ for $t>n$.
We conclude that $\alpha_3'=0.$

Now, we prove by induction that we can eliminate $\alpha_j$ for all $3\leq j\leq n$. The base case $j = 3$ follows from the above step. Assuming that $\alpha_j=0$ holds for some $j\geq 3$, we prove it for $j+1$. Consider the change of basis: $$e_1'=e_1+\frac{\alpha_{j+1}}{2}e_{j}.$$  From the products $e_i\cdot e_j=e_{i+j}$ for $2\leq i+j\leq n,$ we derive
$$e_i'=\sum\limits_{t=0}^{i}\left(\begin{array}{cc}
     i  \\
     t
\end{array}\right)\left(\frac{\alpha_{j+1}}{2}\right)^te_{i+t(j-1)}, \ \ 1\leq i\leq n,$$
where we again assume that $e_t=0$ for $t>n$. We have that $\alpha_{j+1}=0.$

Thus, we have shown that $\alpha_j=0,$ for $3\leq j\leq n$, and the algebra $\mathfrak{L}$ is isomorphic to the algebra ${\bf TP}_0(0, 1, 0, \dots, 0)$:
$$\left\{\begin{array}{ll}
e_i\cdot e_j=e_{i+j}, &  2\leq i+j \leq n, \\[1mm]
[e_1,e_2]=e_{2}.\\[1mm]
\end{array}\right.$$

\end{proof}

\begin{lemma}\label{0lem4} Let $\mathfrak{L}$ be the algebra ${\bf TP}_0(0,0,\alpha_3 \dots, \alpha_n)$. If $\alpha_3\neq0,$ then it is
isomorphic to the algebra ${\bf TP}_0(0,0, \alpha,0,0, \dots, 0)$.
\end{lemma}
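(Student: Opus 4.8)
The plan is to mirror the inductive elimination carried out in Lemmas~\ref{0lem2} and \ref{0lem3}, the only new feature being that the leading nonzero coefficient now sits in position~$3$. First I would record that $\alpha_3$ is a genuine invariant of the isomorphism class: specializing the scaling relation of the preceding lemma (equivalently, the constraint~(\ref{0eq1}) with $A_k=0$ for all $k\geq 2$) to $s=3$ gives $\alpha_3'=\alpha_3\,A_1^{3-3}=\alpha_3$. Hence $\alpha_3$ can never be normalised to $1$, which is precisely why the normal form ${\bf TP}_0(0,0,\alpha,0,\dots,0)$ must retain a free parameter $\alpha=\alpha_3$; the remaining task is therefore only to annihilate $\alpha_4,\dots,\alpha_n$.

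For the base step I would kill $\alpha_4$ through the change of basis $e_1'=e_1+\tfrac{\alpha_4}{3\alpha_3}e_2$. As in the previous lemmas, $e_i\cdot e_j=e_{i+j}$ forces $e_i'=\sum_{t=0}^{i}\binom{i}{t}c^{\,t}e_{i+t}$ with $c=\tfrac{\alpha_4}{3\alpha_3}$, and since in a transposed $0$-Poisson algebra every bracket other than $[e_1,e_2]$ vanishes (Corollary~\ref{deltavalue}), all cross terms drop and $[e_1',e_2']=[e_1,e_2]=\sum_t\alpha_t e_t$. Re-expanding the right-hand side in the primed basis produces the triangular system $\alpha_v=\sum_{u\leq v}\binom{u}{v-u}c^{\,v-u}\alpha_u'$: its rows $v=1,2$ keep $\alpha_1'=\alpha_2'=0$, its row $v=3$ returns $\alpha_3'=\alpha_3$, and its row $v=4$ reads $\alpha_4=\alpha_4'+3c\,\alpha_3$, so the chosen $c$ forces $\alpha_4'=0$.

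For the inductive step I would assume $\alpha_4=\dots=\alpha_j=0$ and eliminate $\alpha_{j+1}$ by $e_1'=e_1+\tfrac{\alpha_{j+1}}{3\alpha_3}e_{j-1}$, which now carries shift $j-2$, so that $e_i'=\sum_t\binom{i}{t}c^{\,t}e_{i+t(j-2)}$. The point of placing the perturbation in degree $j-1$ is that the anchor term $\alpha_3 e_3$ then feeds only the indices $3,\,j+1,\,2j-1,\dots$: it reaches $e_{j+1}$ via $\binom{3}{1}c\,\alpha_3=3c\,\alpha_3$ but skips $e_4,\dots,e_j$, so $\alpha_3$ and every previously cleared coefficient are left untouched while $\alpha_{j+1}'=\alpha_{j+1}-3c\,\alpha_3$ is driven to zero. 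Iterating over $j=3,4,\dots,n-1$ clears all coefficients above position $3$, leaving exactly ${\bf TP}_0(0,0,\alpha,0,\dots,0)$ with $\alpha=\alpha_3$.

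The main obstacle I expect is the simultaneous bookkeeping in the inductive step: one must verify that a single scalar $c$ both kills the target coefficient and, thanks to the binomial/shift structure, fixes $\alpha_3$ and all of $\alpha_4,\dots,\alpha_j$ already set to zero. The remaining worry — that some hidden relation might forbid freely zeroing these constants — is automatically dispelled, since by Theorem~\ref{0TPA} the only nonzero bracket is $[e_1,e_2]$ and its structure constants are entirely unconstrained.
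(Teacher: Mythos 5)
Your proposal is correct and takes essentially the same route as the paper: the invariance $\alpha_3'=\alpha_3$ extracted from (\ref{0eq1}), the base substitution $e_1'=e_1+\frac{\alpha_4}{3\alpha_3}e_2$, and the inductive substitutions $e_1'=e_1+\frac{\alpha_{j+1}}{3\alpha_3}e_{j-1}$ with shift $j-2$ are exactly the steps in the paper's proof. Your explicit triangular system $\alpha_v=\sum_{u\leq v}\binom{u}{v-u}c^{\,v-u}\alpha_u'$ and the check that the anchor $\alpha_3 e_3$ only feeds indices $3,\,j+1,\,2j-1,\dots$ simply spell out the coefficient comparison that the paper leaves implicit.
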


\begin{proof} From the equality \eqref{0eq1}, we obtain the following equality in general substitution.
$ \alpha_3'=\alpha_3.$

Now, consider the change of basis: $e_1'=e_1+\frac{\alpha_4}{3\alpha_3}e_2.$  From the products $e_i\cdot e_j=e_{i+j}$ for $2\leq i+j\leq n,$ we derive
$$e_i'=\sum\limits_{t=0}^{i}\left(\begin{array}{cc}
     i  \\
     t
\end{array}\right)\left(\frac{\alpha_4}{3\alpha_3}\right)^te_{i+t}, \ \ 1\leq i\leq n,$$
where we assume that $e_t=0$ for $t>n$ in this sum. We obtain $\alpha_4'=0.$

Now we prove by induction that it is possible to set $\alpha_j=0,$ for $4\leq j\leq n$. The base case $j = 4$ holds by the above argument. Now, assuming it holds for some $j$, we prove it for $j + 1$. Consider basis change: $e_1'=e_1+\frac{\alpha_{j+1}}{3\alpha_3}e_{j-1}.$  From the products $e_i\cdot e_j=e_{i+j}$ for $2\leq i+j\leq n,$ we derive
$$e_i'=\sum\limits_{t=0}^{i}\left(\begin{array}{cc}
     i  \\
     t
\end{array}\right)\left(\frac{\alpha_{j+1}}{3\alpha_3}\right)^te_{i+t(j-2)}, \ \ 1\leq i\leq n,$$
where we assume that $e_t=0$ for $t>n$ in this sum. We obtain that $\alpha_{j+1}=0.$

Thus, we have shown that $\alpha_j=0,$ for $4\leq j\leq n$, and the algebra $\mathfrak{L}$ is isomorphic to the algebra ${\bf TP}_0(0,0, \alpha,0, \dots, 0)$:
$$\left\{\begin{array}{ll}
e_i\cdot e_j=e_{i+j}, &  2\leq i+j \leq n, \\[1mm]
[e_1,e_2]=\alpha e_{3}
\end{array}\right.$$

\end{proof}

\begin{lemma}\label{0lems} Let $\mathfrak{L}$ be the algebra ${\bf TP}_0(0, \dots,0,\alpha_s,\dots, \alpha_n)$. If $ \alpha_s\neq0, \ s\geq 4$ then it is
isomorphic to the algebra ${\bf TP}_0(0,\dots, 0, 1_s,0,\dots,0)$.
\end{lemma}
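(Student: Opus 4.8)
The plan is to follow the two-phase scheme already used in Lemmas \ref{0lem2}--\ref{0lem4}: first rescale the basis to normalize the leading coefficient to $\alpha_s=1$, and then remove the higher coefficients $\alpha_{s+1},\dots,\alpha_n$ one at a time by a chain of \emph{shear} substitutions $e_1'=e_1+\gamma e_m$.

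For the first phase I would use the diagonal change of basis $e_i'=c^{\,i}e_i$, which in the notation of Theorem \ref{3.1} corresponds to $A_1=c$ and $A_k=0$ for $k\geq 2$. Then the left-hand side of the isomorphism relation \eqref{0eq1} collapses to the single term $A_1^{\,t}\alpha_t'$, giving $\alpha_t'=c^{\,3-t}\alpha_t$ and in particular $\alpha_s'=c^{\,3-s}\alpha_s$. Here the hypothesis $s\geq 4$ is decisive: since then $3-s\neq 0$, over $\mathbb{C}$ one may pick $c$ with $c^{\,s-3}=\alpha_s$, so that $\alpha_s'=1$. (This is exactly the step that breaks down when $s=3$, where $3-s=0$ forces $\alpha_3'=\alpha_3$, and is why Lemma \ref{0lem4} must keep a free parameter.) After this rescaling I relabel and assume $\alpha_s=1$.

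For the second phase I would eliminate the coefficients inductively. Assuming $\alpha_s=1$ and $\alpha_{s+1}=\dots=\alpha_{j-1}=0$ have been arranged, to kill $\alpha_j$ I take $e_1'=e_1+\gamma e_m$ with $m=j-s+1\geq 2$ and $\gamma=\tfrac{\alpha_j}{s}$. The products $e_i\cdot e_j=e_{i+j}$ give the unitriangular expansion $e_i'=\sum_t\binom{i}{t}\gamma^{t}e_{\,i+t(m-1)}$; and because in ${\bf TP}_0$ the bracket $[e_1,e_2]$ is the only nonzero Lie product (the elements $e_3,\dots,e_n$ being central by Corollary \ref{deltavalue}), all cross terms cancel and $[e_1',e_2']=[e_1,e_2]=\sum_t\alpha_t e_t$. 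It then remains to re-expand this fixed element in the new basis: inverting the unitriangular relation yields $e_s=e_s'-s\gamma\,e_j'+(\text{terms of degree}>j)$ and $e_t=e_t'+(\text{higher})$ for $t>s$, since $m-1=j-s$ sends the first correction of $e_s$ precisely into degree $j$. Hence the coefficient of $e_s'$ remains $1$, those of $e_{s+1}',\dots,e_{j-1}'$ remain $0$, and that of $e_j'$ becomes $\alpha_j-s\gamma$, which vanishes for $\gamma=\tfrac{\alpha_j}{s}$. Running $j$ from $s+1$ to $n$ reduces the bracket to $[e_1,e_2]=e_s$, i.e.\ to ${\bf TP}_0(0,\dots,0,1_s,0,\dots,0)$.

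The main obstacle is the bookkeeping of this second phase: I must verify that each shear disturbs only the coefficients of degree $\geq j$ and fixes all the lower ones. This reduces to two points I would check explicitly, namely that the degree shift $m-1=j-s$ is chosen so that the first correction to $e_s'$ lands exactly in degree $j$ (leaving degrees $s,\dots,j-1$ intact), and that $[e_1',e_2']=[e_1,e_2]$, which follows from the vanishing of every bracket other than $[e_1,e_2]$. With these in hand the induction proceeds exactly as in Lemmas \ref{0lem3} and \ref{0lem4}.
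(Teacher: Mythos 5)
Your proposal is correct and follows essentially the same route as the paper: the diagonal rescaling exploiting $\alpha_s'=\alpha_s A_1^{3-s}$ with $s\geq 4$ (so $s-3\neq 0$) to normalize $\alpha_s=1$, followed by the same inductive chain of shears $e_1'=e_1+\frac{\alpha_j}{s}e_{j-s+1}$ killing $\alpha_{s+1},\dots,\alpha_n$ one at a time. Your added verification that each shear fixes the coefficients below degree $j$ and that $[e_1',e_2']=[e_1,e_2]$ (since $[e_1,e_2]$ is the only nonzero bracket) merely makes explicit the bookkeeping the paper leaves implicit.
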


\begin{proof}
Let $\alpha_s\neq0$ and consider the change of basis as follows
$ e_i'=\alpha_s^{\frac{i}{s-3}}e_i, \ \ 1\leq i\leq n.$ Then, we have $\alpha_s'=1.$

Now consider the change of basis as follows: $e_1'=e_1+\frac{\alpha_{s+1}}{s}e_2.$  From the products $e_i\cdot e_j=e_{i+j}$ for $2\leq i+j\leq n,$ we derive
$$e_i'=\sum\limits_{t=0}^{i}\left(\begin{array}{cc}
     i  \\
     t
\end{array}\right)\left(\frac{\alpha_{s+1}}{s}\right)^te_{i+t}, \ \ 1\leq i\leq n,$$
where we assume that $e_t=0$ for $t>n$ in this sum.
We can derive that $\alpha_{s+1}=0.$

Now we prove by induction using successive changes of the basis elements that it is possible to make $\alpha_j=0,$ for $s+1\leq j\leq n$. If $j = i$, the relationship is fulfilled according to the above equalities. Now, that it holds for some $j$, we prove it for $j + 1$. We consider the change of the basis element $e_1$ as $e_1'=e_1+\frac{\alpha_{j+1}}{s}e_{j-s+2}.$  From the products  $e_i\cdot e_j=e_{i+j}$ for $2\leq i+j\leq n,$ we derive
$$e_i'=\sum\limits_{t=0}^{i}\left(\begin{array}{cc}
     i  \\
     t
\end{array}\right)\left(\frac{\alpha_{j+1}}{s}\right)^te_{i+t(j-s+1)}, \ \ 1\leq i\leq n,$$
where we assume that $e_t=0$ for $t>n$ in this sum. We can derive that $\alpha_{j+1}'=0.$

Thus, we have shown that $\alpha_j=0,$ for $s+1\leq j\leq n$, and the algebra $\mathfrak{L}$ is isomorphic to the algebra ${\bf TP}_0(0,\dots, 0, 1_s,0,\dots,0)$:
$$\left\{\begin{array}{ll}
e_i\cdot e_j=e_{i+j}, &  2\leq i+j \leq n, \\[1mm]
[e_1,e_2]=e_{s}.\\[1mm]
\end{array}\right.$$

\end{proof}

\begin{theorem}\label{tdelta=0} Let $(\mu_0^n, \cdot, [-,-])$ be a transposed $0$-Poisson algebra and $n\geq 5$. Then, this algebra is isomorphic to one of the following pairwise non-isomorphic algebras:
\[{\bf TP}_0(1,0,\dots,0), \  {\bf TP}_0(0,1,0,\dots,0), \ {\bf TP}(0,0,\alpha,0, \dots, 0), \ {\bf TP}_0(0,\dots, 0, 1_s,0,\dots,0), \ 4\leq s\leq n, \ \alpha\in\mathbb{C}.\]
\end{theorem}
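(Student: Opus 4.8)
The plan is to combine Theorem~\ref{0TPA}, which establishes that every transposed $0$-Poisson algebra is of the form ${\bf TP}_0(\alpha_1,\dots,\alpha_n)$, with the sequence of reduction lemmas (Lemmas~\ref{0lem2}--\ref{0lems}) and the isomorphism criterion of Theorem~\ref{0lemm}. Concretely, I would argue by cases according to the smallest index $s$ for which $\alpha_s \neq 0$. Given an arbitrary algebra ${\bf TP}_0(\alpha_1,\dots,\alpha_n)$, I first note that if all $\alpha_t$ vanish the bracket is trivial (excluded, or absorbed as a degenerate case), so there is a well-defined first nonzero coordinate.

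The case analysis then proceeds as follows. If $\alpha_1 \neq 0$, then by Lemma~\ref{0lem2} the algebra is isomorphic to ${\bf TP}_0(1,0,\dots,0)$. If $\alpha_1 = 0$ but $\alpha_2 \neq 0$, then after scaling we may assume the first coordinate stays zero (using that the substitutions in Lemma~\ref{0lem3} only involve $e_2$ and higher, which do not reintroduce an $e_1$-component), and Lemma~\ref{0lem3} gives ${\bf TP}_0(0,1,0,\dots,0)$. If $\alpha_1 = \alpha_2 = 0$ but $\alpha_3 \neq 0$, Lemma~\ref{0lem4} yields ${\bf TP}_0(0,0,\alpha,0,\dots,0)$ with the parameter $\alpha$ surviving as an invariant. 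Finally, if the first nonzero coordinate is $\alpha_s$ with $s \geq 4$, Lemma~\ref{0lems} reduces the algebra to ${\bf TP}_0(0,\dots,0,1_s,0,\dots,0)$. This exhausts all possibilities and produces exactly the four families in the statement.

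It remains to verify that these representatives are pairwise non-isomorphic, which is where the isomorphism relation~(\ref{0eq1}) of Theorem~\ref{0lemm} does the work. The key observation is that any automorphism $\varphi$ of $\mu_0^n$ has $A_1 \neq 0$ and satisfies $\varphi(e_i) \in \langle e_i, e_{i+1}, \dots, e_n\rangle$, so it cannot lower the index of the leading term of the bracket. Reading~(\ref{0eq1}) with $t$ equal to the smallest surviving index shows that the minimal index $s$ of a nonzero $\alpha_s$ is an isomorphism invariant; this separates the four families from one another. Within the third family one checks via the scaling relation $\alpha_3' = \alpha_3$ (the $s=3$ specialization, where the exponent $3-s$ vanishes) that distinct values of $\alpha$ give genuinely distinct algebras, so $\alpha$ is a true modulus.

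I expect the main obstacle to be bookkeeping rather than conceptual: one must check carefully that each successive change of basis in the inductive steps does not disturb the coordinates already set to zero, and that the substitutions used to kill $\alpha_{j+1}$ at one stage do not resurrect lower-index terms (this is guaranteed by the upper-triangular, degree-raising shape of the automorphisms in Theorem~\ref{3.1}, but it must be stated). The only genuinely delicate point is justifying that $\alpha$ in the third family is an invariant and not removable by a further rescaling; here one uses that the exponent $3-s$ in the scaling relation $\alpha_s' = \alpha_s A_1^{3-s}$ degenerates to $A_1^0 = 1$ precisely at $s=3$, leaving $\alpha$ fixed, whereas for $s\neq 3$ the exponent is nonzero and permits normalization to $1$.
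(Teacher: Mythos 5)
Your proposal is correct and follows essentially the same route as the paper: the paper's proof likewise invokes Theorem~\ref{0TPA} and then splits into cases according to the first nonzero coefficient $\alpha_s$, applying Lemmas~\ref{0lem2}, \ref{0lem3}, \ref{0lem4}, and \ref{0lems} in turn (plus the trivial case $\alpha_i=0$ for all $i$). Your additional verification of pairwise non-isomorphism via the invariant minimal index $s$ and the scaling relation $\alpha_s'=\alpha_s A_1^{3-s}$ (with $\alpha_3$ fixed at $s=3$) is a welcome supplement, since the paper's own proof asserts but does not explicitly check this part of the statement.
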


\begin{proof} Let $(\mu_0^n, \cdot, [-,-])$ be a transposed Poisson algebra structure defined on the associative  algebra $\mu_0^n$. Then Theorem \ref{0TPA} implies that it is isomorphic to the algebra ${\bf TP}_0(\alpha_1,\dots, \alpha_n)$. Moreover,
\begin{itemize}
    \item If $\alpha_1\neq 0$ then from Lemma \ref{0lem2} we have ${\bf TP}_0(1,0,\dots,0)$ algebra;
    \item If $\alpha_1=0, \ \alpha_2\neq 0$ then from Lemma \ref{0lem3} we have ${\bf TP}_0(0,1,0, \dots, 0)$ algebra, where $\alpha\neq0$;
    \item If $\alpha_1=\alpha_2=0, \ \alpha_3\neq 0$ then from Lemma \ref{0lem4} we have ${\bf TP}_0(0,0,\alpha,0, \dots, 0)$ algebra;
    \item If $\alpha_1=\dots=\alpha_{s-1}=0, \ \alpha_s\neq 0, \ s\geq4$ then from Lemma \ref{0lems} we have the algebra $${\bf TP}_0(0,\dots, 0, 1_s,0,\dots,0), $$
    \item If $\alpha_i=0, \ 1\leq i\leq n$ then we have ${\bf TP}_0(0,\dots,0)$ algebra.
\end{itemize} \end{proof}

Now we consider the transposed $1$-Poisson algebra structures on null-filiform associative algebras.

\begin{theorem}\label{1TPA} Let $(\mu_0^n, \cdot, [-,-])$ be a transposed $1$-Poisson algebra structure defined on the associative  algebra $\mu_0^n$. Then, the multiplication of $(\mu_0^n, \cdot,[-,-])$ has the following form:

$${\bf TP}_1(\alpha_2,\dots,  \alpha_n):
\left\{\begin{array}{ll}
e_i\cdot e_j=e_{i+j}, &  2\leq i+j \leq n, \\[1mm][e_1,e_i]=\sum\limits_{t=i}^{n}\alpha_{t-i+2}e_t, & 2\leq i\leq n.\\[1mm]
\end{array}\right.$$

\end{theorem}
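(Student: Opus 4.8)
\section*{Proof proposal for Theorem~\ref{1TPA}}

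The plan is to start from the bracket relations already supplied by Corollary~\ref{deltavalue} in the case $\delta=1$, namely $[e_1,e_i]=\sum_{t=i-1}^{n}\alpha_{t-i+2}e_t$ for $2\leq i\leq n$ (working throughout in the regime $n\geq 5$ where that corollary applies), and to strengthen it in two directions: compute every remaining bracket $[e_i,e_j]$ with $i,j\geq 2$, and establish the extra vanishing $\alpha_1=0$. This last point is precisely what replaces the lower summation limit $t=i-1$ of the corollary by the limit $t=i$ appearing in the statement.

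First I would substitute the triple $\{e_{i-1},e_1,e_j\}$ (with $i,j\geq 2$) into the defining identity~\eqref{eq:dualp} at $\delta=1$. Using $e_{i-1}\cdot e_1=e_i$ and $e_{i-1}\cdot e_j=e_{i+j-1}$ (read as $0$ once the index exceeds $n$), this gives $[e_i,e_j]=e_{i-1}\cdot[e_1,e_j]-[e_1,e_{i+j-1}]$. Inserting the corollary formula and reindexing, both terms on the right-hand side equal $\sum_{s=i+j-2}^{n}\alpha_{s-i-j+3}e_s$ whenever $i+j-1\leq n$, so they cancel and $[e_i,e_j]=0$ in that range. For $i+j\geq n+3$ both terms vanish outright. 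The only surviving contribution is the boundary case $i+j=n+2$, where $[e_1,e_{i+j-1}]=[e_1,e_{n+1}]=0$ and a single term of $e_{i-1}\cdot[e_1,e_j]$ remains, yielding $[e_i,e_j]=\alpha_1 e_n$.

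The hard part will be squeezing $\alpha_1=0$ out of this boundary case, and the key observation is that the relation $[e_i,e_j]=\alpha_1 e_n$ holds for every pair with $i+j=n+2$, in particular for both orderings of a single off-diagonal pair. Applying it to $\{e_2,e_n\}$ and to $\{e_n,e_2\}$ gives $[e_2,e_n]=\alpha_1 e_n$ and $[e_n,e_2]=\alpha_1 e_n$ simultaneously; since the Lie bracket is antisymmetric, $[e_n,e_2]=-[e_2,e_n]$, so $\alpha_1 e_n=-\alpha_1 e_n$ and hence $\alpha_1=0$. Here $n\geq 5$ is what guarantees $2\neq n$, so that the pair is genuinely off-diagonal and the argument is not vacuous.

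Once $\alpha_1=0$, the $t=i-1$ term drops out of the corollary's expression for $[e_1,e_i]$, leaving $[e_1,e_i]=\sum_{t=i}^{n}\alpha_{t-i+2}e_t$, and every boundary bracket $\alpha_1 e_n$ collapses to $0$, so $[e_i,e_j]=0$ for all $i,j\geq 2$. To finish I would check, exactly as in the proof of Theorem~\ref{0TPA}, that this data is a genuine Lie algebra: since each $[e_1,e_i]$ lies in $\mathrm{span}(e_i,\dots,e_n)$ and all brackets among $e_2,\dots,e_n$ vanish, every Jacobiator reduces to a combination of brackets $[e_t,e_s]$ with $t,s\geq 2$ and is therefore zero. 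Hence the multiplication is exactly ${\bf TP}_1(\alpha_2,\dots,\alpha_n)$.
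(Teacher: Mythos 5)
Your proof is correct, and it diverges from the paper's at the decisive step. The paper first specializes the transposed identity to the triples $\{e_{i-1},e_1,e_2\}$ to obtain $[e_i,e_2]=0$, then runs an induction on $j$ via $0=e_1\cdot[e_i,e_j]=[e_{i+1},e_j]+[e_i,e_{j+1}]$ to kill all brackets $[e_i,e_j]$ with $i,j\geq 2$, and only at the end extracts $\alpha_1=0$ from the Jacobi identity on $\{e_1,e_2,e_n\}$, which produces the quadratic relation $\alpha_1(\alpha_1 e_{n-1}+\alpha_2 e_n)=0$. You instead evaluate the whole family $\{e_{i-1},e_1,e_j\}$ in one stroke, getting the closed form $[e_i,e_j]=e_{i-1}\cdot[e_1,e_j]-[e_1,e_{i+j-1}]$, whose two terms cancel for $i+j\leq n+1$ and leave the boundary value $[e_i,e_j]=\alpha_1 e_n$ exactly when $i+j=n+2$; antisymmetry applied to the two orderings $[e_2,e_n]=[e_n,e_2]=\alpha_1 e_n$ then forces $\alpha_1=0$ by a linear argument, with Jacobi checked only a posteriori (and your verification is sound, since $[e_1,e_i]\subseteq\mathrm{span}(e_i,\dots,e_n)$ and all brackets among $e_2,\dots,e_n$ vanish). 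Your route buys two things. First, $\alpha_1=0$ comes from skew-symmetry alone rather than from the quadratic Jacobi relation. Second, it surfaces a boundary case the paper's write-up actually glosses over: the paper asserts $[e_i,e_2]=0$ for all $2\leq i\leq n$, but at $i=n$ its own equation yields precisely your $[e_n,e_2]=\alpha_1 e_n$, which vanishes only once $\alpha_1=0$ is known — your proof turns this silent gap into the engine of the argument. What the paper's two-step induction buys in exchange is lighter bookkeeping, avoiding the reindexing needed for your closed-form cancellation.
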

\begin{proof} Let $(\mu_0^n, \cdot, [-,-])$ be a transposed $1$-Poisson algebra. We consider the identity (\ref{eq:dualp}) for the triple $\{e_{i-1}, e_1, e_2\}, \ 2\leq i\leq n+1$:
$$e_{i-1} \cdot [e_1,e_{2}]= [e_{i-1}\cdot e_1,e_2]+[e_1,e_{i-1} \cdot e_2],$$
or
\begin{equation}\label{eqstep2}[e_i,e_2]+[e_1,e_{i+1}]=\sum\limits_{t=i}^{n}\alpha_{t-i+1}e_t.\end{equation}
From Corollary (\ref{deltavalue}), we deduce
$[e_i,e_2]=0$ for $2\leq i\leq n.$

Applying induction and the identity (\ref{eq:dualp}) for $ 2\leq i,j\leq n$ we establish:
\begin{equation}\label{1eq5}
[e_i,e_j]=0.\quad \quad\end{equation}

We will prove this equality by induction on the value of $j$. For $j=2$, the expression $[e_i,e_2]$,  is valid for $\ 2\leq i\leq n$. Assuming that $[e_i,e_j]=0$, holds for $2\leq i\leq n$, we will prove the equality for $j+1$. We can write
$$0=e_{1} \cdot [e_{i},e_{j}]= [e_{1}\cdot e_{i},e_j]+[e_i,e_{1} \cdot e_j]=[e_{i+1},e_j]+[e_i,e_{j+1}]=[e_{i},e_{j+1}].$$

Thus, $[e_{i},e_j]=0$, for $\ 2\leq j\leq n.$  We have, therefore, proven the equality \eqref{1eq5}.

Next, applying the Jacobi identity to $\{e_1, e_2, e_n\}$, we get:
$$0=[[e_1,e_2],e_{n}]+[[e_2,e_{n}],e_1]+[[e_{n},e_1],e_2]=$$
$$=[\sum\limits_{t=1}^{n}\alpha_{t}e_t,e_{n}]-[\sum\limits_{t=n-1}^{n}\alpha_{t-n+2}e_t,e_2]=\alpha_1(\alpha_1e_{n-1}+\alpha_2e_n).$$
From the last equality, we obtain the relation $\alpha_1=0$.

Hence, we obtain the transposed $1$-Poisson algebras ${\bf TP}_1(\alpha_2,\dots, \alpha_n)$ given by the following multiplications:

$$\left\{\begin{array}{ll} e_i\cdot e_j = e_{i+j} , & 2 \leq  i + j \leq n, \\[1mm]
[e_1,e_i]=\sum\limits_{t=i}^{n}\alpha_{t-i+2}e_t,&2\leq i\leq n.\\[1mm]
\end{array}\right.$$
\end{proof}

The following theorem establishes a necessary and sufficient condition for two algebras in the family ${\bf TP}_1(\alpha_2,\dots, \alpha_n)$ to be isomorphic.

\begin{theorem} Let ${\bf TP}_1(\alpha_2,\dots, \alpha_n)$ and ${\bf TP}'_1(\alpha_2', \dots,   \alpha_n')$ are isomorphic algebras. Then there exists an automorphism $\varphi$ between these algebras such that the following relation holds for $2\leq t\leq n$:

\begin{equation}\label{1eq1}
    \sum\limits_{i=2}^t\sum\limits_{k_1  +\cdots+k_i=t} A_{k_1}... A_{k_i}\alpha_i' =\sum\limits_{j=2}^{t} \sum\limits_{k_1+k_2=t-j+2}A_1 A_{k_1} A_{k_2} \alpha_{j}.
\end{equation}
\end{theorem}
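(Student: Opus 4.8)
The plan is to follow the template of the proof of Theorem~\ref{0lemm}, adapted to the larger Lie part of the $1$-Poisson family. First I would note that any isomorphism $\varphi\colon {\bf TP}'_1(\alpha_2',\dots,\alpha_n')\to {\bf TP}_1(\alpha_2,\dots,\alpha_n)$ restricts to an isomorphism of the underlying commutative associative algebras, both of which equal $(\mu_0^n,\cdot)$; hence $\varphi$ is an automorphism of $\mu_0^n$ and, by Theorem~\ref{3.1}, has the explicit form
$$\varphi(e_1)=\sum_{i=1}^n A_i e_i,\qquad \varphi(e_i)=\sum_{j=i}^n\sum_{k_1+\cdots+k_i=j}A_{k_1}\cdots A_{k_i}\,e_j,\quad 2\le i\le n,$$
with $A_1\neq 0$. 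Writing $e_i'=\varphi(e_i)$, the whole identity will come from the single compatibility condition associated with the generator $[e_1,e_2]$, since by Theorem~\ref{1TPA} all brackets of a ${\bf TP}_1$ algebra are determined by $[e_1,e_2]$ (the brackets $[e_i,e_j]$ with $i,j\ge 2$ vanish, and each $[e_1,e_i]$ is an index shift of $[e_1,e_2]$).

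For the left-hand side I would use that $\varphi$ is a homomorphism: since $[e_1,e_2]=\sum_{i=2}^n\alpha_i' e_i$ in the source algebra, applying $\varphi$ and expanding each $e_i'$ gives
$$[e_1',e_2']=\sum_{i=2}^n\alpha_i' e_i'=\sum_{t=2}^n\Big(\sum_{i=2}^t\sum_{k_1+\cdots+k_i=t}A_{k_1}\cdots A_{k_i}\,\alpha_i'\Big)e_t,$$
after collecting the coefficient of each $e_t$; this is exactly the left-hand side of~(\ref{1eq1}). For the right-hand side I would compute $[e_1',e_2']$ directly in the target algebra. The key simplification is that $e_2'$ has no $e_1$-component and that all brackets $[e_i,e_j]$ with $i,j\ge 2$ vanish, so only the $A_1 e_1$ term of $e_1'$ contributes and
$$[e_1',e_2']=A_1\sum_{j=2}^n\sum_{k_1+k_2=j}A_{k_1}A_{k_2}\,[e_1,e_j]=A_1\sum_{j=2}^n\sum_{k_1+k_2=j}A_{k_1}A_{k_2}\sum_{t=j}^n\alpha_{t-j+2}e_t.$$
Collecting the coefficient of $e_t$ gives $A_1\sum_{j=2}^t\sum_{k_1+k_2=j}A_{k_1}A_{k_2}\,\alpha_{t-j+2}$, and the substitution $j\mapsto t-j+2$ in the inner summation index rewrites it as $\sum_{j=2}^t\sum_{k_1+k_2=t-j+2}A_1 A_{k_1}A_{k_2}\,\alpha_j$, which is the right-hand side of~(\ref{1eq1}). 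Equating the coefficients of $e_t$ for each $2\le t\le n$ then yields the asserted relation.

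I expect the only real obstacle to be bookkeeping. Relative to the $\delta=0$ case of Theorem~\ref{0lemm}, the new feature is that $[e_1,e_j]=\sum_{t=j}^n\alpha_{t-j+2}e_t$ is nonzero for every $j$ and spreads over many basis vectors, so the single factor $\sum_{k_1+k_2=2}A_1A_{k_1}A_{k_2}$ appearing there is replaced by a full convolution of the $A$'s against the $\alpha$'s. Keeping the two index shifts consistent (one from the composition $k_1+k_2=j$, one from $t-j+2$), and carrying out the reindexing $j\mapsto t-j+2$ so that the final expression appears in the symmetric form of~(\ref{1eq1}) rather than the equivalent form written with $\alpha_{t-j+2}$, is the part that requires care.
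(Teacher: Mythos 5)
Your proposal is correct and takes essentially the same route as the paper's proof: both expand $[e_1',e_2']$ in two ways using the automorphism form from Theorem \ref{3.1}, observe that only the $A_1e_1$ component of $e_1'$ contributes because $[e_i,e_j]=0$ for $i,j\geq 2$, and then reindex $j\mapsto t-j+2$ to bring the right-hand side into the form of Eq.~(\ref{1eq1}). Your explicit justification that the isomorphism restricts to an automorphism of $(\mu_0^n,\cdot)$ is a point the paper leaves implicit, but otherwise the two arguments coincide step for step.
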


\begin{proof}
Using the automorphism of the algebra $\mu_0^n$ from Theorem \ref{3.1}, we introduce the following notations:

\[
e_1'=\varphi(e_1), \quad
e_i'=\varphi(e_i),\quad 2\leq i\leq n.
\]

Thus, we consider
 $$\begin{array}{lcl}
    [e_1',e_2'] &= &\sum\limits_{i=2}^{n}\alpha_{i}'e_i'=\sum\limits_{i=2}^n \alpha_i'\sum\limits_{j=i}^n\sum\limits_{k_1 +... +k_i=j} A_{k_1}...A_{k_i}e_j \\[1mm]
    &=& \sum\limits_{i=2}^n \sum\limits_{j=i}^n \sum\limits_{k_1+\dots +k_i=j}\alpha_i' A_{k_1} ...A_{k_i}e_j=\sum\limits_{t=2}^n \sum\limits_{i=2}^t\sum\limits_{k_1  +\cdots+k_i=t} \alpha_i' A_{k_1} ...A_{k_i}e_t.  \\
        \end{array}$$

On the other hand, we have
 $$\begin{array}{lcl}
 [e_1',e_2']&=&[\sum\limits_{i=1}^n A_i e_i, \sum\limits_{j=2}^n\sum\limits_{k_1+k_2=j} A_{k_1} A_{k_2} e_j]=[A_{1} e_1,\sum\limits_{j=2}^n  \sum\limits_{k_1+k_2=j}A_{k_1}A_{k_2}e_j]\\[1mm]
 &=&\sum\limits_{j=2}^n \sum\limits_{k_1+k_2=j} A_1 A_{k_1} A_{k_2}[e_1,e_j] \\[1mm]
&=&\sum\limits_{j=2}^n \sum\limits_{k_1+k_2=j} A_1 A_{k_1} A_{k_2}\sum\limits_{t=j}^n \alpha_{t-j+2} e_t\\[1mm]
&=&\sum\limits_{t=2}^n \sum\limits_{j=2}^{t}\sum\limits_{k_1+k_2=j} A_1 A_{k_1} A_{k_2} \alpha_{t-j+2} e_t.\\[1mm]
\end{array}$$

Comparing the coefficients of the obtained expressions for the basis elements for $2\leq t\leq n$, we get the following restrictions:
$$\sum\limits_{i=2}^t\sum\limits_{k_1  +\cdots+k_i=t} A_{k_1}... A_{k_i}\alpha_i'=\sum\limits_{j=2}^{t} \sum\limits_{k_1+k_2=j}A_1 A_{k_1} A_{k_2} \alpha_{t-j+2}.$$

If we write the values of the parameter $\alpha$ on the right-hand side of the equation in increasing order, we obtain the following equality.

$$\sum\limits_{i=2}^t\sum\limits_{k_1  +\cdots+k_i=t} A_{k_1}... A_{k_i}\alpha_i'=\sum\limits_{j=2}^{t} \sum\limits_{k_1+k_2=t-j+2}A_1 A_{k_1} A_{k_2} \alpha_{j}.$$
\end{proof}

\begin{lemma}\label{1lem9} Let ${\bf TP}_1(0,\dots,0, \alpha_s, \dots,\alpha_n),$ with $\alpha_s\neq0,$  be a transposed $1$-Poisson algebra defined above. Then, there exists $A\in\mathbb{C}$ such that the relation $\alpha_{s}'=\alpha_{s}A^{3-s}$ holds for any $2\leq s\leq n.$
\end{lemma}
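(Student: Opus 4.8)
The plan is to specialize the isomorphism criterion (\ref{1eq1}) to the index $t = s$ and to identify the unique surviving monomials on each side. Since the algebra is presented in the normalized form ${\bf TP}_1(0,\dots,0,\alpha_s,\dots,\alpha_n)$ with $\alpha_s \neq 0$, we have $\alpha_i = 0$ for $2 \le i < s$. The first task is to record that the image algebra inherits the same leading index, i.e. $\alpha_i' = 0$ for $2 \le i < s$. I would obtain this by a short induction on $t$ running from $2$ to $s-1$: for each such $t$ the right-hand side of (\ref{1eq1}) vanishes because every $\alpha_j$ appearing there has $j \le t < s$, while on the left only the term $i = t$ survives (by the inductive hypothesis $\alpha_i' = 0$ for $i < t$), and the constraint $k_1 + \cdots + k_t = t$ with each $k_\ell \ge 1$ forces all $k_\ell = 1$, leaving $A_1^t \alpha_t'$. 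Since $A_1 \neq 0$, this gives $\alpha_t' = 0$.

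Next I would put $t = s$ in (\ref{1eq1}). On the left the outer sum collapses to the single term $i = s$, and $k_1 + \cdots + k_s = s$ with each $k_\ell \ge 1$ forces $k_1 = \cdots = k_s = 1$, so the left-hand side equals exactly $A_1^s \alpha_s'$. On the right only $j = s$ contributes (all smaller $\alpha_j$ vanish by hypothesis), and the constraint $k_1 + k_2 = s - s + 2 = 2$ forces $k_1 = k_2 = 1$, so the right-hand side equals $A_1 \cdot A_1 \cdot A_1 \cdot \alpha_s = A_1^3 \alpha_s$.

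Equating the two sides yields $A_1^s \alpha_s' = A_1^3 \alpha_s$, and since $A_1 \neq 0$ we may divide to obtain $\alpha_s' = A_1^{3-s}\alpha_s$. Setting $A = A_1$ then completes the argument. This mirrors exactly the computation already carried out in the transposed $0$-Poisson case, the only change being that the right-hand monomial is $A_1^3$ rather than the $0$-Poisson analogue.

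The computation itself is short; the point requiring care, and the main obstacle, is the combinatorial bookkeeping of which compositions $(k_1,\dots,k_i)$ of $s$ (respectively $(k_1,k_2)$ of $2$) are admissible. The key observation that makes everything collapse is that a composition of an integer $m$ into exactly $m$ positive parts must be the all-ones composition; this is what kills every monomial except $A_1^s$ on the left and $A_1^3$ on the right. I would also take care to genuinely justify the inheritance $\alpha_i' = 0$ for $i < s$ by the induction above rather than assuming it, since without it the left-hand side at $t = s$ could a priori contain lower-order contributions that would spoil the clean relation.
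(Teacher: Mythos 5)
Your proposal is correct and follows essentially the same route as the paper's proof: specialize relation (\ref{1eq1}) to $t=s$, observe that all compositions collapse to the all-ones composition so the left side reduces to $A_1^s\alpha_s'$ and the right side to $A_1^3\alpha_s$, and divide by $A_1^s\neq 0$. The only difference is that you justify the inheritance $\alpha_i'=0$ for $2\leq i<s$ by an explicit induction on $t$, which the paper merely asserts, so your write-up is if anything slightly more complete.
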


\begin{proof} Let ${\bf TP}_1(\alpha_2,\dots, \alpha_n)$ be a transposed $1$-Poisson algebra structure on $\mu_0^n$, and consider a general change of basis.  Let $\alpha_i=0$ for $2\leq i< s-1$ then $\alpha_i'=0$. Then, for some natural number $s$, we have the following restriction (\ref{1eq1}):
$$ \sum\limits_{i=2}^s\sum\limits_{k_1  +\cdots+k_i=s} A_{k_1}... A_{k_i}\alpha_i' =\sum\limits_{j=2}^{s}\sum\limits_{k_1+k_2=j}  A_1  A_{k_1} A_{k_2} \alpha_{s-j+2}$$
or
$$ \sum\limits_{k_1  +k_2=2}A_{k_1}A_{k_2}\alpha_{2}'+\sum\limits_{k_1  +\cdots+k_s=s} A_{k_1}... A_{k_s}\alpha_s' =\sum\limits_{k_1  +k_2=2}A_{1}A_{k_1}A_{k_2}\alpha_{2}+\sum\limits_{k_1+k_2=s}  A_1  A_{k_1} A_{k_2} \alpha_{s}.$$

For $s=2$, we derive $\alpha_2'=A_1\alpha_2$. In the next steps we will have the following 
$$ \sum\limits_{k_1  +\dots+k_s=s} A_{k_1} ... A_{k_s}\alpha_s'= \sum\limits_{k_1+k_2=2}A_1 A_{k_1} A_{k_2} \alpha_{s}.$$
From this equality, we obtain the relation $\alpha_s'=\alpha_{s}A_1^{3-s}.$
\end{proof}

\begin{lemma}\label{NL2} Let $\mathfrak{L}$ be the algebra ${\bf TP}_1(\alpha_2,0,\dots, 0, \alpha_s,\dots, \alpha_n)$ and $3\leq s \leq n+1$. If $\alpha_2\neq0, \  \alpha_s\neq0$ then it is isomorphic to the algebra ${\bf TP}_1(1,0, \dots, 0, \alpha_s,0, \dots, 0)$.
\end{lemma}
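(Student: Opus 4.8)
The plan is to mimic the two-stage normalization of Lemmas \ref{0lem2}--\ref{0lems} and \ref{1lem9}, the new feature being that the bracket now carries \emph{two} nonzero anchors: $\alpha_2$, which I normalize to $1$, and $\alpha_s$, which I keep free and use to drive the reduction of the tail $\alpha_{s+1},\dots,\alpha_n$. First I would rescale to arrange $\alpha_2=1$, and then eliminate the tail one index at a time by shifting $e_1$, always keeping $\alpha_2$ and $\alpha_s$ fixed.

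For the rescaling I would apply the diagonal automorphism $\varphi(e_1)=A_1e_1$, which by Theorem \ref{3.1} gives $e_i'=A_1^{\,i}e_i$; by Lemma \ref{1lem9} this sends $\alpha_t\mapsto A_1^{\,3-t}\alpha_t$, so choosing $A_1=\alpha_2^{-1}$ (legitimate since $\alpha_2\neq0$) yields $\alpha_2=1$, preserves $\alpha_3=\dots=\alpha_{s-1}=0$, and leaves $\alpha_s$ nonzero (merely rescaled), which I continue to denote $\alpha_s$. I would then eliminate the tail by induction on $t$ from $t=s+1$ to $t=n$: to clear position $t$ I use the shift $\varphi(e_1)=e_1+c\,e_{t-s+1}$, for which Theorem \ref{3.1} gives $\varphi(e_v)=\sum_{p}\binom{v}{p}c^{p}e_{v+p(t-s)}$. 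Since $[e_i,e_j]=0$ for $i,j\geq2$ by \eqref{1eq5}, only the $e_1$-component of $e_1'$ survives in the bracket, so that
\[
[e_1',e_2']=[e_1,e_2]+2c\,[e_1,e_{t-s+2}]+c^{2}[e_1,e_{2(t-s)+2}].
\]
Comparing the coefficient of $e_t$ on the two sides (after re-expanding the left-hand side in the basis $\{e_v'\}$), the anchor $\alpha_s$ enters the bracket term $2c\,[e_1,e_{t-s+2}]$ with weight $2$ and enters the basis-change term $\alpha_s\,[\varphi(e_s)]_{t}$ with weight $\binom{s}{1}=s$, so the coefficient of $c$ at position $t$ is $(2-s)\alpha_s$; the choice $c=\frac{\alpha_t}{(s-2)\alpha_s}$ then forces $\alpha_t'=0$.

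The hard part will be the triangularity bookkeeping that guarantees this induction does not undo earlier work. Concretely, I must check that for every $w<t$ the only bracket contributions occur at $w=t-s+2$ and $w=2(t-s)+2$ (each produced through $\alpha_2$) and that each of these is exactly matched, and hence cancelled, by the corresponding basis-change term coming from $\varphi(e_2)$; this is what keeps $\alpha_2=1$, $\alpha_3=\dots=\alpha_{s-1}=0$, the anchor $\alpha_s$, and the already-cleared entries $\alpha_{s+1}=\dots=\alpha_{t-1}=0$ all fixed at stage $t$. The essential nonvanishing $(s-2)\alpha_s\neq0$, which makes each step solvable, uses precisely the hypotheses $s\geq3$ and $\alpha_s\neq0$ (and is exactly what fails when $s=2$). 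Once the induction is complete we obtain $\alpha_{s+1}=\dots=\alpha_n=0$, so $\mathfrak{L}\cong{\bf TP}_1(1,0,\dots,0,\alpha_s,0,\dots,0)$; the boundary value $s=n+1$ corresponds to an empty tail and follows from the rescaling step alone.
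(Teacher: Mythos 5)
Your proposal is correct and takes essentially the same route as the paper: the same diagonal rescaling $e_i'=\alpha_2^{-i}e_i$ to normalize $\alpha_2=1$, followed by the same inductive shifts $e_1'=e_1+\frac{\alpha_{j+1}}{(s-2)\alpha_s}e_{j-s+2}$ (your $c\,e_{t-s+1}$ with $t=j+1$ and the identical coefficient $c=\frac{\alpha_t}{(s-2)\alpha_s}$) to clear the tail $\alpha_{s+1},\dots,\alpha_n$. Your explicit verification of the cancellations at positions $w<t$ via the $\varphi(e_2)$-terms, and of the weight count $(2-s)\alpha_s$ at position $t$, simply spells out bookkeeping that the paper's proof leaves implicit.
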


\begin{proof} By general substitution and according to Lemma \ref{1lem9}, we obtain the following equality.

$$\alpha_2'=A_1\alpha_2, \quad \alpha_s'=\frac{\alpha_s}{A_1^{s-3}}.$$
According to the theorem's condition,  $\alpha_2\neq 0$. If we perform the substitution $e_i'=\frac{1}{\alpha_2^i}e_i$, for $ \ 1\leq i\leq n$, then we obtain the relation $\alpha_2'=1$.

Next, consider another change of basis for $e_1:$ $$e_1'=e_1+\frac{\alpha_{s+1}}{(s-2)\alpha_s}e_2.$$  From the relations $e_i\cdot e_j=e_{i+j}$ for $2\leq i+j\leq n,$ we obtain
$$e_i'=\sum\limits_{t=0}^{i}\left(\begin{array}{cc}
     i  \\
     t
\end{array}\right)\left(\frac{\alpha_{s+1}}{(s-2)\alpha_s}\right)^te_{i+t}, \ \ 1\leq i\leq n,$$
where we assume that $e_t=0$ for $t>n$.
We conclude that $\alpha_{s+1}=0.$

Now, we prove by induction that it is possible to set $\alpha_j=0,$ for all $s+1\leq j\leq n$. The base case $j = s+1$ is already established. Assuming that the claim holds for some $j$, we show that it also holds for $j + 1$.  Consider the change of basis: $$e_1'=e_1+\frac{\alpha_{j+1}}{(s-2)\alpha_s}e_{j-s+2}.$$  Using $e_i\cdot e_j=e_{i+j}$ for $2\leq i+j\leq n,$ we derive
$$e_i'=\sum\limits_{t=0}^{i}\left(\begin{array}{cc}
     i  \\
     t
\end{array}\right)\left(\frac{\alpha_{j+1}}{(s-2)\alpha_s}\right)^te_{i+t(j-s+1)}, \ \ 1\leq i\leq n,$$
where we assume that $e_t=0$ for $t>n$ in this sum. We obtain that $\alpha_{j+1}=0.$

By induction, we have $\alpha_j=0,$ for all $s+1\leq j\leq n$, proving that the algebra $\mathfrak{L}$ is isomorphic to ${\bf TP}_1(1,0, \dots, 0, \alpha_s,0, \dots, 0)$, with the following multimplication rules:
$$\left\{\begin{array}{ll}
e_i\cdot e_j=e_{i+j}, &  2\leq i+j \leq n, \\[1mm]
[e_1,e_i]=e_i+\alpha e_{s+i-2}, &  2\leq i \leq n-s+2.\\[1mm]
\end{array}\right.$$ \end{proof}

\begin{lemma}\label{NL3} Let $\mathfrak{L}$ be the algebra ${\bf TP}_1(0,\alpha_3,\dots, \alpha_n)$. If $\alpha_3\neq0$ then it is isomorphic to the algebra ${\bf TP}_1(0,\alpha,0 , \dots, 0)$.
\end{lemma}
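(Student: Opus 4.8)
The plan is to reuse the elimination scheme of Lemmas \ref{0lem4} and \ref{NL2}, specialised to the situation in which the first nonvanishing constant is $\alpha_3$. Since $\alpha_2=0$ and $\alpha_3\neq 0$, I would first apply Lemma \ref{1lem9}: for $s=2$ it gives $\alpha_2'=A_1\alpha_2=0$, so the condition $\alpha_2=0$ is preserved under any admissible change of basis, while for $s=3$ it gives $\alpha_3'=\alpha_3A_1^{3-3}=\alpha_3$, so $\alpha_3$ is an isomorphism invariant and cannot be rescaled. This is exactly why the free parameter $\alpha:=\alpha_3$ must survive in the target algebra ${\bf TP}_1(0,\alpha,0,\dots,0)$, and it reduces the problem to annihilating the tail $\alpha_4,\dots,\alpha_n$ while keeping $\alpha_2=0$ and $\alpha_3$ fixed.

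For the base case I would eliminate $\alpha_4$ with the substitution $e_1'=e_1+c\,e_2$, which is an automorphism of $\mu_0^n$ (Theorem \ref{3.1} with $A_1=1$, $A_2=c$), so the transported bracket is automatically again a transposed $1$-Poisson structure. Writing $e_2'=(e_1')^2=e_2+2c\,e_3+c^2e_4$ and using bilinearity together with $[e_i,e_j]=0$ for $i,j\geq 2$, one gets $[e_1',e_2']=[e_1,e_2]+2c[e_1,e_3]+c^2[e_1,e_4]$. The essential point is that the new structure constants are read off in the \emph{new} basis $e_i'=\sum_{t=0}^{i}\binom{i}{t}c^t e_{i+t}$; comparing the coefficients of $e_3'$ and $e_4'$ yields $\alpha_3'=\alpha_3$ and $\alpha_4'=\alpha_4-c\,\alpha_3$, so the choice $c=\alpha_4/\alpha_3$ (legitimate because $\alpha_3\neq 0$) kills $\alpha_4$. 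This is the place where the hypothesis $\alpha_3\neq 0$ is used.

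The induction then runs exactly as in Lemma \ref{0lem4}: assuming $\alpha_4=\dots=\alpha_j=0$, I would apply the automorphism $e_1'=e_1+c\,e_{j-1}$, expand $e_i'=\sum_{t=0}^{i}\binom{i}{t}c^t e_{i+t(j-2)}$ with the convention $e_t=0$ for $t>n$, and choose $c=\alpha_{j+1}/\alpha_3$ so that the coefficient of $e_{j+1}'$ in $[e_1',e_2']$ vanishes. Iterating over $j=4,\dots,n-1$ removes the entire tail and delivers the normal form ${\bf TP}_1(0,\alpha,0,\dots,0)$ with $\alpha=\alpha_3$.

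The main obstacle is the indexing bookkeeping in the inductive step. Because the substitution $e_1'=e_1+c\,e_{j-1}$ translates basis vectors by $j-2$ places, I must verify that the lowest index at which it perturbs $[e_1',e_2']$ is precisely $j+1$: concretely, that $e_3'=e_3+3c\,e_{j+1}+\cdots$ skips the indices $4,\dots,j$, so that the already-normalised constants $\alpha_3,\dots,\alpha_j$ are left untouched and exactly one new constant $\alpha_{j+1}$ is adjusted. Confirming that the induction closes — that no previously annihilated coefficient is resurrected and that the coefficient of $e_3'$ stays equal to the fixed $\alpha$ at every stage — is the crux; once this gap is established, the determination of the scalar $c$ at each step is a routine linear computation.
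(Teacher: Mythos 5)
Your proposal is correct and follows essentially the same route as the paper: Lemma \ref{1lem9} gives the invariance $\alpha_3'=\alpha_3$, your base case uses the same substitution $e_1'=e_1+\frac{\alpha_4}{\alpha_3}e_2$ (your computed $\alpha_4'=\alpha_4-c\,\alpha_3$ matches the paper's choice of coefficient), and your inductive step uses $e_1'=e_1+\frac{\alpha_{j+1}}{\alpha_3}e_{j-1}$ exactly as in the paper. The indexing point you flag as the crux is in fact settled by your own expansion: since the translation is by $j-2\geq 2$ places, every correction term in $e_t'$ (for $t\geq 3$) and in $[e_1',e_2']$ first appears at index $j+1$, which is precisely the implicit content of the paper's binomial expansion $e_i'=\sum_{t=0}^{i}\binom{i}{t}\left(\frac{\alpha_{j+1}}{\alpha_3}\right)^t e_{i+t(j-2)}$.
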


\begin{proof}  By general substitution and according to Lemma \ref{1lem9}, we obtain $\alpha_3'=\alpha_3.$ Next, consider another change of basis $e_1'=e_1+\frac{\alpha_4}{\alpha_3}e_2.$  From the relations $e_i\cdot e_j=e_{i+j}$ for $2\leq i+j\leq n,$ we obtain
$$e_i'=\sum\limits_{t=0}^{i}\left(\begin{array}{cc}
     i  \\
     t
\end{array}\right)\left(\frac{\alpha_4}{\alpha_3}\right)^te_{i+t}, \ \ 1\leq i\leq n,$$
where we assume that $e_t=0$ for $t>n$.
We conclude that $\alpha_4'=0.$

Now, we prove by induction that it is possible to set $\alpha_j=0,$ for all $4\leq j\leq n$. The base case $j = 4$ is already established. Assuming that the claim holds for some $j$, we show that it also holds for $j + 1$.  Consider the change of basis: $$e_1'=e_1+\frac{\alpha_{j+1}}{\alpha_3}e_{j-1}.$$  Using $e_i\cdot e_j=e_{i+j}$ for $2\leq i+j\leq n,$ we derive
$$e_i'=\sum\limits_{t=0}^{i}\left(\begin{array}{cc}
     i  \\
     t
\end{array}\right)\left(\frac{\alpha_{j+1}}{\alpha_3}\right)^te_{i+t(j-2)}, \ \ 1\leq i\leq n,$$
where we assume that $e_t=0$ for $t>n$ in this sum. We have that $\alpha_{j+1}=0.$

By induction, we have $\alpha_j=0,$ for all $4\leq j\leq n$, proving that the algebra $\mathfrak{L}$ is isomorphic to ${\bf TP}_1(0, \alpha,0, \dots, 0)$, with the following multimplication rules:
$$\left\{\begin{array}{ll}
e_i\cdot e_j=e_{i+j}, &  2\leq i+j \leq n, \\[1mm]
[e_1,e_i]=\alpha e_{i+1}, &  2\leq i \leq n-1.\\[1mm]
\end{array}\right.$$
\end{proof}

\begin{lemma}\label{NL4} Let $\mathfrak{L}$ be the algebra ${\bf TP}_1(0,\dots,0,\alpha_s,\dots, \alpha_n)$ and $s\geq 4$. If $\alpha_s\neq0$, then it is
isomorphic to the algebra ${\bf TP}_1(0,0, \dots, 0, 1_s,0, \dots, 0)$.
\end{lemma}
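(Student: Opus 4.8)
The plan is to reach the normal form ${\bf TP}_1(0,\dots,0,1_s,0,\dots,0)$ in two stages: first rescale so that the leading coefficient $\alpha_s$ becomes $1$, and then successively eliminate the higher coefficients $\alpha_{s+1},\dots,\alpha_n$ by perturbing the generator $e_1$. This mirrors the strategy of Lemmas \ref{NL2} and \ref{NL3}, the difference being that here the scaling stage is available (unlike the case $s=3$) precisely because $s\geq 4$.

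For the first stage, since $s\geq 4$ we have $s-3\neq 0$, so the diagonal automorphism given by $e_i'=\alpha_s^{\,i/(s-3)}e_i$ is well defined. By Lemma \ref{1lem9} it sends $\alpha_s$ to $\alpha_s'=\alpha_s A_1^{3-s}=1$, and since such a diagonal change multiplies each $\alpha_t$ by a power of the scalar, the vanishing coefficients $\alpha_2=\cdots=\alpha_{s-1}=0$ are preserved. Hence we may assume $\alpha_s=1$ from now on.

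For the second stage I would induct on the index of the coefficient being removed. Assume $\alpha_s=1$ and $\alpha_{s+1}=\cdots=\alpha_j=0$ (the base case is $j=s$), and apply the change of basis $e_1'=e_1+\frac{\alpha_{j+1}}{s-2}e_{j-s+2}$, which is legitimate because $s\geq 4$ forces $s-2\neq 0$ and $j-s+2\geq 2$. Using $e_i\cdot e_j=e_{i+j}$ one expands $e_i'=\sum_{t=0}^{i}\binom{i}{t}\bigl(\tfrac{\alpha_{j+1}}{s-2}\bigr)^{t}e_{i+t(j-s+1)}$, with $e_t=0$ for $t>n$; since $[e_i,e_j]=0$ for $i,j\geq 2$ by \eqref{1eq5}, the only surviving contributions to $[e_1',e_2']$ come from the brackets $[e_1,e_i]$. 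Comparing the coefficient of $e_{j+1}$ on both sides — equivalently, reading off \eqref{1eq1} for $t=j+1$ with $A_1=1$ and $A_{j-s+2}=\tfrac{\alpha_{j+1}}{s-2}$ — gives the single relation $\alpha_{j+1}'=\alpha_{j+1}-(s-2)\,\tfrac{\alpha_{j+1}}{s-2}=0$.

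The step needing the most care is verifying that this perturbation does not revive any coefficient already set to zero, i.e. that $\alpha_s'=1$ and $\alpha_{s+1}'=\cdots=\alpha_j'=0$ are retained. The shift $e_{j-s+2}$ is chosen exactly so that the leading coefficient $\alpha_s=1$ feeds, through the quadratic factor $A_{k_1}A_{k_2}$ in \eqref{1eq1}, into position $j+1$ and no lower one: for $t<s$ both sides of \eqref{1eq1} vanish, for $t=s$ one obtains $\alpha_s'=\alpha_s=1$, and for $s<t\leq j$ the relevant index $(t-s)/(j-s+1)$ lies strictly between $0$ and $1$, hence is not a nonnegative integer, so no term in $\alpha_s'$ contributes and $\alpha_t'=0$ is preserved. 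Iterating this over $j=s,s+1,\dots,n-1$ removes every higher coefficient and yields the asserted algebra ${\bf TP}_1(0,\dots,0,1_s,0,\dots,0)$, whose Lie bracket reduces to $[e_1,e_i]=e_{s+i-2}$.
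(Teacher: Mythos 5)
Your proof is correct and follows essentially the same route as the paper's: the diagonal rescaling $e_i'=\alpha_s^{i/(s-3)}e_i$ to normalize $\alpha_s=1$ (valid since $s\geq 4$), followed by the inductive perturbations $e_1'=e_1+\frac{\alpha_{j+1}}{s-2}e_{j-s+2}$ with the same binomial expansion of $e_i'$. Your extra check that the already-eliminated coefficients $\alpha_{s+1}',\dots,\alpha_j'$ stay zero (via the non-integrality of $(t-s)/(j-s+1)$) is a detail the paper leaves implicit, but the argument is the same.
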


\begin{proof} By general substitution and according to Lemma \ref{1lem9}, we obtain the following equality

$$\alpha_s'=\frac{\alpha_s}{A_1^{s-3}}.$$
If we perform the substitution $e_i'=\sqrt[s-3]{\alpha_s^{i}}e_i$, for $ \ 1\leq i\leq n$, then we obtain the relation $\alpha_s'=1$.

Next, consider another change of basis for $e_1:$ $$e_1'=e_1+\frac{\alpha_{s+1}}{(s-2)}e_2.$$  From the relations $e_i\cdot e_j=e_{i+j}$ for $2\leq i+j\leq n,$ we obtain
$$e_i'=\sum\limits_{t=0}^{i}\left(\begin{array}{cc}
     i  \\
     t
\end{array}\right)\left(\frac{\alpha_{s+1}}{(s-2)}\right)^te_{i+t}, \ \ 1\leq i\leq n,$$
where we assume that $e_t=0$ for $t>n$.
We conclude that $\alpha_{s+1}=0.$

Now, we prove by induction that it is possible to set $\alpha_j=0,$ for all $s+1\leq j\leq n$. The base case $j = s+1$ is already established. Assuming that the claim holds for some $j$, we show that it also holds for $j + 1$.  Consider the change of basis: $$e_1'=e_1+\frac{\alpha_{j+1}}{(s-2)}e_{j-s+2}.$$  Using $e_i\cdot e_j=e_{i+j}$ for $2\leq i+j\leq n,$ we derive
$$e_i'=\sum\limits_{t=0}^{i}\left(\begin{array}{cc}
     i  \\
     t
\end{array}\right)\left(\frac{\alpha_{j+1}}{(s-2)}\right)^te_{i+t(j-s+1)}, \ \ 1\leq i\leq n,$$
where we assume that $e_t=0$ for $t>n$ in this sum. We conclude that $\alpha_{j+1}=0.$

By induction, we have $\alpha_j=0,$ for all $s+1\leq j\leq n$, proving that the algebra $\mathfrak{L}$ is isomorphic to ${\bf TP}_1(0,0, \dots, 0, 1_s,0, \dots, 0)$, with the following multimplication rules:
$$\left\{\begin{array}{ll}
e_i\cdot e_j=e_{i+j}, &  2\leq i+j \leq n, \\[1mm]
[e_1,e_i]=e_{s+i-2}, &  2\leq i \leq n+2-s.\\[1mm]
\end{array}\right.$$
\end{proof}

\begin{theorem} Let $(\mu_0^n, \cdot, [-,-])$ be a $1$-transposed Poisson algebra structure defined on the associative  algebra $\mu_0^n$. Then this algebra is isomorphic to one of the following pairwise non-isomorphic algebras:
\[{\bf TP}_1(1,0, \dots, 0, \alpha_s,0, \dots, 0), \ {\bf TP}_1(0,\alpha,0 , \dots, 0), \
{\bf TP}_1(0, \dots, 0, 1_p,0, \dots, 0),
 \ 3\leq s\leq n, \ 4\leq p\leq n, \ \alpha\in\mathbb{C}.\]
\end{theorem}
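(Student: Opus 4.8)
The plan is to assemble the classification from the normal-form reduction of Theorem \ref{1TPA} together with the structural Lemmas \ref{NL2}, \ref{NL3} and \ref{NL4}, organising the argument according to the position of the first nonzero structure constant. By Theorem \ref{1TPA} every transposed $1$-Poisson structure on $\mu_0^n$ is already of the form ${\bf TP}_1(\alpha_2,\dots,\alpha_n)$, so the task splits into two parts: reduce an arbitrary tuple $(\alpha_2,\dots,\alpha_n)$ to one of the listed normal forms, and then show that these normal forms are pairwise non-isomorphic.

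For the reduction I would branch on $s=\min\{i:\alpha_i\neq 0\}$. If $\alpha_2\neq 0$ then, letting $\alpha_s$ denote the first coefficient beyond $\alpha_2$ that does not vanish (and allowing $\alpha_s=0$ when no such coefficient exists), Lemma \ref{NL2} normalises $\alpha_2$ to $1$ and annihilates every coefficient past index $s$, yielding ${\bf TP}_1(1,0,\dots,0,\alpha_s,0,\dots,0)$ with $3\leq s\leq n$. If $\alpha_2=0$ but $\alpha_3\neq 0$, Lemma \ref{NL3} gives ${\bf TP}_1(0,\alpha,0,\dots,0)$. If the first nonzero coefficient sits at an index $s\geq 4$, Lemma \ref{NL4} produces ${\bf TP}_1(0,\dots,0,1_p,0,\dots,0)$ with $4\leq p\leq n$. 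Finally, if all $\alpha_i$ vanish we obtain the abelian-bracket algebra, which is recovered as the $\alpha=0$ member of the second family. These cases exhaust all tuples and give exactly the shapes listed in the statement.

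The harder half is the pairwise non-isomorphism, where I would extract invariants from the isomorphism criterion \eqref{1eq1}. The decisive invariant is the least index $m$ at which the bracket $[e_1,e_m]$ is nonzero: because the transition formulas \eqref{1eq1} are triangular in the $A_i$, with diagonal term $A_1^t\alpha_t'$ on the left and leading contribution $A_1^3\alpha_m$ on the right, and since $A_1\neq 0$ by Theorem \ref{3.1}, an induction on $t$ shows that no admissible change of basis can lower this index, so it is a genuine isomorphism invariant. Its value equals $2$ for the first family, $3$ for the second, and $p\geq 4$ for the third, separating the three families at once and distinguishing the members of the third family for different $p$. Within the first family the relation $\alpha_2'=A_1\alpha_2$ (the case $t=2$ of \eqref{1eq1}) forces $A_1=1$ once $\alpha_2$ is normalised to $1$, and then Lemma \ref{1lem9} gives $\alpha_s'=\alpha_s A_1^{3-s}=\alpha_s$; hence the pair $(s,\alpha_s)$ is an invariant and distinct pairs yield non-isomorphic algebras. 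In the second family Lemma \ref{1lem9} similarly gives $\alpha_3'=\alpha_3$, so the scalar $\alpha$ is a genuine modulus.

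I expect the main obstacle to be this non-isomorphism bookkeeping rather than the reduction: one must confirm that the invariants drawn from \eqref{1eq1} — the least nonzero index, the scale-fixed value $\alpha_2=1$, the secondary index $s$ with its surviving modulus $\alpha_s$, and the modulus $\alpha$ — together separate every pair of listed algebras, and in particular that the two-parameter first family never collapses onto or coincides with either single-parameter family. The crucial point is that an automorphism, necessarily with $A_1\neq 0$ by Theorem \ref{3.1}, can neither manufacture a nonzero $\alpha_2$ from a tuple with $\alpha_2=0$ nor destroy one, which is exactly what keeps the three families disjoint.
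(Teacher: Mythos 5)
Your proposal is correct and follows essentially the same route as the paper: after invoking Theorem~\ref{1TPA}, the paper's proof is exactly your case split on the first nonzero coefficient, citing Lemmas~\ref{NL2}, \ref{NL3} and \ref{NL4} (plus the trivial case). Your additional non-isomorphism bookkeeping via the triangularity of \eqref{1eq1} and Lemma~\ref{1lem9} is sound (modulo the slip that your invariant is the least $m$ with $\alpha_m\neq 0$, i.e.\ the leading index of $[e_1,e_2]$, not of $[e_1,e_m]$) and in fact supplies a verification that the paper's own proof leaves implicit.
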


\begin{proof} Let $(\mu_0^n, \cdot, [-,-])$ be a transposed $1$-Poisson algebra structure defined on the associative  algebra $\mu_0^n$. Then Theorem \ref{1TPA} implies that it is isomorphic to the algebra ${\bf TP}_1(\alpha_2,\dots, \alpha_n)$. Moreover,
\begin{itemize}
     \item If $\alpha_2\neq0, \ \alpha_3=\dots=\alpha_{s-1}=0$ and $\alpha_s\neq 0$ for $s\geq 3$, then from Lemma \ref{NL2} we have ${\bf TP}_1(1,0, \dots, 0, \alpha_s,0, \dots, 0)$ algebra;
    \item If $\alpha_2=0, \ \alpha_3\neq 0$ then from Lemma \ref{NL3} we have ${\bf TP}_1(0,\alpha,0 , \dots, 0)$ algebra;
    \item If $\alpha_2=\dots=\alpha_{p-1}=0, \ \alpha_p\neq 0, \ p\geq4$ then from Lemma \ref{NL4} we have the algebra
    $${\bf TP}_1(0, \dots, 0, 1_p,0, \dots, 0), \ 4\leq p\leq n;$$
    \item If $\alpha_i=0, \ 2\leq i\leq n$ then we have ${\bf TP}_1(0,\dots,0)$ algebra.
\end{itemize} \end{proof}

Now we consider transposed $\delta$-Poisson algebra structures on null-filiform associative algebras for $\delta\neq 0,1,2$.

\begin{theorem}\label{TP} Let $(\mu_0^n, \cdot, [-,-])$ be a transposed $\delta$-Poisson algebra structure defined on the associative  algebra $\mu_0^n, \ n\geq5$ and $\delta\neq0,1,2$. Then the multiplication of $(\mu_0^n, \cdot,[-,-])$ has the following form:

$${\bf TP}_{\delta}(\alpha_{n-2},\alpha_{n-1},  \alpha_n):
\left\{\begin{array}{ll} e_i\cdot e_j=e_{i+j},   &  \quad      2\leq i+j \leq n, \\[1mm]
[e_1,e_2]=\alpha_{n-2}e_{n-2}+\alpha_{n-1}e_{n-1}+\alpha_ne_n, \\[1mm]
[e_1,e_3]=\delta (\alpha_{n-2}e_{n-1}+\alpha_{n-1}e_n),    & \\[1mm] [e_2,e_3]=\frac{\delta^2-\delta}{2}\alpha_{n-2}e_n, &    \\[1mm] [e_1,e_4]=\frac{\delta^2+\delta}{2}\alpha_{n-2}e_n. & \\[1mm]
\end{array}\right.$$

\end{theorem}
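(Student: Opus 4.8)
The plan is to take Corollary~\ref{deltavalue} as the starting point and then pin down every remaining Lie product by feeding carefully chosen triples into the compatibility identity~(\ref{eq:dualp}). Since the hypothesis $\delta\neq 0,1,2$ is exactly the condition $\delta^3-3\delta^2+2\delta\neq 0$, Corollary~\ref{deltavalue} already delivers $\alpha_t=0$ for $1\leq t\leq n-3$ together with the explicit values of $[e_1,e_2]$, $[e_1,e_3]$, $[e_1,e_4]$ and the vanishing $[e_1,e_i]=0$ for $5\leq i\leq n$. Thus every product of the form $[e_1,e_\ast]$ is already known, and the only work left is to compute $[e_i,e_j]$ for $i,j\geq 2$.

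The key tool will be the recurrence obtained by applying~(\ref{eq:dualp}) to the triple $\{e_{i-1},e_1,e_j\}$, namely
$$\delta\, e_{i-1}\cdot[e_1,e_j]=[e_i,e_j]+[e_1,e_{i+j-1}],$$
which rearranges to
$$[e_i,e_j]=\delta\, e_{i-1}\cdot[e_1,e_j]-[e_1,e_{i+j-1}].$$
This expresses any bracket with $i\geq 2$ entirely through the already-determined $e_1$-products. I would then run the vanishing analysis: since $[e_1,e_m]\neq 0$ only for $m\in\{2,3,4\}$ and all such images lie in $\mathrm{span}(e_{n-2},e_{n-1},e_n)$, the first summand $e_{i-1}\cdot[e_1,e_j]$ can survive only when $j\in\{3,4\}$ and the index shift keeps it inside the algebra, while $[e_1,e_{i+j-1}]$ can survive only when $i+j-1\leq 4$. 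Taking $2\leq i<j$ (so $i+j\geq 5$) and tracking the admissible shifts, a short index count shows that both summands vanish for every pair except $(i,j)=(2,3)$, which forces $[e_i,e_j]=0$ in all other cases.

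For the surviving case I would substitute $i=2,\ j=3$, using $e_1\cdot[e_1,e_3]=\delta\alpha_{n-2}e_n$, to obtain
$$[e_2,e_3]=\delta\, e_1\cdot[e_1,e_3]-[e_1,e_4]=\delta^2\alpha_{n-2}e_n-\tfrac{\delta^2+\delta}{2}\alpha_{n-2}e_n=\tfrac{\delta^2-\delta}{2}\alpha_{n-2}e_n,$$
exactly the claimed product. Together with Corollary~\ref{deltavalue} and the given null-filiform products $e_i\cdot e_j=e_{i+j}$, this reproduces the full multiplication table in the statement.

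The final and most delicate point I would address is completeness: the recurrence above is extracted from the single family of triples $\{e_{i-1},e_1,e_j\}$, so a priori some other instance of~(\ref{eq:dualp}) could either contradict the reconstructed table or impose an extra relation among $\alpha_{n-2},\alpha_{n-1},\alpha_n$. I expect this to be the main obstacle, and I would settle it by observing that every nonzero product lands in $\mathrm{span}(e_{n-2},e_{n-1},e_n)$: for $n\geq 7$ this subspace is central, so all remaining instances of~(\ref{eq:dualp}) and the Jacobi identity hold automatically, whereas for the degenerate cases $n=5,6$ (where $e_{n-2}$ fails to be central through $[e_1,e_{n-2}]$) a short finite check of the few low-index triples confirms consistency and shows that the three parameters remain genuinely free.
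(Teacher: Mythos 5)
Your proof of the statement itself is correct and is essentially the paper's argument: both start from Corollary \ref{deltavalue}, obtain $[e_2,e_3]$ from the instance of (\ref{eq:dualp}) on the triple $\{e_1,e_1,e_3\}$ (which is exactly your recurrence at $(i,j)=(2,3)$), and annihilate all remaining brackets using the same one-parameter family of triples $\{e_{i-1},e_1,e_j\}$; the paper merely packages your index count as an induction on $i+j$. Since the theorem is a necessity statement ("then the multiplication has the following form"), this part fully proves it, exactly as in the paper.

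The one genuine flaw is in your closing "completeness" paragraph, which goes beyond both the statement and the paper's proof (the paper never verifies sufficiency here, though it tacitly relies on it later). Centrality of $\operatorname{span}(e_{n-2},e_{n-1},e_n)$ for $n\geq 7$ does make the Jacobi identity automatic, since every iterated bracket then vanishes; but it does \emph{not} make the remaining instances of (\ref{eq:dualp}) automatic, because that identity also involves the associative product, and multiplication into the top of the algebra never dies. Concretely, the instance $z=e_2$, $x=e_1$, $y=e_2$ — unused in your derivation, since you treated only $i<j$ and defined $[e_3,e_2]$ by antisymmetry — reads $\delta\, e_2\cdot[e_1,e_2]=[e_3,e_2]+[e_1,e_4]$, and here $e_2\cdot e_{n-2}=e_n\neq 0$ for \emph{every} $n\geq 5$; the two sides agree (both equal $\delta\alpha_{n-2}e_n$) only because the derived coefficients conspire via $-\frac{\delta^2-\delta}{2}+\frac{\delta^2+\delta}{2}=\delta$. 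So the nontrivial consistency checks persist for all $n$, not only $n=5,6$; they do all succeed, so your conclusion that $\alpha_{n-2},\alpha_{n-1},\alpha_n$ remain free is correct, but the "$n\geq 7$ is automatic by centrality" justification is wrong as stated and the check must be done by direct computation on the finitely many triples with $z\in\{e_1,e_2\}$ meeting a nonzero bracket.
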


\begin{proof} Let $(\mu_0^n, \cdot, [-,-])$ be a transposed $\delta$-Poisson algebra. Then, according to Corollary \ref{deltavalue}, we have the multiplications
 $$\begin{array}{lll}
                         [e_1,e_2]=\alpha_{n-2}e_{n-2}+\alpha_{n-1}e_{n-1}+\alpha_ne_n, & [e_1,e_4]=\frac{\delta^2+\delta}{2}\alpha_{n-2}e_n,&   \\[1mm] [e_1,e_3]=\delta (\alpha_{n-2}e_{n-1}+\alpha_{n-1}e_n),& [e_1,e_i]=0, &5\leq i\leq n.\end{array}$$

Now we check the identity (\ref{eq:dualp}) for the triple $\{e_1, e_1, e_3\}$:
$$\delta e_1\cdot[e_1,e_3]=[e_2,e_3]+[e_1,e_4].$$

From this, we derive the following product
$$[e_2,e_3]=\frac{\delta^2-\delta}{2}\alpha_{n-2}e_n.$$

Applying induction by $i+j$, we prove $[e_i,e_j]=0$ for $i+j\geq 6.$

Now, we consider the condition (\ref{eq:dualp}) for the  triple  $\{e_1, e_i, e_j\}$:
$$0=\delta e_i  \cdot [e_1,e_j]= [e_i\cdot e_1,e_j]+[e_1,e_i \cdot e_j]=[e_{i+1},e_j].$$

So we get $[e_{i},e_j]=0$ for $i+j\geq 6$. Thus, we get the proof of the theorem.

\end{proof}

The following theorem establishes a necessary and sufficient condition for two algebras in the family ${\bf TP}_{\delta}(\alpha_{n-2},\alpha_{n-1},  \alpha_n)$ to be isomorphic.

\begin{theorem}\label{M1} Let ${\bf TP}_{\delta}(\alpha_{n-2},\alpha_{n-1},\alpha_n)$  and ${\bf TP}'_{\delta}(\alpha_{n-2}',\alpha_{n-1}',\alpha_n')$ are isomorphic algebras. Then there exists an automorphism $\varphi$ between these algebras such that:
$$\begin{array}{l}
     \alpha_{n-2}'=\frac{\alpha_{n-2}}{A_1^{n-5}}, \quad \alpha_{n-1}'=\frac{A_1\alpha_{n-1}+A_2\alpha_{n-2}(2\delta-n+2)}{A_1^{n-3}},\\[1mm]
\alpha_n'=\frac{2\alpha_nA_1^2+2\alpha_{n-1}A_1A_2(2 \delta - n+1)+\alpha_{n-2}(A_1A_3(\delta^2+3\delta-2n+4)+A_2^2(3\delta^2+\delta(3-4n)+n^2-n-2))}{2A_1^{n-1}}.\end{array}$$
\end{theorem}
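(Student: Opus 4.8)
The plan is to use that any isomorphism between two members of the family ${\bf TP}_{\delta}$ is, in particular, an isomorphism of their common underlying associative algebra $\mu_0^n$, hence an automorphism $\varphi$ of $\mu_0^n$ of the form given in Theorem \ref{3.1}. Writing $e_i'=\varphi(e_i)$ and recalling from Theorem \ref{TP} that the whole Lie part of a ${\bf TP}_{\delta}$ algebra is determined by the single bracket $[e_1,e_2]$, it suffices to require that $\varphi$ intertwine the two Lie structures on the pair $(e_1,e_2)$, i.e.
$$\alpha_{n-2}'e_{n-2}'+\alpha_{n-1}'e_{n-1}'+\alpha_n'e_n'=[\varphi(e_1),\varphi(e_2)],$$
where the right-hand side is evaluated with the brackets of ${\bf TP}_{\delta}(\alpha_{n-2},\alpha_{n-1},\alpha_n)$. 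The three asserted relations then come from comparing the coefficients of $e_{n-2}$, $e_{n-1}$ and $e_n$ in this one identity.

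First I would expand the left-hand side. From Theorem \ref{3.1}, counting compositions of $n-2,n-1,n$ into the prescribed number of parts, one gets $e_n'=A_1^n e_n$, $e_{n-1}'=A_1^{n-1}e_{n-1}+(n-1)A_1^{n-2}A_2 e_n$ and $e_{n-2}'=A_1^{n-2}e_{n-2}+(n-2)A_1^{n-3}A_2 e_{n-1}+\big((n-2)A_1^{n-3}A_3+\binom{n-2}{2}A_1^{n-4}A_2^2\big)e_n$. Substituting these expressions gives the $e_{n-2}$-, $e_{n-1}$- and $e_n$-components of the left-hand side as explicit polynomials in $A_1,A_2,A_3$ and the primed parameters.

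Next I would expand the right-hand side as $[\varphi(e_1),\varphi(e_2)]=\sum_{i,j}A_i B_j[e_i,e_j]$, where $\varphi(e_2)=\sum_{j}B_j e_j$ with $B_j=\sum_{k_1+k_2=j}A_{k_1}A_{k_2}$, so that $B_2=A_1^2$, $B_3=2A_1A_2$ and $B_4=2A_1A_3+A_2^2$. By Theorem \ref{TP} the only nonzero brackets are $[e_1,e_2]$, $[e_1,e_3]$, $[e_1,e_4]$ and $[e_2,e_3]$ together with their antisymmetric versions, so only the index pairs $(1,2),(1,3),(1,4),(2,3),(3,2)$ survive, contributing the coefficients $A_1^3$, $2A_1^2A_2$, $A_1(2A_1A_3+A_2^2)$ and $(2A_1A_2^2-A_1^2A_3)$ for $[e_1,e_2],[e_1,e_3],[e_1,e_4]$ and $[e_2,e_3]$ respectively. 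Plugging in the explicit bracket values of Theorem \ref{TP} produces the three components of the right-hand side.

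Matching the $e_{n-2}$-coefficients gives $\alpha_{n-2}'A_1^{n-2}=A_1^3\alpha_{n-2}$, hence $\alpha_{n-2}'=\alpha_{n-2}A_1^{5-n}$; matching the $e_{n-1}$-coefficients and inserting this value isolates $\alpha_{n-1}'$ in the stated form. I expect the $e_n$-coefficient equation to be the only delicate step: after back-substituting the already-determined $\alpha_{n-2}'$ and $\alpha_{n-1}'$, one must collect the $A_1^2A_3\alpha_{n-2}$- and $A_1A_2^2\alpha_{n-2}$-terms arising from four distinct sources — the binomial coefficient $\binom{n-2}{2}$ in $e_{n-2}'$, the reinserted $\alpha_{n-1}'$, and the factors $\tfrac{\delta^2+\delta}{2}$ and $\tfrac{\delta^2-\delta}{2}$ in $[e_1,e_4]$ and $[e_2,e_3]$ — and verify that they combine into the coefficients $\delta^2+3\delta-2n+4$ and $3\delta^2+\delta(3-4n)+n^2-n-2$. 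This is a purely mechanical but bookkeeping-heavy simplification; everything else in the argument is linear and routine.
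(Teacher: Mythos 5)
Your proposal follows essentially the same route as the paper's proof: apply the automorphism of Theorem \ref{3.1}, compute $[e_1',e_2']$ in two ways using the bracket table of Theorem \ref{TP} (with exactly the surviving pairs $(1,2),(1,3),(1,4),(2,3),(3,2)$ and the same coefficients $A_1^3$, $2A_1^2A_2$, $A_1(2A_1A_3+A_2^2)$, $2A_1A_2^2-A_1^2A_3$), and compare the $e_{n-2}$-, $e_{n-1}$- and $e_n$-components. The final bookkeeping you defer does work out: the four sources you name combine to $\tfrac{1}{2}A_1^2A_3(\delta^2+3\delta-2n+4)$ and $\tfrac{1}{2}A_1A_2^2\bigl(3\delta^2+\delta(3-4n)+n^2-n-2\bigr)$, reproducing the stated formula for $\alpha_n'$.
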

\begin{proof}
Using the automorphism of the algebra $\mu_0^n$ from Theorem \ref{3.1}, we introduce the following notations:
$$e_1'=\sum\limits_{i=1}^{n} A_ie_i, \quad e_2'=\sum\limits_{i=2}^{n}\sum\limits_{k_1+k_2=i} A_{k_1}A_{k_2}e_i,$$
$$e_{n-2}'=A_1^{n-2}e_{n-2}+(n-2)A_1^{n-3}A_2e_{n-1}+\left(\frac{(n-2)(n-3)}{2}A_1^{n-4}A_2^2+(n-2)A_1^{n-3}A_3\right)e_n,$$
$$e_{n-1}'=A_1^{n-1}e_{n-1}+(n-1)A_1^{n-2}A_2e_n, \quad e_n'=A_1^ne_n.$$
Thus, we consider
$$[e_1',e_2']=\alpha_{n-2}'e_{n-2}'+\alpha_{n-1}'e_{n-1}'+\alpha_n'e_n'$$
$$=\alpha_{n-2}'\left(A_1^{n-2}e_{n-2}+(n-2)A_1^{n-3}A_2e_{n-1}+\left(\frac{(n-2)(n-3)}{2}A_1^{n-4}A_2^2+(n-2)A_1^{n-3}A_3\right)e_n\right)$$
$$+\alpha_{n-1}'(A_1^{n-1}e_{n-1}+(n-1)A_1^{n-2}A_2e_n)+ \alpha_n'A_1^ne_n.$$
On the other hand, we have

$$ [e_1',e_2']=[\sum\limits_{i=1}^{n} A_ie_i,\sum\limits_{i=2}^{n}\sum\limits_{k_1+k_2=i} A_{k_1}A_{k_2}e_i]$$
$$=A_1^3(\alpha_{n-2}e_{n-2}+\alpha_{n-1}e_{n-1}+\alpha_ne_n)+2\delta A_1^2A_2(\alpha_{n-2}e_{n-1}+\alpha_{n-1}e_n)$$
$$+\frac{\delta^2+\delta}{2}(2A_1^2A_3+A_1A_2^2)\alpha_{n-2}e_n+\frac{\delta^2-\delta}{2}(2A_1A_2^2-A_1^2A_3)\alpha_{n-2}e_n$$$$=
A_1^3\alpha_{n-2}e_{n-2}+(A_1^3\alpha_{n-1}+2\delta A_1^2A_2\alpha_{n-2})e_{n-1}$$
$$+\left(A_1^3\alpha_n+2\delta A_1^2A_2\alpha_{n-1}+\frac{\delta^2+\delta}{2}(2A_1^2A_3+A_1A_2^2)\alpha_{n-2}+\frac{\delta^2-\delta}{2}(2A_1A_2^2-A_1^2A_3)\alpha_{n-2}\right)e_n.$$

By comparing the coefficients of the corresponding basic elements, we obtain the expressions given in the lemma for $\alpha_{n-2}'$, $\alpha_{n-1}'$, and $\alpha_n'$.
\end{proof}

\begin{theorem} Let $(\mu_0^n, \cdot, [-,-])$ be a transposed $\delta$-Poisson algebra structure defined on the associative  algebra $\mu_0^n$ and $n\geq5$. Then this algebra is isomorphic to one of the following pairwise non-isomorphic algebras:
$${\bf TP}_{\delta}(1,0,0), \ {\bf TP}_{\delta}(0,1,0), \  {\bf TP}_{\delta}(0,0,1),\  $$
$${\bf TP}_{\frac{n-2}{2}}(1,\alpha,0), \
{\bf TP}_{\delta^2+3\delta=2n-4}(1,0,\alpha),\ {\bf TP}_{\frac{n-1}{2}}(0,1,\alpha), \ \alpha\in\mathbb{C}^{*}.$$
\end{theorem}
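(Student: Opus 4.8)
The plan is to combine the normal form of the structure theorem (Theorem~\ref{TP}) with the explicit action of the automorphism group recorded in Theorem~\ref{M1}, and then to run an orbit analysis on the triple $(\alpha_{n-2},\alpha_{n-1},\alpha_n)$. By Theorem~\ref{TP} every transposed $\delta$-Poisson structure (for $\delta\neq0,1,2$, $n\geq5$) is one of the algebras ${\bf TP}_{\delta}(\alpha_{n-2},\alpha_{n-1},\alpha_n)$, so it suffices to pick a representative in each orbit of the map $(\alpha_{n-2},\alpha_{n-1},\alpha_n)\mapsto(\alpha_{n-2}',\alpha_{n-1}',\alpha_n')$ given by the three formulas of Theorem~\ref{M1}. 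My first observation would be that the position of the leading nonzero coefficient is an isomorphism invariant: since $\alpha_{n-2}'=\alpha_{n-2}/A_1^{n-5}$ with $A_1\neq0$, the condition $\alpha_{n-2}=0$ is preserved, and when $\alpha_{n-2}=0$ the formula for $\alpha_{n-1}'$ collapses to $\alpha_{n-1}A_1^{4-n}$, so $\alpha_{n-1}=0$ is likewise preserved. This splits the argument into the three cases $\alpha_{n-2}\neq0$; $\alpha_{n-2}=0,\ \alpha_{n-1}\neq0$; and $\alpha_{n-2}=\alpha_{n-1}=0,\ \alpha_n\neq0$ (the all-zero case being the trivial abelian bracket).

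In each case I would first use the scaling parameter $A_1$ to normalize the leading coefficient to $1$, and then use $A_2$ and $A_3$ to annihilate the remaining coefficients. The decisive quantities appearing in Theorem~\ref{M1} are $2\delta-n+2$ (the coefficient of $A_2$ in $\alpha_{n-1}'$), $2\delta-n+1$ (the coefficient of $A_1A_2$ in $\alpha_n'$), and $\delta^2+3\delta-2n+4$ (the coefficient of $A_1A_3$ in $\alpha_n'$). When $\alpha_{n-2}\neq0$, if $\delta\neq\frac{n-2}{2}$ one kills $\alpha_{n-1}$ with $A_2$, and if moreover $\delta^2+3\delta\neq2n-4$ one kills $\alpha_n$ with $A_3$, giving ${\bf TP}_{\delta}(1,0,0)$; the degenerate value $\delta=\frac{n-2}{2}$ leaves $\alpha_{n-1}$ as a surviving modulus, producing ${\bf TP}_{\frac{n-2}{2}}(1,\alpha,0)$, while $\delta^2+3\delta=2n-4$ leaves $\alpha_n$ as a modulus, producing ${\bf TP}_{\delta}(1,0,\alpha)$. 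A one-line computation shows $\delta=\frac{n-2}{2}$ together with $\delta^2+3\delta=2n-4$ forces $(n-2)(n-4)=0$, hence cannot occur for $n\geq5$, so these two exceptional families never overlap. When $\alpha_{n-2}=0$, $\alpha_{n-1}\neq0$, one normalizes $\alpha_{n-1}=1$ and removes $\alpha_n$ with $A_2$ provided $\delta\neq\frac{n-1}{2}$, giving ${\bf TP}_{\delta}(0,1,0)$, whereas $\delta=\frac{n-1}{2}$ yields the family ${\bf TP}_{\frac{n-1}{2}}(0,1,\alpha)$; and when only $\alpha_n\neq0$ the single scaling gives ${\bf TP}_{\delta}(0,0,1)$.

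Finally I would establish pairwise non-isomorphism. The leading-position invariant separates the three blocks, and within a block each exceptional continuous family is pinned to the single isolated value of $\delta$ forced by the vanishing of the corresponding coefficient, so it cannot be confused with the one-point families ${\bf TP}_{\delta}(1,0,0)$, ${\bf TP}_{\delta}(0,1,0)$, ${\bf TP}_{\delta}(0,0,1)$. To separate members of a continuous family for distinct $\alpha$, I would feed the normalized representatives back into the formulas of Theorem~\ref{M1} and check that the automorphisms fixing the normalized leading coefficient act on the surviving parameter trivially (or at worst by a root of unity).

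The hard part will be the bookkeeping at the degenerate values of $\delta$: tracking precisely which of $A_2,A_3$ remain available after the earlier normalizations, and confirming that the surviving modulus is a genuine invariant rather than something removable by a cleverer automorphism. The most delicate point is the residual scaling, since after fixing $A_1$ to normalize the leading coefficient $A_1$ is only determined up to an $(n-5)$-th (respectively $(n-4)$-th) root of unity; one must check whether this leftover freedom identifies distinct values of $\alpha$. This is exactly where the strict pairwise non-isomorphism claim needs the most careful verification, and it is cleanest for small $n$.
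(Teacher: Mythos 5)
Your proposal follows essentially the same route as the paper's proof: invoke Theorem~\ref{TP} for the normal form, then use the automorphism action of Theorem~\ref{M1}, scaling the leading nonzero coefficient to $1$ with $A_1$ and eliminating the remaining coefficients with $A_2$ and $A_3$, with exactly the same case splits at $\delta=\frac{n-2}{2}$, $\delta^2+3\delta=2n-4$, and $\delta=\frac{n-1}{2}$. The two points you add beyond the paper --- the check that the loci $\delta=\frac{n-2}{2}$ and $\delta^2+3\delta=2n-4$ are disjoint for $n\geq5$ (via $(n-2)(n-4)\neq0$), and the residual root-of-unity freedom in $A_1$ after normalizing the leading coefficient, which could identify distinct values of $\alpha$ in the continuous families --- are legitimate subtleties that the paper passes over silently, so on the non-isomorphism claim you are, if anything, more careful than the published argument.
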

\begin{proof} Suppose we are given a transposed $\delta$-Poisson algebra ${\bf TP_\delta}(\alpha_{n-2},\alpha_{n-1}, \alpha_n)$. If we perform a general substitution  on this algebra, then from Theorem \ref{M1} we have the following equalities.

$$\begin{array}{l}
     \alpha_{n-2}'=\frac{\alpha_{n-2}}{A_1^{n-5}}, \quad \alpha_{n-1}'=\frac{A_1\alpha_{n-1}+A_2\alpha_{n-2}(2\delta-n+2)}{A_1^{n-3}},\\[1mm]
\alpha_n'=\frac{2\alpha_nA_1^2+2\alpha_{n-1}A_1A_2(2 \delta - n+1)+\alpha_{n-2}(A_1A_3(\delta^2+3\delta-2n+4)+A_2^2(3\delta^2+\delta(3-4n)+n^2-n-2))}{2A_1^{n-1}}.\end{array}$$

We can say that $\alpha_{n-2}$ is invariant for both zero and non-zero values, so we can divide $\alpha_{n-2}$ into the following cases based on the zero and non-zero values.

\begin{enumerate}
    \item[(1)] Let $\alpha_{n-2}\neq0$. Then we choose $A_1=\sqrt[n-5]{\alpha_{n-2}}$ and we have $\alpha_{n-2}'=1$. We apply the automorphism in Theorem \ref{3.1} once again to obtain the following  expressions:
$$\alpha_{n-1}'=\alpha_{n-1}+A_2(2\delta-n+2),$$
$$\alpha_n'=\frac{2\alpha_n+2\alpha_{n-1}A_2(2 \delta - n+1)+A_3(\delta^2+3\delta-2n+4)+A_2^2(3\delta^2+\delta(3-4n)+n^2-n-2)}{2}.$$
We obtain the following possible cases

\begin{enumerate}
    \item If $\delta=\frac{n-2}{2},$ then we derive
    $$\alpha_{n-1}'=\alpha_{n-1}, \quad
 \alpha_n'=
 \frac{8\alpha_n-8\alpha_{n-1}A_2+A_3(n^2-6n+8)-A_2(n^2-6n+8)}{8}.$$ Then, by choosing $A_2=0, \ A_3=-\frac{8\alpha_n}{n^2-6n+8},$ we have the alebra ${\bf TP}_{\frac{n-2}{2}}(1,\alpha,0)$.

\item If $\delta\neq\frac{n-2}{2},$ then, by putting $A_2=\frac{\alpha_{n-1}}{n-2-2\delta}$, we conclude that $\alpha_{n-1}'=0$. We once again use the automorphism in Theorem \ref{3.1} to obtain the following expression:
$$\alpha_n'=\frac{2\alpha_n+A_3(\delta^2+3\delta-2n+4)}{2}.$$

If $\delta^2+3\delta=2n-4$ then we have the algebra ${\bf TP}_{\delta^2+3\delta=2n-4}(1,0,\alpha)$.

If $\delta^2+3\delta\neq2n-4$ then we obtain the algebra ${\bf TP}_{\delta^2+3\delta\neq 2n-4}(1,0,0)$.
\end{enumerate}

 \item[(2)] Let $\alpha_{n-2}=0$.  Then, we get

$$\alpha_{n-2}'=0, \quad \alpha_{n-1}'=\frac{\alpha_{n-1}}{A_1^{n-4}}, \quad \alpha_n'=\frac{2\alpha_nA_1+2\alpha_{n-1}A_2(2 \delta - n+1)}{2A_1^{n-2}}.$$
In this case, $\alpha_{n-1}$  is invariant for zero and non-zero values, so we obtain the following possible cases.
\begin{enumerate}
     \item  $\alpha_{n-1}\neq 0$.
     Then, we choose $A_1=\sqrt[n-4]{\alpha_{n-1}}$ and we have $\alpha_{n-1}'=1$. We again apply the automorphism in Theorem \ref{3.1} to obtain the following expression:
$$\alpha_n'=\alpha_n+A_2(2 \delta - n+1).$$

If $\delta=\frac{n-1}{2}$, then we have ${\bf TP}_{\frac{n-1}{2}}(0,1,\alpha)$.
 
If $\delta\neq\frac{n-1}{2}$, then we have ${\bf TP}_{\delta\neq\frac{n-1}{2}}(0,1,0)$.

\item  If $\alpha_{n-1}=0$, then we derive $\alpha_n'=\frac{\alpha_n}{A_1^{n-3}}$ and choosing $A_1=\sqrt[n-3]{\alpha_{n}}$, we obtain the algebra ${\bf TP}_{\delta}(0,0,1)$.
\end{enumerate}
\end{enumerate}
\end{proof}

In the following theorems, we consider 2,3 and 4-dimensional cases. For $\delta\neq0$, in \cite{Bai} proved that any 2-dimensional complex transposed $\delta$-Poisson algebra on the 2-dimensional associative algebra whose product is given by $e_1\cdot e_1=e_2$ is isomorphic to one of the following transposed Poisson algebras:
$${\bf TP}_{\delta}(0,0): \ e_1\cdot e_1=e_2; \quad {\bf TP}_{\delta}(0,1): \ e_1\cdot e_1=e_2, \ [e_1,  e_2]=e_2, \ \delta\in \mathbb{C}\setminus\{0\}.$$

If $\delta = 0$, then we have a transposed $0$-Poisson algebra with the multiplication  
\[{\bf TP}_0: \ e_1 \cdot e_1 = e_2, \ [e_1, e_2] = \alpha_1 e_1 + \alpha_2 e_2.
\]  
Then, by applying the Theorem \ref{3.1}, we obtain the following expressions:
$$\alpha_1'=A_1^2\alpha_1, \ \alpha_2'=A_1\alpha_2-A_2\alpha_1.$$
By considering the possible values of the parameters $\alpha_1$ and $\alpha_2$, we obtain the following transposed $0$-Poisson algebras:
$${\bf TP}_{0}(0,0): \ e_1\cdot e_1=e_2; \quad {\bf TP}_{0}(0,1): \ e_1\cdot e_1=e_2, \ [e_1,  e_2]=e_2, \quad \ {\bf TP}_{0}(1,0): \ e_1\cdot e_1=e_2, \ [e_1,  e_2]=e_1.$$
Thus, we have obtained the following two-dimensional transposed $\delta$-Poisson algebras.
$${\bf TP}_{\delta}(0,0), \ {\bf TP}_{\delta}(0,1), \ {\bf TP}_{0}(1,0),\  \delta\in \mathbb{C}.$$

Now we consider the transposed $\delta$-Poisson algebra structures on the three-dimensional   null-filiform associative algebra $\mu_0^3$. In this case, we set 
$$[e_1,e_2]=\alpha_1 e_1+\alpha_2 e_2+\alpha_3 e_3.$$

By analyzing the identity (\ref{eq:dualp}) for the triples $\{e_1, e_1, e_2\}, \ \{e_1, e_1, e_3\} , \ \{e_2, e_1, e_2\}$ and also by considering Jacobi identity for triple $\{e_1, e_2, e_3\}$ we have 
\begin{equation}\label{dim=3}
    [e_1,e_2]=\alpha_1 e_1+\alpha_2 e_2+\alpha_3 e_3, \ [e_1,e_3]=\delta\alpha_{2}e_3, \ [e_2,e_3]=0, \ \delta\alpha_1=0.\end{equation}

\begin{theorem} Let $(\mu_0^3, \cdot, [-,-])$ be a transposed $\delta$-Poisson algebra structure defined on $\mu_0^3$ and $\delta\neq0$. Then it is isomorphic to one of the following pairwise non-isomorphic algebras:
\[{\bf TP}_{\delta}(0,1,0), \ {\bf TP}_{\delta}(0,0,\alpha),  \ {\bf TP}_{1}(0,1,\alpha), \ \alpha\in\mathbb{C}.\]
\end{theorem}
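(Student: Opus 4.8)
The plan is to reduce the statement to an orbit computation for a two–parameter family of brackets under the automorphism group of $\mu_0^3$. Since $\delta\neq0$, the condition $\delta\alpha_1=0$ appearing in (\ref{dim=3}) forces $\alpha_1=0$, so every transposed $\delta$-Poisson structure on $\mu_0^3$ is completely encoded by the pair $(\alpha_2,\alpha_3)$ via
\[
[e_1,e_2]=\alpha_2e_2+\alpha_3e_3,\qquad [e_1,e_3]=\delta\alpha_2e_3,\qquad [e_2,e_3]=0.
\]
Classifying these structures up to isomorphism is therefore the same as describing the orbits of $(\alpha_2,\alpha_3)$ under the action induced by the automorphisms $\varphi$ of $\mu_0^3$ recorded in Theorem \ref{3.1}.

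First I would determine how $(\alpha_2,\alpha_3)$ transforms. Specialising Theorem \ref{3.1} to $n=3$ and writing $e_i'=\varphi(e_i)$ with $\varphi(e_1)=A_1e_1+A_2e_2+A_3e_3$, $\varphi(e_2)=A_1^2e_2+2A_1A_2e_3$ and $\varphi(e_3)=A_1^3e_3$, I would expand $[e_1',e_2']$ by bilinearity using the bracket above and, on the other hand, write $[e_1',e_2']=\alpha_2'e_2'+\alpha_3'e_3'$. Comparing the coefficients of $e_2$ and $e_3$ should give
\[
\alpha_2'=A_1\alpha_2,\qquad \alpha_3'=\alpha_3+\frac{2(\delta-1)A_2\alpha_2}{A_1}.
\]
Note that $A_3$ drops out entirely in dimension three (it only multiplies brackets with $e_3$, all of which vanish), so only the scaling $A_1$ and the shift parameter $A_2$ are relevant.

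With these rules the classification splits according to whether $\alpha_2$ vanishes. If $\alpha_2\neq0$, the scaling $A_1=\alpha_2^{-1}$ normalises $\alpha_2'=1$; then for $\delta\neq1$ the shift coefficient $\delta-1$ is nonzero, so a suitable choice of $A_2$ sets $\alpha_3'=0$, yielding ${\bf TP}_{\delta}(0,1,0)$, whereas for $\delta=1$ the shift term vanishes identically and $\alpha_3$ survives, yielding the family ${\bf TP}_{1}(0,1,\alpha)$. If $\alpha_2=0$, then $\alpha_2'=0$ and $\alpha_3'=\alpha_3$, so $\alpha_3$ is invariant and we obtain ${\bf TP}_{\delta}(0,0,\alpha)$. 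Pairwise non-isomorphism follows from the same transformation rules: the vanishing of $\alpha_2$ is preserved because $\alpha_2'=A_1\alpha_2$ with $A_1\neq0$, so the $\alpha_2\neq0$ and $\alpha_2=0$ families never meet; and in each surviving one-parameter family $\alpha_3$ is a complete invariant (for ${\bf TP}_{1}(0,1,\alpha)$ keeping $\alpha_2'=1$ forces $A_1=1$, after which $\alpha_3'=\alpha_3$; for ${\bf TP}_{\delta}(0,0,\alpha)$ the invariance is automatic).

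The only genuine subtlety, which I regard as the main obstacle, is the careful bookkeeping of the degenerate value $\delta=1$: there the coefficient $\delta-1$ in the transformation of $\alpha_3$ collapses, and this is precisely what prevents $\alpha_3$ from being normalised away and forces the additional family ${\bf TP}_{1}(0,1,\alpha)$ to appear alongside ${\bf TP}_{\delta}(0,1,0)$. Once this degeneracy is isolated, the remaining work is a routine orbit analysis of the two scalars.
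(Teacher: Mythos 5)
Your proposal is correct and follows essentially the same route as the paper: starting from the reduction (\ref{dim=3}) with $\delta\alpha_1=0$ forcing $\alpha_1=0$, computing the induced action $\alpha_2'=A_1\alpha_2$, $\alpha_3'=\alpha_3+\frac{2(\delta-1)A_2\alpha_2}{A_1}$ of the automorphisms of Theorem \ref{3.1}, and splitting into the cases $\alpha_2\neq0$ versus $\alpha_2=0$ with the degenerate value $\delta=1$ producing the family ${\bf TP}_{1}(0,1,\alpha)$. Your explicit treatment of pairwise non-isomorphism via the invariance of $\alpha_2=0$ and of $\alpha_3$ is a small addition the paper leaves implicit, but the argument is otherwise the same.
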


\begin{proof} Let $(\mu_0^3, \cdot, [-,-])$ be a transposed $\delta$-Poisson algebra and $\delta\neq0$. Then, according to (\ref{dim=3}), we have the following
$$ {\bf TP}(0,\alpha_2,\alpha_3):
\ \begin{cases}
e_1\cdot e_1=e_2, \ e_1\cdot e_2=e_3, \ e_2\cdot e_1=e_3,\\[1mm]
[e_1,e_2]=\alpha_{2}e_2+\alpha_{3}e_3, \ [e_1,e_3]=\delta\alpha_{2}e_3.\\[1mm]
 \end{cases}$$
By applying the automorphism in Theorem \ref{3.1}, we get the following relations:
$$\alpha_2'=A_1\alpha_2, \ \alpha_3'=\frac{A_1\alpha_3+(2\delta-2)A_2\alpha_2}{A_1}.$$

\begin{itemize}
  \item[(1)] Let $\delta=1.$ Then, we have $\alpha_2'=A_1\alpha_2, \ \alpha_3'=\alpha_3.$ In this case we obtain the algebras ${\bf TP}_{1}(0,1,\alpha)$ and ${\bf TP}_{1}(0,0,\alpha)$, where $\alpha\in\mathbb{C}.$
  \item[(2)] Let $\delta\neq1.$ Then:

If $\alpha_2\neq 0$, then by choosing $A_1=\frac{1}{\alpha_2}, \ A_2=-\frac{\alpha_3}{(2\delta-2)\alpha_2^2},$
    we have the algebra ${\bf TP}_{\delta}(0,1,0)$.

If $\alpha_2=0$, then $\alpha_3'=\alpha_3$ and we derive the algebra ${\bf TP}_{\delta}(0,0,\alpha).$
\end{itemize}

\end{proof}

\begin{theorem} Let $(\mu_0^3, \cdot, [-,-])$ be a transposed $0$-Poisson algebra structure defined on $\mu_0^3$. Then, it is isomorphic to one of the following pairwise non-isomorphic algebras:
\[{\bf TP}_{0}(1,0,0), \ {\bf TP}_{0}(0,1,0),  \ {\bf TP}_{0}(0,0,\alpha), \ \alpha\in\mathbb{C}.\]
\end{theorem}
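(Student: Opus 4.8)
The plan is to classify all transposed $0$-Poisson algebra structures on $\mu_0^3$ by starting from the structural constraints in equation (\ref{dim=3}), then reducing the parameter space via the automorphism group of $\mu_0^3$. The key difference from the $\delta \neq 0$ case is that when $\delta = 0$ the constraint $\delta\alpha_1 = 0$ becomes vacuous, so $\alpha_1$ is now free to be nonzero, which forces me to handle an additional branch. From (\ref{dim=3}) with $\delta = 0$, I read off the bracket data as $[e_1,e_2] = \alpha_1 e_1 + \alpha_2 e_2 + \alpha_3 e_3$, $[e_1,e_3] = 0$, and $[e_2,e_3] = 0$, with no relation forcing $\alpha_1 = 0$.

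First I would record the action of a general automorphism $\varphi$ from Theorem \ref{3.1} on the three parameters. Writing $e_1' = \sum_i A_i e_i$ with $A_1 \neq 0$, $e_2' = A_1^2 e_2 + 2A_1 A_2 e_3$, and $e_3' = A_1^3 e_3$, I would compute $[e_1', e_2']$ in two ways and match coefficients of $e_1, e_2, e_3$. Since the only nonzero bracket is $[e_1,e_2]$ and brackets with $e_3$ vanish, the bracket $[e_1', e_2']$ collapses to $A_1 A_1^2 [e_1,e_2] = A_1^3(\alpha_1 e_1 + \alpha_2 e_2 + \alpha_3 e_3)$ plus contributions from the $e_2'$ expansion that hit $e_1$; the cross-terms involving $A_2 e_3$ die because $[e_1, e_3] = 0$. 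Carefully expanding $[A_1 e_1 + A_2 e_2 + A_3 e_3,\; A_1^2 e_2 + 2A_1 A_2 e_3]$ and using the bracket table, I expect transformation laws of the shape $\alpha_1' = A_1^2 \alpha_1$, together with laws for $\alpha_2'$ and $\alpha_3'$ analogous to the two-dimensional computation already displayed in the excerpt (which gave $\alpha_1' = A_1^2\alpha_1$, $\alpha_2' = A_1\alpha_2 - A_2\alpha_1$); the $e_3$-coefficient will yield $\alpha_3'$ in terms of $\alpha_3$, $\alpha_1$, $\alpha_2$ and the $A_i$.

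Next I would split into cases according to which of $\alpha_1, \alpha_2$ is the first nonzero parameter, exactly as in the two-dimensional analysis. If $\alpha_1 \neq 0$, the law $\alpha_1' = A_1^2 \alpha_1$ lets me normalize $\alpha_1' = 1$, and then I would use the freedom in $A_2$ (and $A_3$) to kill $\alpha_2$ and $\alpha_3$, landing on ${\bf TP}_0(1,0,0)$. If $\alpha_1 = 0$ but $\alpha_2 \neq 0$, then $\alpha_1$ stays zero under any automorphism, I normalize $\alpha_2' = 1$ via $A_1$, and use $A_2$ to annihilate $\alpha_3$, giving ${\bf TP}_0(0,1,0)$. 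If $\alpha_1 = \alpha_2 = 0$ and $\alpha_3 \neq 0$, only scaling of $\alpha_3$ survives, producing the one-parameter family ${\bf TP}_0(0,0,\alpha)$ with $\alpha \in \mathbb{C}^{*}$; absorbing the trivial case $\alpha_3 = 0$ into this family at $\alpha = 0$ yields ${\bf TP}_0(0,0,\alpha)$ for $\alpha \in \mathbb{C}$.

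The main obstacle I anticipate is verifying pairwise non-isomorphism and, in particular, confirming that the $\alpha$ in ${\bf TP}_0(0,0,\alpha)$ is a genuine modulus rather than scalable to $1$. Because $\alpha_3$ sits in the one-dimensional space $\mu_0^3 \cdot \mu_0^3 \cdot \mu_0^3 = \langle e_3\rangle$ and transforms by a single power of $A_1$, one might naively think it can be normalized; I would check whether the scaling needed to set $\alpha_3 = 1$ conflicts with earlier normalizations—here, since $\alpha_1 = \alpha_2 = 0$ leaves $A_1$ fully free, $\alpha_3$ \emph{can} in fact be rescaled, so I must be careful to state whether the family genuinely collapses to a single algebra or whether the invariant that distinguishes members is something else. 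Resolving this tension correctly (and reconciling it with the stated theorem, which lists ${\bf TP}_0(0,0,\alpha)$ as a family) is the delicate point; I would settle it by exhibiting the precise invariant preserved by all automorphisms and checking it separates the three normal forms and parametrizes the last family.
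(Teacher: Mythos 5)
Your setup follows the paper's route exactly: read the bracket data from \eqref{dim=3} with $\delta=0$ (so $[e_1,e_2]=\alpha_1e_1+\alpha_2e_2+\alpha_3e_3$, $[e_1,e_3]=[e_2,e_3]=0$, and $\alpha_1$ unconstrained), push the parameters through the automorphism of Theorem \ref{3.1}, and split on the first nonzero parameter. The laws you anticipate, $\alpha_1'=A_1^2\alpha_1$ and $\alpha_2'=A_1\alpha_2-A_2\alpha_1$, agree with the paper, and your handling of the cases $\alpha_1\neq0$ (kill $\alpha_2,\alpha_3$ via $A_2,A_3$, reach ${\bf TP}_0(1,0,0)$) and $\alpha_1=0,\alpha_2\neq0$ (normalize via $A_1$, kill $\alpha_3$ via $A_2$, reach ${\bf TP}_0(0,1,0)$) is correct.

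However, your final paragraph contains a genuine error at precisely the delicate point you flag. You assert that $\alpha_3$ ``transforms by a single power of $A_1$'' and that, since $\alpha_1=\alpha_2=0$ leaves $A_1$ free, ``$\alpha_3$ \emph{can} in fact be rescaled.'' Both claims are false: the computation you left undone gives, as in the paper,
\[
\alpha_3'=\frac{A_1^2\alpha_3+(2A_2^2-A_1A_3)\alpha_1-2A_1A_2\alpha_2}{A_1^2},
\]
which for $\alpha_1=\alpha_2=0$ reduces to $\alpha_3'=\alpha_3$ — an absolute invariant, not a scalable modulus. The reason is a cancellation of weights: $[e_1',e_2']=A_1\cdot A_1^2\,[e_1,e_2]=A_1^3[e_1,e_2]$ while $e_3'=A_1^3e_3$, so the coefficient of $e_3$ is fixed (this is the $s=3$ fixed point of the general law $\alpha_s'=A_1^{3-s}\alpha_s$ from the paper's Lemma following Theorem \ref{0lemm}). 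Your stated heuristic would collapse the family ${\bf TP}_0(0,0,\alpha)$, $\alpha\neq0$, to the single algebra ${\bf TP}_0(0,0,1)$, contradicting the theorem's claim of pairwise non-isomorphism. You did hedge by saying you would ``exhibit the precise invariant,'' but the reasoning you supply points in the wrong direction; the fix is simply to carry out the $e_3$-coefficient comparison, after which $\alpha_3$ itself is the invariant that parametrizes the last family and separates it from the other two normal forms.
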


\begin{proof} Let $(\mu_0^3, \cdot, [-,-])$ be a transposed $0$-Poisson algebra. Then, according to (\ref{dim=3}), we have the following
$$ {\bf TP}(\alpha_1,\alpha_2,\alpha_3):
\ \begin{cases}
e_1\cdot e_1=e_2, \ e_1\cdot e_2=e_3, \ e_2\cdot e_1=e_3,\\[1mm]
[e_1,e_2]=\alpha_1e_1+\alpha_{2}e_2+\alpha_{3}e_3.\\[1mm]
 \end{cases}$$
By applying the automorphism in Theorem \ref{3.1}, we get the following relations:
$$\alpha_1'=A_1^2\alpha_1, \ \alpha_2'=A_1\alpha_2-A_2\alpha_1, \ \alpha_3'=\frac{A_1^2\alpha_3+(2A_2^2-A_1A_3)\alpha_1-2A_1A_2\alpha_2}{A_1^2}.$$

By checking all possible cases of the parameters $\alpha_1, \alpha_2$ and $\alpha_3$, we obtain the above algebras.
\end{proof}

\begin{corollary} Let $(\mu_0^3, \cdot, [-,-])$ be a transposed $\delta$-Poisson algebra structure defined on $\mu_0^3$. Then it is isomorphic to one of the following pairwise non-isomorphic algebras:
\[{\bf TP}_{0}(1,0,0), \ {\bf TP}_{\delta}(0,1,0), \ {\bf TP}_{\delta}(0,0,\alpha),  \ {\bf TP}_{1}(0,1,\alpha), \ \alpha\in\mathbb{C}.\]
\end{corollary}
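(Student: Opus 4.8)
The plan is to assemble the statement directly from the two preceding theorems, which already settle the cases $\delta \neq 0$ and $\delta = 0$ separately, and then to check that the merged list is pairwise non-isomorphic. First I would recall that the theorem for $\delta \neq 0$ produces exactly the families ${\bf TP}_\delta(0,1,0)$, ${\bf TP}_\delta(0,0,\alpha)$, and ${\bf TP}_1(0,1,\alpha)$, while the theorem for $\delta = 0$ produces ${\bf TP}_0(1,0,0)$, ${\bf TP}_0(0,1,0)$, and ${\bf TP}_0(0,0,\alpha)$. The key observation is that ${\bf TP}_0(0,1,0)$ and ${\bf TP}_0(0,0,\alpha)$ are precisely the $\delta = 0$ members of the families ${\bf TP}_\delta(0,1,0)$ and ${\bf TP}_\delta(0,0,\alpha)$, so the union of the two lists collapses to the four families displayed in the corollary, with ${\bf TP}_0(1,0,0)$ being the only genuinely $\delta$-specific extra algebra: it can occur only for $\delta = 0$, since the relation $\delta\alpha_1 = 0$ in \eqref{dim=3} forces $\alpha_1 = 0$ whenever $\delta \neq 0$.

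The remaining task is pairwise non-isomorphism across the combined list. Within each fixed $\delta$ this is already guaranteed by the two source theorems, so I would concentrate on distinguishing members living in different families or carrying different values of $\delta$. For the family ${\bf TP}_\delta(0,1,0)$ with $\delta \neq 0$, the operator $\mathrm{ad}_{e_1}$ acts diagonalizably on $\mathfrak{L}\cdot\mathfrak{L} = \langle e_2, e_3\rangle$ with eigenvalues $1$ and $\delta$ attached to the filtration pieces $\mathfrak{L}^2/\mathfrak{L}^3$ and $\mathfrak{L}^3$; since any isomorphism respects this filtration and scales $\mathrm{ad}_{e_1}$ by a single constant, the ratio $\delta$ is an invariant, so distinct values of $\delta$ give non-isomorphic algebras, while the case $\delta = 0$ (where $[\mathfrak{L},\mathfrak{L}]$ drops to dimension one) is separated by $\dim[\mathfrak{L},\mathfrak{L}]$. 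The same operator distinguishes these from ${\bf TP}_1(0,1,\alpha)$ with $\alpha \neq 0$, where $\mathrm{ad}_{e_1}$ is a nontrivial Jordan block on $\langle e_2, e_3\rangle$ rather than a diagonalizable map.

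To separate ${\bf TP}_0(1,0,0)$ from the family ${\bf TP}_\delta(0,0,\alpha)$ with $\alpha \neq 0$ — both of which have nilpotent $\mathrm{ad}_{e_1}$ — I would compare the derived subalgebra with the square of the associative algebra: for ${\bf TP}_0(1,0,0)$ one has $[\mathfrak{L},\mathfrak{L}] = \langle e_1\rangle \not\subseteq \mathfrak{L}\cdot\mathfrak{L}$, whereas for ${\bf TP}_\delta(0,0,\alpha)$ one has $[\mathfrak{L},\mathfrak{L}] = \langle e_3\rangle \subseteq \mathfrak{L}\cdot\mathfrak{L}$; since an isomorphism of transposed $\delta$-Poisson algebras preserves both the associative product and the bracket, these cannot be isomorphic. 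I expect this cross-family separation to be the only delicate point, because the $\delta$-label is not itself an invariant for ${\bf TP}_\delta(0,0,\alpha)$ (its bracket is independent of $\delta$), so one must appeal to the interaction with the associative structure rather than to the $\delta$-Poisson identity to finish the argument.
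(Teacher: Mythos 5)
Your proposal is correct and follows exactly the route the paper intends: the corollary is stated without proof immediately after the two theorems classifying the cases $\delta\neq 0$ and $\delta=0$ on $\mu_0^3$, so its proof is precisely the merge you describe, with ${\bf TP}_0(0,1,0)$ and ${\bf TP}_0(0,0,\alpha)$ absorbed as the $\delta=0$ members of the families ${\bf TP}_{\delta}(0,1,0)$ and ${\bf TP}_{\delta}(0,0,\alpha)$. Your additional invariant checks (the eigenvalue ratio of $\operatorname{ad}_{e_1}$ on $\mathfrak{L}^3$ versus $\mathfrak{L}^2/\mathfrak{L}^3$ detecting $\delta$ in ${\bf TP}_{\delta}(0,1,0)$, the Jordan-block criterion separating ${\bf TP}_1(0,1,\alpha)$, the comparison of $[\mathfrak{L},\mathfrak{L}]$ with $\mathfrak{L}\cdot\mathfrak{L}$ isolating ${\bf TP}_0(1,0,0)$, and the observation that the bracket of ${\bf TP}_{\delta}(0,0,\alpha)$ does not depend on $\delta$) are all sound and supply the cross-$\delta$ non-isomorphism verification that the paper leaves entirely implicit.
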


Now we consider the transposed $\delta$-Poisson algebra structures on the four-dimensional   null-filiform associative algebra $\mu_0^4$. In this case, we set 
$$[e_1,e_2]=\alpha_1 e_1+\alpha_2 e_2+\alpha_3 e_3+\alpha_4 e_4.$$

By analyzing the identity (\ref{eq:dualp}) for the triples 
$$\{e_1, e_1, e_2\}, \ \{e_1, e_1, e_3\} , \ \{e_2, e_1, e_2\}, \ \{e_1, e_1, e_4\}, \{e_3, e_1, e_2\}$$ we have $\delta\alpha_1=0$ and 
$${\bf TP}(\alpha_1, \alpha_{2},\alpha_{3},  \alpha_4):
\begin{cases}
e_i\cdot e_j=e_{i+j},   &2\leq i+j \leq 4, \\[1mm]
[e_1,e_2]=\alpha_{1}e_{1}+\alpha_{2}e_{2}+\alpha_{3}e_{3}+\alpha_3e_3, \\[1mm]
[e_1,e_3]=\delta (\alpha_{2}e_{3}+\alpha_{3}e_4), \\[1mm] [e_2,e_3]=\frac{\delta^2-\delta}{2}\alpha_{2}e_4, \\[1mm] [e_1,e_4]=\frac{\delta^2+\delta}{2}\alpha_{2}e_4. \\[1mm]
\end{cases}$$

\begin{theorem}\label{deltadim4} Let $(\mu_0^4, \cdot, [-,-])$ be a transposed $\delta$-Poisson algebra structure defined on $\mu_0^4$ and $\delta\neq0$. Then it is isomorphic to one of the following pairwise non-isomorphic algebras:
$${\bf TP}_{1}(0, 1,\alpha,0), \ {\bf TP}_{1}(0, 1,0, \alpha),$$
\[{\bf TP}_{\delta}(0,1,0,0), \ {\bf TP}_{\delta}(0,0,\alpha,0), {\bf TP}_{\delta}(0,0,0,1), {\bf TP}_{-4}(0,1,0,\alpha),
{\bf TP}_{\frac{3}{2}}(0,0,\alpha,1)\ \alpha\in\mathbb{C}.\]
\end{theorem}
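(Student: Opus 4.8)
The plan is to start from the explicit three-parameter normal form established immediately before the statement: since $\delta\neq0$, the relation $\delta\alpha_1=0$ forces $\alpha_1=0$, so every transposed $\delta$-Poisson structure on $\mu_0^4$ is one of the algebras ${\bf TP}_\delta(0,\alpha_2,\alpha_3,\alpha_4)$. The classification up to isomorphism is then obtained by applying a general automorphism $\varphi$ of $\mu_0^4$ from Theorem \ref{3.1} (with parameters $A_1\neq0,A_2,A_3,A_4$) and recording how the triple $(\alpha_2,\alpha_3,\alpha_4)$ transforms.

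First I would derive the transformation formulas. Computing $[\varphi(e_1),\varphi(e_2)]$ and re-expressing it in the basis $\varphi(e_2),\varphi(e_3),\varphi(e_4)$ is formally identical to the computation in the proof of Theorem \ref{M1}: the only nonzero brackets are $[e_1,e_2],[e_1,e_3],[e_1,e_4],[e_2,e_3]$, and, together with the expansions of $\varphi(e_i)$, these coincide with the case $n\geq5$. Hence the formulas of Theorem \ref{M1}, specialized to $n=4$, remain valid here for \emph{every} $\delta\neq0$; explicitly,
\[
\alpha_2'=A_1\alpha_2,\qquad \alpha_3'=\frac{A_1\alpha_3+2(\delta-1)A_2\alpha_2}{A_1},
\]
\[
\alpha_4'=\frac{2A_1^2\alpha_4+2(2\delta-3)A_1A_2\alpha_3+(\delta-1)(\delta+4)A_1A_3\alpha_2+(\delta-1)(3\delta-10)A_2^2\alpha_2}{2A_1^3}.
\]
The key feature to record is that the coefficients of the ``shifting'' parameters factor through $(\delta-1)$, $(\delta+4)$, $(2\delta-3)$ and $(3\delta-10)$, so that the distinguished values $\delta=1$, $\delta=-4$ and $\delta=\frac{3}{2}$ are exactly those at which a normalization step becomes unavailable.

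Then I would run the case analysis driven by these formulas. Because $\alpha_2'=A_1\alpha_2$, the vanishing of $\alpha_2$ is invariant, giving two main branches. If $\alpha_2\neq0$, normalize $\alpha_2=1$; for $\delta\neq1$ one kills $\alpha_3$ using $A_2$ and then kills $\alpha_4$ using $A_3$ provided $\delta\neq-4$, yielding ${\bf TP}_\delta(0,1,0,0)$, whereas at $\delta=-4$ the entry $\alpha_4$ survives as a modulus, giving ${\bf TP}_{-4}(0,1,0,\alpha)$; for $\delta=1$ the entry $\alpha_3$ is rigid, so one only normalizes $\alpha_4$ (killing it with $A_2$ when $\alpha_3\neq0$, otherwise leaving it free), producing ${\bf TP}_1(0,1,\alpha,0)$ and ${\bf TP}_1(0,1,0,\alpha)$. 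If $\alpha_2=0$, then $\alpha_3$ is scale-invariant; when $\alpha_3\neq0$ one kills $\alpha_4$ with $A_2$ for $\delta\neq\frac{3}{2}$ (giving ${\bf TP}_\delta(0,0,\alpha,0)$), while at $\delta=\frac{3}{2}$ the factor $(2\delta-3)$ vanishes and $\alpha_4$ can only be scaled, giving ${\bf TP}_{3/2}(0,0,\alpha,1)$; and when $\alpha_3=0$ one scales $\alpha_4$ to reach ${\bf TP}_\delta(0,0,0,1)$ (or the trivial algebra).

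Finally, for pairwise non-isomorphism I would note that $\delta$ and the vanishing pattern of $(\alpha_2,\alpha_3,\alpha_4)$ are invariants, and that in each one-parameter family the surviving $\alpha$ equals a scale-invariant combination of structure constants (e.g.\ $\alpha_3$, of weight $0$, or the weight-$0$ product $\alpha_2\alpha_4$), hence is unchanged by the residual automorphisms fixing the already-normalized entries. I expect the main obstacle to be the bookkeeping in the $\alpha_4'$ formula: keeping the four contributions, their $\delta$-dependent coefficients, and the sign from $[e_3,e_2]=-[e_2,e_3]$ straight, since it is precisely the vanishing loci of those coefficients that force the exceptional algebras at $\delta=1,-4,\frac{3}{2}$.
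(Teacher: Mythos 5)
Your proposal is correct and follows essentially the same route as the paper: the same reduction via $\delta\alpha_1=0$ to the normal form ${\bf TP}_{\delta}(0,\alpha_2,\alpha_3,\alpha_4)$, the same transformation formulas (your factored coefficients $2(\delta-1)$, $(\delta-1)(\delta+4)$ and $(\delta-1)(3\delta-10)$ agree with the paper's $2\delta-2$, $\delta^2+3\delta-4$ and $3\delta^2-13\delta+10$), and the identical case analysis isolating the exceptional values $\delta=1,-4,\frac{3}{2}$. The only cosmetic differences are that you obtain the formulas by specializing Theorem \ref{M1} to $n=4$ (a valid step, since the bracket pattern and the expansions of $\varphi(e_i)$ coincide) where the paper computes them directly, and that you sketch the non-isomorphism via scale-invariants, which the paper leaves implicit.
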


\begin{proof} Let $(\mu_0^4, \cdot, [-,-])$ be a transposed Poisson algebra. According to the above, this algebra is isomorphic to the algebra ${\bf TP}_{\delta}(0, \alpha_2,\alpha_3,\alpha_4)$. Applying a change of basis of Theorem \ref{3.1}, we get the following relations:
$$\alpha_2'=A_1\alpha_2, \ \alpha_3'=\frac{A_1\alpha_3+(2\delta-2)A_2\alpha_2}{A_1}, $$
$$\alpha_4'=\frac{2A_1^2 \alpha_4+2(2\delta-3) A_1 A_2 \alpha_3+((\delta^2+3\delta-4)A_1A_3+(3\delta^2-13\delta+10)A_2^2)\alpha_2}{2A_1^3}.$$
\begin{enumerate}
    \item[(1)] Let $\delta=1$. Then "
    $$\alpha_2'=A_1\alpha_2, \ \alpha_3'=\alpha_3, \ \alpha_4'=\frac{A_1\alpha_4-A_2\alpha_3}{A_1^2}.$$
\begin{enumerate}
  \item If $\alpha_2\neq0$ and $\alpha_3\neq0,$ then by putting $A_1=\frac{1}{\alpha_2}, \ A_2=-\frac{\alpha_4}{\alpha_3},$ we have the algebra ${\bf TP}_{1}(0, 1,\alpha,0).$  
 \item If $\alpha_2\neq0$ and $\alpha_3=0,$ then by putting $A_1=\frac{1}{\alpha_2},$ we obtain the algebra ${\bf TP}_{1}(0, 1,0, \alpha).$ 
 \item If $\alpha_2=0$ and $\alpha_3\neq0,$ then by choosing $A_2=\frac{A_1\alpha_4}{\alpha_3},$ we obtain the algebra ${\bf TP}_{1}(0, 0, \alpha, 0).$ 
 \item If $\alpha_2=\alpha_3=0$ and $\alpha_4\neq0,$ then by choosing $A_1=\alpha_4,$ we obtain the algebra ${\bf TP}_{1}(0, 0, 0, 1).$ 
 \item If $\alpha_2=\alpha_3=\alpha_4=0,$ then we have the algebra ${\bf TP}_{1}(0, 0, 0, 0).$
\end{enumerate}

  \item[(2)] Let $\delta\neq1$. Then:
    \begin{enumerate}
    \item $\alpha_2\neq 0$, then by choosing $A_1=\frac{1}{\alpha_2}, \ A_2=-\frac{\alpha_3}{(2\delta-2)\alpha_2^2},$
    we have  $\alpha_2'=1,\ \alpha_3'=0.$ We apply the automorphism in Theorem \ref{3.1} once again to obtain the following:
    $$\alpha_4'=\frac{2\alpha_4+(\delta^2+3\delta-4)A_3}{2}.$$
    
    If $\delta\neq-4$, we put $A_3=-\frac{2\alpha_4}{\delta^2+3\delta-4},$ then we obtain the algebra ${\bf TP}_{\delta\neq -4}(0, 1,0,0)$;

   If $\delta=-4$, then we derive the algebra ${\bf TP}_{-4}(0,1,0,\alpha)$;

    \item Let $\alpha_2=0$ and $\alpha_3\neq 0.$ Then, we have
    $$\alpha_3'=\alpha_3, \ \alpha_4'=\frac{A_1\alpha_4+(2\delta-3)A_2 \alpha_3}{A_1^2}.$$

If $\delta\neq\frac{3}{2}$, choosing $A_1=1, A_2=\frac{\alpha_4}{(3-2\delta)\alpha_3}$ then we derive the algebra ${\bf TP}_{\delta\neq \frac{3}{2}}(0, 0,\alpha,0),$;

If $\delta=\frac{3}{2}$, then we have
    $$\alpha_3'=\alpha_3, \ \alpha_4'=\frac{\alpha_4}{A_1}.$$
For zero or nonzero values of parameter $\alpha_4$, we derive the algebras ${\bf TP}_{\frac{3}{2}}(0,0,\alpha,0)$ and  ${\bf TP}_{\frac{3}{2}}(0,0,\alpha,1)$, respectively.

\item Let $\alpha_2=\alpha_3=0$ and $\alpha_4\neq 0$. Then, by setting $A_1=\alpha_4$, we obtain the algebra ${\bf TP}_{\delta}(0,0,0,1)$;
    \item If $\alpha_2=\alpha_{3}=\alpha_4=0,$ then the algebra simplifies to ${\bf TP}_{\delta}(0,0,0,0)$.
    \end{enumerate}
\end{enumerate}
\end{proof}

Using the Theorems \ref{tdelta=0} and \ref{deltadim4}, we derive the classification four-dimensional complex transposed $\delta$-Poisson algebra associated on $\mu_0^4.$

\begin{theorem} Let $(\mu_0^4, \cdot, [-,-])$ be a transposed $\delta$-Poisson algebra structure defined on the associative  algebra $\mu_0^n$. Then, this algebra is isomorphic to one of the following pairwise non-isomorphic algebras:
\[{\bf TP}_0(1,0,0,0),\ {\bf TP}_{1}(0, 1,\alpha,0), \ {\bf TP}_{1}(0, 1,0, \alpha),\]
\[{\bf TP}_{\delta}(0,1,0,0), \ {\bf TP}_{\delta}(0,0,\alpha,0), {\bf TP}_{\delta}(0,0,0,1), {\bf TP}_{-4}(0,1,0,\alpha),
{\bf TP}_{\frac{3}{2}}(0,0,\alpha,1)\ \alpha\in\mathbb{C}.\]
\end{theorem}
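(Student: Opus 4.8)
The plan is to assemble the four-dimensional classification from the two regimes already treated, using the fact that the scalar $\delta$ is itself an isomorphism invariant. The starting point is the structural description obtained just before Theorem~\ref{deltadim4}: every transposed $\delta$-Poisson structure on $\mu_0^4$ has the form ${\bf TP}(\alpha_1,\alpha_2,\alpha_3,\alpha_4)$ subject to the single extra constraint $\delta\alpha_1=0$. This constraint immediately dichotomizes the argument. When $\delta\neq 0$ we are forced to have $\alpha_1=0$, so the algebra is of type ${\bf TP}_\delta(0,\alpha_2,\alpha_3,\alpha_4)$ and Theorem~\ref{deltadim4} supplies the complete list of representatives, namely ${\bf TP}_{1}(0,1,\alpha,0)$, ${\bf TP}_{1}(0,1,0,\alpha)$, ${\bf TP}_{\delta}(0,1,0,0)$, ${\bf TP}_{\delta}(0,0,\alpha,0)$, ${\bf TP}_{\delta}(0,0,0,1)$, ${\bf TP}_{-4}(0,1,0,\alpha)$, and ${\bf TP}_{\frac{3}{2}}(0,0,\alpha,1)$.

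It remains to treat $\delta=0$, where $\alpha_1$ may be nonzero. Here I would run the reduction used in the proof of Theorem~\ref{tdelta=0} in the case $n=4$, applying the automorphisms of Theorem~\ref{3.1} exactly as in Lemmas~\ref{0lem2}--\ref{0lems}, whose statements already hold for arbitrary $n$. This produces four orbit representatives: ${\bf TP}_0(1,0,0,0)$ when $\alpha_1\neq 0$, and otherwise ${\bf TP}_0(0,1,0,0)$, ${\bf TP}_0(0,0,\alpha,0)$, ${\bf TP}_0(0,0,0,1)$ according to the first nonvanishing coefficient. The observation that keeps the final list short is that the last three of these coincide with the $\delta=0$ specializations of the families ${\bf TP}_\delta(0,1,0,0)$, ${\bf TP}_\delta(0,0,\alpha,0)$, ${\bf TP}_\delta(0,0,0,1)$ already appearing in the $\delta\neq 0$ list: setting $\delta=0$ in the bracket relations of Theorem~\ref{deltadim4} annihilates every term carrying a factor of $\delta$, so the two descriptions agree term by term. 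Consequently the only genuinely new algebra contributed by the $\delta=0$ regime is ${\bf TP}_0(1,0,0,0)$, which cannot occur for $\delta\neq 0$ precisely because a nonzero $\alpha_1$ is forbidden there.

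To finish, I would verify that the displayed algebras are pairwise non-isomorphic. Since the defining identity~(\ref{eq:dualp}) determines $\delta$ intrinsically, two algebras with different values of $\delta$ can never be isomorphic; this separates, for instance, the generic family ${\bf TP}_\delta(0,1,0,0)$ from the sporadic ${\bf TP}_{-4}(0,1,0,\alpha)$ and ${\bf TP}_{\frac{3}{2}}(0,0,\alpha,1)$, which exist only at the indicated exceptional values of $\delta$. Within a fixed $\delta$, non-isomorphism follows from the change-of-basis formulas of Theorem~\ref{M1} (for $\delta\neq 0$) together with the $\delta=0$ substitution rules used in the $n=4$ analysis: these show that the pattern of vanishing among $\alpha_{n-2},\alpha_{n-1},\alpha_n$ is preserved and that the surviving parameter $\alpha$ cannot be scaled away in the remaining families. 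The main obstacle I anticipate is the bookkeeping at the exceptional values $\delta=-4$ and $\delta=\frac{3}{2}$, where one must confirm that the extra free parameter $\alpha$ is a genuine modulus, not removable by a further automorphism, and that these algebras do not silently coincide with members of the generic families at the same $\delta$.
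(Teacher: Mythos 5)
Your proposal follows the paper's own route: the paper proves this theorem in a single line by combining the $\delta=0$ classification (Theorem \ref{tdelta=0}, whose supporting Lemmas \ref{0lem2}--\ref{0lems} do apply verbatim at $n=4$, as you note) with the $\delta\neq 0$ four-dimensional classification of Theorem \ref{deltadim4}, which is exactly your dichotomy via $\delta\alpha_1=0$, and your observation that the $\delta=0$ orbits merge into the $\delta$-indexed families is precisely the implicit content of that one line. One caution on your non-isomorphy argument: the claim that identity (\ref{eq:dualp}) determines $\delta$ intrinsically is false for degenerate members --- for instance ${\bf TP}_{\delta}(0,0,0,1)$ is literally the same algebra for every $\delta$, since all $\delta$-weighted brackets vanish when $\alpha_2=\alpha_3=0$ --- so the ``pairwise non-isomorphic'' assertion must be read within each fixed $\delta$ (as the paper implicitly does), with the cross-$\delta$ separation coming only from families whose structure constants genuinely involve $\delta$.
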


All Poisson algebra structures and the trivial Poisson algebra on null-filiform associative algebras have been obtained in \cite{AFM}. Below, we construct all $\delta$-Poisson algebra structures on null-filiform associative algebras.

\begin{theorem} Let $(\mu_0^n, \cdot, [-,-])$ be a $\delta$-Poisson algebra structure defined on the associative algebra $\mu_0^n$. Then, $(\mu_0^n,\cdot, [-,-])$ is a trivial $\delta$-Poisson algebra. \end{theorem}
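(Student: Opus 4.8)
The plan is to read the defining identity \eqref{eq:LR} as a statement about $\delta$-derivations and then exploit the fact that $\mu_0^n$ is singly generated. For each fixed $x\in\mu_0^n$ the operator $D_x:=[x,-]$ is linear, and \eqref{eq:LR} says exactly that $D_x(y\cdot z)=\delta\big(D_x(y)\cdot z+y\cdot D_x(z)\big)$; that is, $D_x$ is a $\delta$-derivation of the commutative associative algebra $(\mu_0^n,\cdot)$. So it suffices to understand the $\delta$-derivations of $\mu_0^n$ and to show that each relevant $D_x$ vanishes.

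The key structural input is that $\mu_0^n$ is generated by $e_1$: from $e_i\cdot e_j=e_{i+j}$ one has $e_m=e_1\cdot e_{m-1}$ for all $2\le m\le n$. Consequently a $\delta$-derivation $D$ is completely determined by $D(e_1)$, and in particular $D(e_1)=0$ forces $D=0$. Indeed, applying the $\delta$-derivation rule to $e_m=e_1\cdot e_{m-1}$ gives $D(e_m)=\delta\big(D(e_1)\cdot e_{m-1}+e_1\cdot D(e_{m-1})\big)$, so if $D(e_1)=0$ then $D(e_m)=\delta\,e_1\cdot D(e_{m-1})$, and a straightforward induction on $m$ yields $D\equiv 0$. (Here products $e_1\cdot e_t$ with $t>n$ are understood to vanish, which causes no difficulty.)

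With this vanishing criterion in hand the bracket collapses quickly. First take $x=e_1$: since $D_{e_1}(e_1)=[e_1,e_1]=0$ by antisymmetry, the previous step gives $[e_1,e_m]=D_{e_1}(e_m)=0$ for all $m$; the base case is literally $[e_1,e_2]=[e_1,e_1\cdot e_1]=\delta([e_1,e_1]\cdot e_1+e_1\cdot[e_1,e_1])=0$. In particular $[e_1,e_k]=0$, so by antisymmetry $D_{e_k}(e_1)=[e_k,e_1]=-[e_1,e_k]=0$ for every $k$. Applying the same criterion to each $D_{e_k}$ yields $[e_k,e_m]=0$ for all $k,m$, hence $[\mathfrak{L},\mathfrak{L}]=0$. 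Since the associative product is the fixed nonzero null-filiform one ($e_2=e_1\cdot e_1\ne 0$, so $\mathfrak{L}\cdot\mathfrak{L}\ne 0$), triviality must and does come from the vanishing of the bracket, which is precisely the definition of a trivial $\delta$-Poisson algebra.

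I expect no serious obstacle: the whole argument rests on the single observation that $[e_1,-]$ is a $\delta$-derivation annihilating the generator $e_1$, and the uniformity in $\delta$ (including the degenerate case $\delta=0$) is automatic, since the base case $[e_1,e_2]=0$ follows from antisymmetry alone rather than from any special value of $\delta$. The only points requiring care are the bookkeeping of truncated products $e_1\cdot e_t$ for $t>n$ and the clean separation of the induction into the generator step ($D(e_1)$) and the propagation step, both of which are routine.
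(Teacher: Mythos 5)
Your proposal is correct and is essentially the paper's own argument: both rest on the recursion $[e_1,e_{m}]=\delta\, e_1\cdot[e_1,e_{m-1}]$ coming from $e_m=e_1\cdot e_{m-1}$, the vanishing $[e_1,e_1]=0$, and then the same induction killing $[e_i,e_j]$ via $[e_i,e_1]=0$. Your packaging of the identity as ``$D_x=[x,-]$ is a $\delta$-derivation annihilating the generator $e_1$'' is a pleasant conceptual gloss, but the computations are identical to those in the paper.
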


\begin{proof} Let $(\mu_0^n, \cdot, [-,-])$ be a $\delta$-Poisson algebra. To establish the table of multiplications for the operation $[-,-]$ in this Poisson algebra structure, we
consider the following computation for $1\leq i \leq n-1$:
$$[e_1, e_{i+1}] =[e_1, e_1\cdot e_i
] = \delta([e_1, e_1]\cdot e_i + e_1\cdot[e_1, e_i
]) = \delta e_1\cdot [e_1, e_i
].$$
From this we get $[e_1, e_i
] = 0,$ for $2\leq i\leq n.$

Next, considering the following equalities
$$[e_i, e_2] = [e_i, e_1\cdot e_1] = \delta([e_i, e_1]\cdot e_1 + e_1\cdot  [e_i, e_1]) = 0, \  3\leq i\leq n-1,$$
$$[e_i, e_j] = [e_i, e_1\cdot e_{j-1}] = \delta([e_i, e_1]\cdot e_{j-1} + e_1\cdot [e_i, e_{j-1}]) = 0, \ 3\leq i, j\leq n,$$
we obtain
$$[e_i, e_j] = 0,\ 1\leq i, j\leq n.$$\end{proof}

\end{document}